\documentclass{article}
\usepackage[utf8]{inputenc}

\usepackage{authblk}

\usepackage{amsmath}
\usepackage{amsfonts}
\usepackage{mathrsfs}
\usepackage{mathtools}
\usepackage[dvipdfmx]{graphicx}
\usepackage{here}
\usepackage{amsthm}
\usepackage{algorithmic}
\usepackage{algorithm}
\usepackage{url}
\usepackage{stmaryrd}
\usepackage[title]{appendix}
\usepackage{amssymb}
\usepackage[all]{xy}
\usepackage{color}
\usepackage{extarrows}
\usepackage{multirow}
\usepackage{enumitem}

\newtheorem{theorem}{Theorem}[subsection]
\newtheorem{lemma}[theorem]{Lemma}
\newtheorem{proposition}[theorem]{Proposition}

\theoremstyle{definition}
\newtheorem{definition}[theorem]{Definition}
\newtheorem{example}[theorem]{Example}
\newtheorem{remark}[theorem]{Remark}

\theoremstyle{plain}
\newtheorem{theor}{Theorem}

\numberwithin{equation}{subsection}

\title{Explicit construction of a plane sextic model\\ for genus-five Howe curves, I}


\author[1]{Tomoki Moriya}
\author[2]{Momonari Kudo}
\affil[1]{\normalsize School of Computer Science, University of Birmingham}
\affil[2]{{\normalsize Faculty of Information Engineering, Fukuoka Institute of Technology}}

\date{\today}

\begin{document}

%


\maketitle

\begin{abstract}
    In the past several years, {\it Howe curves} have been studied actively in the field of algebraic curves over fields of positive characteristic.
    Here, a Howe curve is defined as the desingularization of the fiber product over a projective line of two hyperelliptic curves.
    In this paper, we construct an explicit plane sextic model for non-hyperelliptic Howe curves of genus five.
    We also determine singularities of our sextic model.
\end{abstract}

\section{Introduction}

Let $k$ be an algebraically closed field of characteristic $p \neq 2$, and $\mathbb{P}^n$ the projective $n$-space over $k$.
A curve means a (possibly singular) projective variety of dimension one over $k$.
As an important class of curves, this paper focuses on a fiber product of hyperelliptic curves.
Such a curve has a decomposable Jacobian (cf.\ \cite[Section 3.3]{glass2005hyperelliptic}), and is used for the construction of curves over finite fields with many rational points with respect to genus (cf.\ \cite{howe2016quickly}, \cite{howe2017curves}).
Howe presented methods to devise the construction for the case of genus $4$ in \cite{howe2016quickly}, and for the case of genera $5$, $6$, and $7$ in \cite{howe2017curves}.
In particular, Howe constructed a genus-$4$ curve as a fiber product of two genus-$1$ double covers sharing precisely one ramified point, and it is named {\it Howe} curve in \cite{kudo2020existence}.
Howe curves are also useful to produce superspecial/supersingular curves (cf.\ \cite{oort1991hyperelliptic}, \cite{kudo2020existence}, \cite{kudo2020algorithms}, \cite{ohashi2022fast}, \cite{moriya2022some}).

In \cite{katsura2021decomposed}, Katsura-Takashima define a {\it generalized Howe curve} as a smooth curve birational to the fiber product of two hyperelliptic curves.
More precisely, let $C_1$ and $C_2$ be hyperelliptic curves of genera $g_1$ and $g_2$ over $k$ with $0<g_1 \leq g_2$, and let $\psi_1 : C_1 \to \mathbb{P}^1$ and $\psi_2 : C_2 \to \mathbb{P}^1$ be usual double covers (hyperelliptic structures).
Suppose also that there is no isomorphism $\varphi$ with $\psi_2 \circ \varphi = \psi_1$.
Then, the normalization $H$ of the fiber product $C_1 \times_{\mathbb{P}^1} C_2$ over these hyperellitic structures is called a generalized Howe curve associated with $C_1$ and $C_2$.
Denoting the genera of $C_1$ and $C_2$ by $g_1$ and $g_2$ respectively, and letting $r$ the number of ramification points in $\mathbb{P}^1$ common to $C_1$ and $C_2$, the genus of $H$ is $g = 2 (g_1+g_2)+1-r$, and $H$ of genus $g\geq 4$ is hyperelliptic if and only if $r=g_1+g_2+1$.
{
Howe curves of genera $3$ and $4$ have been studied, motivated by constructing superspecial or supersingular curves.
For instance, it is proved in \cite{kudo2020existence} that there exist supersingular Howe curves of genus $4$ in arbitrary characteristic $p > 3$.
Also in \cite{kudo2020algorithms}, \cite{ohashi2022fast}, and \cite{moriya2022some}, algorithms for enumerating superspecial Howe curves are proposed in the genus-$4$ non-hyperelliptic case, the genus-$4$ hyperelliptic case, and the genus-$3$ case, respectively.}

In this paper, we study genus-$5$ non-hyperelliptic Howe curves.
Such a curve is constructed from parameter sets $(g_1,g_2,r) = (2,2,4)$ or $(1,1,0)$.
This paper focuses on the former case (cf.\ the latter case will be studied in a separated paper~\cite{MK23-2}), so that we can take $C_1 : y_1^2 = x (x-1)(x-\alpha_1)(x-\alpha_2)(x-\alpha_3)$ and $C_2 : y_2^2 = x (x-1)(x-\alpha_1)(x-\beta_2)(x-\beta_3)$ with mutually distinct $\alpha_1,\alpha_2,\alpha_3,\beta_2, \beta_3 \in k \smallsetminus \{ 0, 1 \}$, see Subsection \ref{subsec:genus5} below.
In general, a non-hyperelliptic curve of genus $5$ is isomorphic to the complete intersection of three quadrics in $\mathbb{P}^4$ by canonical embedding.
Kudo-Harashita proved in \cite{kudo2021parametrizing} that every non-hyperelliptic curve of genus $5$ is birational to a plane sextic singular curve, and provide an algorithm to compute a defining equation of the sextic for a given canonical model in $\mathbb{P}^4$.

A main result of this paper is to determine an equation defining such a sextic curve associated with a non-hyperelliptic Howe curve of genus $5$ defined by the parameter set $(g_1,g_2,r)=(2,2,4)$.
More precisely, we prove the following:

\begin{theor}\label{thm:main1}
    Every non-hyperelliptic Howe curve $H$ of genus five associated with two genus-$2$ curves $C_1$ and $C_2$ is birational to a plane sextic singular curve
    \[
    C: f = \sum_{0< i+j \leq 3} c_{2i,2j}Y^{2i}Z^{2j} = 0
    \]
    with a node $(0,0)$, where each coefficient $c_{2i,2j}$ is written as a polynomial in $\alpha_1, \alpha_2, \alpha_3, \beta_2, \beta_3$ explicitly.
    Moreover, once $\alpha_1, \alpha_2, \alpha_3, \beta_2,\beta_3$ are given, all the $9$ coefficients $c_{2i,2j}$ can be computed in a constant number of arithmetics over a field to which all $\alpha_i$'s and $\beta_j$'s belong.
\end{theor}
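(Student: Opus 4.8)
The plan is to realize $H$ through its three intermediate quotients and the induced Klein four-group action, to write down its canonical model in $\mathbb{P}^4$ by hand, and then to project to $\mathbb{P}^2$ through a carefully chosen bisecant line (as in the Kudo--Harashita recipe of \cite{kudo2021parametrizing}, specialised to this symmetric situation) so that the image is a sextic of the asserted shape. Write $k(H)=k(x,y_1,y_2)$ with $y_1^2=f_1(x)$, $y_2^2=f_2(x)$, and put $h(x)=x(x-1)(x-\alpha_1)$, $h'(x)=(x-1)(x-\alpha_1)$, $p_\alpha(x)=(x-\alpha_2)(x-\alpha_3)$, $p_\beta(x)=(x-\beta_2)(x-\beta_3)$, so $f_1=h\,p_\alpha$ and $f_2=h\,p_\beta$; the involutions $\sigma_i\colon y_i\mapsto-y_i$ generate $V=\langle\sigma_1,\sigma_2\rangle$, with quotients $C_2$, $C_1$ and the elliptic curve $E\colon u^2=p_\alpha p_\beta$, where $u=y_1y_2/h$. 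Pulling back the regular differentials of $C_1,C_2,E$ gives the basis $\omega_1=dx/y_1$, $\omega_2=x\,dx/y_1$, $\omega_3=dx/y_2$, $\omega_4=x\,dx/y_2$, $\omega_5=dx/u$ of $H^0(H,\Omega^1_H)$, on which $V$ acts by signs. Using $x=\omega_2/\omega_1=\omega_4/\omega_3$ together with the identities $\omega_1^2f_1=\omega_3^2f_2=\omega_5^2\,p_\alpha p_\beta$ between quadratic differentials, I would exhibit the canonical curve $\phi_K(H)\subset\mathbb{P}^4$ as the complete intersection of $Q_1=X_1X_4-X_2X_3$ with two further quadrics $Q_2,Q_3$ lying in the span of $X_1^2,X_1X_2,X_2^2,X_3^2,X_3X_4,X_4^2,X_5^2$; concretely $(Q_2,Q_3)$ is a basis of the two-dimensional space of polynomial relations $a_1p_\beta+(a_2x+a_3x^2)p_\beta+(a_4+a_5x+a_6x^2)p_\alpha+a_7h=0$, pure linear algebra in the coefficients of $p_\alpha,p_\beta,h$.

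Next I would project $\phi_K(H)$ from the line $L=\{X_2=X_4=X_5=0\}$ spanned by the images of the two points $P_0',P_0''$ of $H$ lying over $x=0$, obtaining the plane model $[X:Y:Z]=[\omega_5:\omega_2:\omega_4]$, i.e.\ $Y=\omega_2/\omega_5=y_2/h'$ and $Z=\omega_4/\omega_5=y_1/h'$ in the affine chart. The key point is a divisor computation on $H$: since $H\to\mathbb{P}^1_x$ has, over each of $0,1,\alpha_1,\alpha_2,\alpha_3,\beta_2,\beta_3,\infty$, two points each ramified of index $2$, one finds
\[
\operatorname{div}(\omega_2)=2(P_0'{+}P_0'')+\!\!\sum_{a\in\{\beta_2,\beta_3\}}\!\!(P_a'{+}P_a''),\qquad
\operatorname{div}(\omega_4)=2(P_0'{+}P_0'')+\!\!\sum_{a\in\{\alpha_2,\alpha_3\}}\!\!(P_a'{+}P_a''),
\]
\[
\operatorname{div}(\omega_5)=\!\!\sum_{a\in\{0,1,\alpha_1\}}\!\!(P_a'{+}P_a'')+Q_\infty'{+}Q_\infty'' .
\]
Hence $\langle\omega_2,\omega_4,\omega_5\rangle$ has base divisor exactly $P_0'+P_0''$, so the image $C$ has degree $8-2=6$; the map $H\to C$ is birational because $k(Y,Z)=k(x,y_1,y_2)$ (one recovers $x$ from $Y^2,Z^2$, then $y_1,y_2$); and $P_0',P_0''$ both map to $(Y,Z)=(0,0)$. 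A local expansion at $P_0',P_0''$ (parameter $s$ with $x=s^2$), using that $Z/Y=y_1/y_2$ on $H$, shows the two branches there are smooth with tangent lines $Z=\pm\sqrt{\alpha_2\alpha_3/(\beta_2\beta_3)}\,Y$, which are distinct since $p\neq2$, so $(0,0)$ is a node and no other point of $H$ maps to it. Finally $Y^2=s(x):=xp_\beta/h'$ and $Z^2=t(x):=xp_\alpha/h'$ lie in $k(x)$, and (as $\alpha_2,\alpha_3,\beta_2,\beta_3$ are distinct and nonzero) none of $s,t,st$ is a square in $k(x)$, so the induced $(\mathbb{Z}/2)^2$-cover is connected; therefore $C$ equals $\{f(Y,Z)=g(Y^2,Z^2)=0\}$, where $g$ is the unique (up to scalar) equation of the rational plane \emph{cubic} parametrised by $x\mapsto(s(x),t(x))$. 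Since $s(0)=t(0)=0$ this cubic has no constant term, which forces the shape $f=\sum_{0<i+j\le3}c_{2i,2j}Y^{2i}Z^{2j}$, and the tangent direction of the cubic at the origin gives $(c_{2,0}:c_{0,2})=(\alpha_2\alpha_3:-\beta_2\beta_3)$, both nonzero, so the double point at $(0,0)$ is indeed a node.

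It remains to compute $g$ explicitly. Clearing denominators in $s=xp_\beta/h'$ and $t=xp_\alpha/h'$ gives the two monic cubics
\[
A(x)=x^3-(\sigma+s)x^2+(\pi+s\mu)x-s\nu,\qquad
B(x)=x^3-(\tau+t)x^2+(\rho+t\mu)x-t\nu,
\]
where $\mu=1+\alpha_1,\ \nu=\alpha_1,\ \sigma=\beta_2{+}\beta_3,\ \pi=\beta_2\beta_3,\ \tau=\alpha_2{+}\alpha_3,\ \rho=\alpha_2\alpha_3$; their coefficients are affine in $s$, resp.\ $t$, and $A-B$ is quadratic in $x$, so $\operatorname{Res}_x(A,B)$ has total degree $3$ in $(s,t)$. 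A short degree/irreducibility argument — the image curve is not a line, and $\operatorname{Res}_x(A,B)$ vanishes to order one along it — shows that $\operatorname{Res}_x(A,B)$ is a nonzero scalar times $g(s,t)$, with no extraneous factor; expanding and collecting monomials then yields the nine coefficients $c_{2i,2j}$ as explicit polynomials in $\alpha_1,\alpha_2,\alpha_3,\beta_2,\beta_3$ (most cleanly through $\mu,\nu,\sigma,\pi,\tau,\rho$). In particular one checks $c_{0,6}\neq0$ — equivalently that $[0:0:1]$ does not lie on the image cubic — so $C$ is genuinely a sextic. The complexity claim is then immediate: each of the nine $c_{2i,2j}$ is a fixed polynomial of bounded degree in five field elements, and the six symmetric combinations are themselves a constant number of operations away, so all nine coefficients are computed in $O(1)$ additions, subtractions and multiplications over any field containing the $\alpha_i$'s and $\beta_j$'s.

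I expect the delicate part to be the second step: pinning down the centre of projection so that the image is simultaneously a sextic (not a septic or octic) and acquires a genuine node exactly at the origin. This rests on the precise divisor classes of $\omega_2,\omega_4,\omega_5$ on $H$ — hence on correctly counting the points of $H$ over the branch values together with their ramification indices — and on checking that the two local branches at $P_0',P_0''$ are smooth and transverse, together with the nonvanishing of $c_{2,0},c_{0,2}$ and $c_{0,6}$. By comparison, writing down the canonical model and carrying out the final elimination are mechanical once the setup is fixed; the only point needing real care there is the degree count that rules out an extraneous factor in $\operatorname{Res}_x(A,B)$, so that the $c_{2i,2j}$ come out as honest polynomials rather than rational functions.
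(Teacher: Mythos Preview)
Your approach is correct and proves the theorem, but it is genuinely different from the paper's. The paper works directly: it sets $Y=y_1/\phi$ and $Z=y_2/\phi$ with $\phi=x(x-1)(x-\alpha_1)$, takes $f=\mathrm{Res}_x(\phi Y^2-Q_1,\,\phi Z^2-Q_2)$ as a $6\times 6$ Sylvester determinant, reads off the shape and the key coefficients $c_{60},c_{06},c_{20},c_{02}$ as resultants of $\phi,Q_1,Q_2$, and builds the birational inverse by an explicit Euclidean step (Lemmas~\ref{lem:c60}--\ref{lem:proper}); absolute irreducibility is then proved by an extensive case analysis on possible factor shapes (Lemmas~\ref{lem:iired1}--\ref{lem:irred4} and Proposition~\ref{prop:irr}). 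By contrast, you pass through the canonical model in $\mathbb{P}^4$, project from the line through the two points over $x=0$, and obtain a \emph{different} sextic model with coordinates $(y_2/h',\,y_1/h')$, $h'=\phi/x$ --- related to the paper's by the nonlinear change $(Y,Z)\mapsto(xZ,xY)$. What your route buys is a transparent reason for the degree (canonical degree minus the base locus) and a one-line irreducibility proof via Kummer theory (none of $s,t,st$ is a square in $k(x)$), replacing the paper's several pages of casework; what the paper's route buys is that the determinant form makes the nine coefficients, and especially the identities $c_{20}=-c_{02}=-\mathrm{Res}_x(Q_1,Q_2)$ governing the node, immediate. Two small points to tighten in your write-up: the inference ``$A-B$ is quadratic, hence $\mathrm{Res}_x(A,B)$ has total degree $3$'' is not valid as stated --- you need the argument you sketch afterwards (the image cubic is irreducible of degree $3$ and the resultant vanishes to order one along it, or equivalently an asymptotic/degree count showing the $t^3$-coefficient of $\mathrm{Res}_x(A,B)$ is $-\alpha_1 h'(\beta_2)h'(\beta_3)$, independent of $s$); and ``$c_{0,6}\neq0$ is equivalent to $[0{:}0{:}1]$ not lying on the image cubic'' should read $[0{:}1{:}0]$.
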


We also classify singularities of our plane sextic curve $C$ in Theorem \ref{thm:main1}.
Denoting by $\tilde{C}$ the projective closure in $\mathbb{P}^2$ of $C$, we obtain the following:

\begin{theor}
    The projective curve $\tilde{C}$ has exactly $3$ or $5$ singularities, all of which are of multiplicity $2$.
    Moreover, $\tilde{C}$ has $5$ double points generically.
\end{theor}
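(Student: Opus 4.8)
The plan is to exploit the very special shape of $f$. Every monomial of $f$ has even degree in $Y$ and in $Z$, so the degree-$6$ homogenization $F\in k[X,Y,Z]$ of $f$ is again a polynomial in $X^{2},Y^{2},Z^{2}$: write $F=\Phi(X^{2},Y^{2},Z^{2})$ for a ternary cubic form $\Phi\in k[x,y,z]$, which, because every monomial of $f$ has $0<i+j$, carries no $x^{3}$ term. Hence $\tilde{C}=\pi^{-1}(\tilde{D})$, where $\tilde{D}=V(\Phi)\subset\mathbb{P}^{2}$ is a plane cubic and $\pi\colon\mathbb{P}^{2}\to\mathbb{P}^{2}$, $[X:Y:Z]\mapsto[X^{2}:Y^{2}:Z^{2}]$, is the degree-$4$ quotient morphism by $(\mathbb{Z}/2\mathbb{Z})^{2}$ acting by sign changes on the coordinates, whose branch locus is the triangle $V(xyz)$. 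Since $\nabla F=\bigl(2X\,\Phi_{x}(X^{2},Y^{2},Z^{2}),\,2Y\,\Phi_{y}(X^{2},Y^{2},Z^{2}),\,2Z\,\Phi_{z}(X^{2},Y^{2},Z^{2})\bigr)$ and $\operatorname{Sing}(\tilde{C})=\tilde{C}\cap V(\nabla F)$, a point of $\tilde{C}$ is singular exactly when, for each of the three coordinates, that coordinate vanishes or the matching partial of $\Phi$ vanishes at the $\pi$-image. The whole problem is thereby reduced to the geometry of the cubic $\tilde{D}$ and its position relative to $V(xyz)$.

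Next I would pin down $\tilde{D}$. Since $\tilde{C}$ is birational to the irreducible genus-five curve $H$, it is reduced and irreducible, hence so is $\tilde{D}$; and $\tilde{D}$ is not smooth, because a smooth plane cubic has genus $1$ and then Riemann--Hurwitz for the degree-$4$ cover $\tilde{C}\to\tilde{D}$, ramified only over the finitely many points of $\tilde{D}\cap V(xyz)$, is incompatible with $g(H)=5$ (this is also visible directly from the explicit $\Phi$). So $\tilde{D}$ is an irreducible cubic with a single singular point $S$, a node or a cusp, and a node for generic parameters. Furthermore $S$ is not a vertex of $V(xyz)$: it is not $P_{0}:=[1:0:0]$, the point corresponding to $(Y,Z)=(0,0)$, since that point is a node of $\tilde{C}$ by Theorem~\ref{thm:main1}, which forces $c_{2,0}\ne 0$ and $c_{0,2}\ne 0$ and hence $\tilde{D}$ smooth at $P_{0}$; and it is not $[0:1:0]$ or $[0:0:1]$ because, as the explicit formulas will show, $c_{6,0}c_{0,6}\ne 0$, so those two points do not lie on $\tilde{D}$. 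Reading $\operatorname{Sing}(\tilde{C})$ off the first paragraph then gives: (a) $P_{0}\in\tilde{C}$ always (no $x^{3}$ term) and is a node; (b) if $S\notin V(xyz)$, then $\pi$ is \'etale over $S$, so $\pi^{-1}(S)$ is four distinct points at each of which $\tilde{C}$ has the same singularity type as $\tilde{D}$ at $S$ --- four nodes generically; (c) if $S$ lies on exactly one coordinate line, then $\pi^{-1}(S)$ is two points, and --- when the two branches of $\tilde{D}$ at $S$ are transverse to that line, as is generic --- a local computation shows each of these points is a tacnode (the two branches pull back to smooth branches with a common tangent); (d) a point where $\tilde{D}$ is smooth but meets a coordinate line tangentially would contribute two further nodes of $\tilde{C}$, but for a generic cubic no such point exists.

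It remains to decide which of (b)--(d) occurs, and this, carried out with the explicit formulas of Theorem~\ref{thm:main1}, is the heart of the argument. Substituting those formulas, I would compute $S$ and show that outside a proper Zariski-closed subset $Z_{0}$ of the parameter space of $(\alpha_{1},\alpha_{2},\alpha_{3},\beta_{2},\beta_{3})$ one has $S\notin V(xyz)$, $\tilde{D}$ tangent to no coordinate line, and $c_{6,0}c_{0,6}\ne 0$; then (a) and (b) yield exactly $1+4=5$ nodes, which is the ``$5$ double points generically'' assertion. On $Z_{0}$ one must check, again from the explicit formulas, that the sole degeneration is $S$ sliding onto one coordinate line (still not a vertex, still no tangency elsewhere, still $c_{6,0}c_{0,6}\ne 0$); then (a) and (c) yield exactly $1+2=3$ singular points, one node and two tacnodes, all of multiplicity $2$. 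The genus bookkeeping serves as a built-in check: $\sum_{P}\delta_{P}=p_{a}(\tilde{C})-g(H)=\binom{5}{2}-5=5$, and both configurations --- five nodes ($\delta=1+1+1+1+1$) and a node plus two tacnodes ($\delta=1+2+2$) --- exhaust this drop exactly, so there is no room for further singularities, nor (given $S\ne P_{0}$ and $[0:1:0],[0:0:1]\notin\tilde{D}$) for any point of multiplicity $\ge 3$. The main obstacle is precisely the analysis on $Z_{0}$: one has to prove that the algebraic relations defining the Howe-curve coefficients $c_{2i,2j}$ force $\tilde{D}$ to degenerate in exactly this one controlled way, excluding for instance two of the four generic nodes colliding into a higher singularity or a coordinate-line tangency appearing at the same time. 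I expect this to require a computer-algebra computation, and $Z_{0}$ to be the zero locus of a single discriminant-type polynomial in $\alpha_{1},\alpha_{2},\alpha_{3},\beta_{2},\beta_{3}$ --- which is what makes the clean dichotomy ``exactly $3$ or $5$'' possible.
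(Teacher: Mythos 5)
Your reduction of the problem to the plane cubic $\tilde{D}=V(\Phi)$ under the bidouble cover $\pi$ is a genuinely different (and attractive) framing from the paper, which instead works directly with $F_Y,F_Z,F_X$, the genus bound \eqref{inequality:genus-multiplicity}, and explicit identities among the coefficients $c_{ij}$. However, as written your argument has two real gaps. First, the claim that a \emph{smooth} $\tilde{D}$ is ruled out by Riemann--Hurwitz is not correct: if $\tilde{D}$ were smooth (genus $1$) but simply tangent to the two lines $\{y=0\}$ and $\{z=0\}$ at non-vertex points and transverse to $\{x=0\}$, then the induced degree-$4$ map $H\to\tilde{D}$ would have ramification degree $2+6+0+0=8$, exactly matching $2g(H)-2=8$, and the pullback would be a curve $\tilde{C}$ with five nodes ($P_0$ plus two over each tangency); so the numerics alone do not exclude this configuration. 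What actually excludes it is precisely the computational content of the paper: singular points of $\tilde{C}$ of the two types $(y:0:1)$ and $(0:z:1)$ cannot coexist (Lemma \ref{lem:3sing2} via Lemma \ref{lem:p1p2}\,(1), i.e.\ at least one of $c_{40}^2-4c_{60}c_{20}$ and $c_{04}^2+4c_{20}c_{06}$ is nonzero), and when such a point exists it is not a node but needs two blowups (Lemma \ref{lem:sing infinity zero}, resting on the divisibility of $4c_{20}c_{42}-2c_{22}c_{40}-c_{40}^2$ by $p_1$, resp.\ of $c_{04}c_{06}+2c_{06}c_{22}-c_{04}c_{24}$ by $p_2$). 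Your parenthetical ``also visible directly from the explicit $\Phi$'' is exactly the step that cannot be waved away.

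Second, and more broadly, the part you yourself call ``the heart of the argument'' --- showing that on the degeneration locus $Z_0$ the only possibility is the node $S$ of $\tilde{D}$ sliding onto one coordinate line with both branches transverse to it (no vertex, no simultaneous tangency, no collision into a worse singularity, no cusp in a forbidden position) --- is only described, not carried out; you defer it to a computer-algebra computation you have not done. But this case analysis \emph{is} the theorem: it is what yields the dichotomy ``exactly $3$ or exactly $5$'', all of multiplicity $2$, and it is exactly what the paper establishes through Lemmas \ref{lem:001}, \ref{lem:3sing1}, \ref{lem:3sing2}, \ref{lem:sing infinity zero}, \ref{lem:sing infinity zero2} using the explicit formulas for the $c_{ij}$ in terms of $\alpha_i,\beta_j$. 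Your genus bookkeeping ($\sum_P\delta_P=5$) and the $(\mathbb{Z}/2)^2$-symmetry are correct and do usefully exclude multiplicity $\ge 3$ once $c_{60}c_{06}\ne 0$ and the node at $P_0$ are known, so the dictionary you set up could in principle support a complete alternative proof; but until the coefficient-level verifications replacing Lemmas \ref{lem:p1p2}, \ref{lem:3sing2} and \ref{lem:sing infinity zero} are actually supplied, the proposal is a strategy outline rather than a proof.
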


Thanks to our explicit equation together with obtained information of singularities, one can analyze non-hyperelliptic Howe curves of genus $5$ as singular plane curves.
For instance, one can determine the isomorphy of given two such curves, by computing their function fields.
This could also derive applications to enumerating curves defined over finite fields, e.g., superspecial curves.





\subsection*{Acknowledgements}
The authors thank Kazuhiro Yokoyama for helpful comments on proving the irreducibility of parametric polynomials and on analyzing singularities of our plane sextic curve.
This work was supported by JSPS Grant-in-Aid for Young Scientists 20K14301 and 23K12949 and EPSRC through grant EP/V011324/1.

\section{Preliminaries}\label{sec:pre}

In this section, we first briefly review some known facts on generalized Howe curves and curves of genus five.
Subsequently, we also show some properties of factors of a bivariate sextic polynomial, which will be used in Subsection \ref{subsec:irr} below.
Throughout this section, let $k$ be an algebraically closed field of characteristic $p \geq 0$ with $p \neq 2$.

\subsection{Generalized Howe curves}\label{subsec:generalHowe}

In \cite{katsura2021decomposed}, Katsura-Takashima define a generalized Howe curve as follows:
Let $C_1$ and $C_2$ be hyperelliptic curves of genera $g_1$ and $g_2$ over $k$ with $0<g_1 \leq g_2$.
Explicitly, we can write
\[
\begin{aligned}
    C_1 : y_1^2 = \phi_1(x) = (x-\alpha_1) \cdots (x- \alpha_r) (x-\alpha_{r+1}) \cdots (x- \alpha_{2g_1+2}),\\
    C_2 : y_2^2 =\phi_2 (x)= (x-\alpha_1) \cdots (x- \alpha_r) (x-\beta_{r+1}) \cdots (x- \beta_{2g_2+2}),
\end{aligned}
\]
for pairwise distinct $\alpha_i, \beta_j \in k$ with $1 \leq i \leq 2g_1+2$ and $1 \leq j \leq 2g_2+2$.
We can take $\alpha_i$ or $\beta_j$ to be $\infty$; in this case, we remove the factors $x-\alpha_i$ or $x-\beta_j$ from the equations for $C_1$ and $C_2$.
Let $\psi_1: C_1 \to \mathbb{P}^1$ and $\psi_2 : C_2 \to \mathbb{P}^1$ be the hyperelliptic structures, and assume that there is no isomorphism $\varphi$ with $\psi_2 \circ \varphi = \psi_1$.
In this case, the fiber product $C_1 \times_{\mathbb{P}^1} C_2$ over $\mathbb{P}^1$ is irreducible, and we have $r \leq g_1+g_2+1$.
Then, the normalization $H$ of the curve $C_1 \times_{\mathbb{P}^1} C_2$ is called a {\it generalized Howe curve} associated with $C_1$ and $C_2$.
Note that a defining equation for $H$ is locally given as $y_1^2 = \phi_1(x)$ and $y_2^2=\phi_2(x)$.
The genus $g(H)$ of $H$ is equal to $2(g_1+g_2) +1 - r$, see \cite[Proposition 1]{katsura2021decomposed}.
The hyperelliptic involutions of $C_1$ and $C_2$ lift to order-$2$ automorphisms $\sigma_1$ and $\sigma_2$ of $H$, and the quotient curve $C_3 := H/ \langle \sigma_1 \sigma_2 \rangle$ is given by
\[
C_3 : y_3^2 = \phi_3(x) = (x-\alpha_{r+1}) \cdots (x- \alpha_{2g_1+2}) (x-\beta_{r+1}) \cdots (x- \beta_{2g_2+2}),
\]
which has genus $g_3 = g_1+g_2+1-r$.
Katsura-Takashima also proved that, if $g(H) \geq 4$, then $H$ is hyperelliptic if and only if $r = g_1+g_2+1$, i.e., $C_3 \cong \mathbb{P}^1$.

\subsection{Defining equations for curves of genus five}\label{subsec:fact}

In general, curves of genus $5$ are classified first by the gonality into hyperelliptic, non-hyperelliptic and trigonal, and non-hyperelliptic and non-trigonal, where we used a fact that any hyperelliptic curve of genus $\geq 3$ cannot be trigonal.
A hyperelliptic curve of genus $5$ over $k$ is given by $y^2 = \phi(x)$ for a separable monic univariate polynomial $\phi(x) \in k[x]$ of degree $11$ with $9$ possibly non-zero coefficients, and a non-hyperelliptic curve of genus $5$ is canonically embedded into $\mathbb{P}^4$ as the complete intersection of $3$ quadratic surfaces.

Among non-hyperelliptic curves of genus $5$, every trigonal one is isomorphic to the normalization of a plane quintic with a unique singularity, and its explicit defining equation (in a reduced form) is provided in \cite[Sections 2 and 3]{kudo2022superspecial} for each type of the singularity; non-split node, split node, and cusp. 
As for non-hyperelliptic and non-trigonal ones, Kudo-Harashita proved in \cite{kudo2021parametrizing} that any such a curve $C$ of genus $5$ is birational to a plane singular sextic curve with only double points, and provided a method to explicitly construct such a sextic for a given canonical model of $C$.
More precisely, assuming $C$ is given as the complete intersection $Q_1 = Q_2 = Q_3 = 0$ in $\mathbb{P}^4$ for quadratic forms $Q_i$'s over $k$ in $5$ variables, one can compute a bivariate sextic polynomial $f(Y,Z)$ such that the projective closure in $\mathbb{P}^2$ of $f(Y,Z) = 0$ is birational to $C$.
A genus-$5$ non-hyperelliptic Howe curve $H$ of course has such a sextic model, but this cannot be constructed in a way similar to \cite{kudo2021parametrizing}, since $H$ is given as a fiber product, not as a canonical model.

\if 0

Let $f(Y,Z)$ be a bivariate sextic polynomial of the form 
\[
f=b_{60}Y^6 + b_{42} Y^4Z^2  + b_{40} Y^4  + b_{24}Y^2 Z^4 + b_{22}Y^2 Z^2 + b_{20}Y^2  + b_{06}Z^6 + b_{04} Z^4 + b_{02}Z^2
\]
in the variables $Y$ and $Z$ over $k$ with $b_{60} \neq 0$, and assume that $f$ is factored into $f = r H_1 H_2$ with $r=b_{60}$ for some $H_1,H_2 \in k[Y,Z]$ with $1 \leq \mathrm{deg}H_1, \mathrm{deg} H_2 \leq 5$ which are monic in $Y$.
It is straightforward from the form of $f$ that we may suppose $(\mathrm{deg}H_1, \mathrm{deg} H_2) = (2,4)$ or $(3,3)$.
In the following two lemmas, we shall determine possible factorization patterns of $f$ for each of the two cases.

\begin{lemma}\label{lem:factor}
With notation as above, we assume that $b_{20} = - b_{02}$, and that $b_{60}$, $b_{20}$, $b_{06}$, and $b_{02}$ are all non-zero.
If $(\mathrm{deg}H_1, \mathrm{deg} H_2) = (2,4)$, then we have the following five cases:
\begin{enumerate}
    \item[{\bf (A1)}] For $s,t,u,v,w\in k$ with $u,w \neq 0$,
    \[
    \begin{cases}
    H_1 = Y^2 - Z^2,\\
    H_2 = Y^4 + (s Z^2 + t) Y^2 + (u Z^4 + v Z^2 + w),
    \end{cases}
    \]
    whence
    \begin{eqnarray*}
        H_1H_2 \!\!\!&=& \!\!\!Y^6 + (s -1) Y^4 Z^2 +  t Y^4 + (-s + u) Y^2 Z^4 \\
        && + (-t + v) Y^2 Z^2 + w Y^2  - u  Z^6 -v Z^4 - w Z^2.
    \end{eqnarray*} 
    
    \item[{\bf (A2)}] For $s,t,u,v,w \in k$ with $s,t,v,w \neq 0$,
    \[
    \begin{cases}
        H_1 = Y^2 + (s Z^2 + t),\\
        H_2 = Y^4 + (u Z^2 + v)Y^2 + (w Z^4 - v Z^2),
    \end{cases}
    \]
    whence
    \begin{eqnarray*}
        H_1H_2 \!\!\!&=& \!\!\!Y^6 + (s + u) Y^4 Z^2 + (t + v) Y^4 + (s u + w) Y^2 Z^4 \\
        && + (sv + t u - v) Y^2 Z^2 + t v Y^2 + s w  Z^6  + (-s v + t w) Z^4 - t v Z^2.
    \end{eqnarray*} 

    \item[{\bf (A3)}] For $s,t,u \in k \smallsetminus \{ 0 \}$,
    \[
    \begin{cases}
        H_1 = Y^2 + (s Z^2 + t Z + u),\\
        H_2 = Y^4 + ((s-1)Z^2 - t Z + u)Y^2 + (-s Z^4 + t Z^3 - u Z^2),
    \end{cases}  
    \]
    whence
    \begin{eqnarray*}
            H_1 H_2 \!\!\!\!&=& \!\!\!Y^6 + (2s -1) Y^4 Z^2 + 2 u Y^4 + (s^2 - 2s) Y^2 Z^4 \\
            \!\!\!&&\!\!\!+ (2 s u - t^2 - 2 u)Y^2 Z^2 + u^2 Y^2 - s^2 Z^6 + (-2 s u + t^2) Z^4 - u^2 Z^2.\!
    \end{eqnarray*} 

    \item[{\bf (A4)}] For $s,t,u, \in k \smallsetminus \{ 0 \}$,
    \[
    \begin{cases}
        H_2 = Y^2 + s ZY + (t Z^2 + u),\\
        H_3 = Y^4 -s ZY^3  + ((t-1)Z^2 + u) Y^2 + s Z^3Y + (-t Z^4 - u Z^2),
    \end{cases}
    \]
    whence
    \begin{eqnarray*}
            H_1 H_2 &=& Y^6 + (-s^2 + 2 t -1) Y^4 Z^2 + 2 u Y^4 + (s^2 + t^2 - 2 t) Y^2 Z^4 \\
            &&+ (2 t u - 2 u)Y^2 Z^2 + u^2 Y^2 - t^2 Z^6 -2 t u  Z^4 - u^2 Z^2.
    \end{eqnarray*} 

    \item[{\bf (A5)}] For $s,t,u \in k \smallsetminus \{ 0 \}$,
    \[
    \begin{cases}
        H_1 = Y^2 + s Y + (t Z^2 + u),\\
        H_2 = Y^4 - s Y^3 + ((t-1)Z^2 +u)Y^2 + (s Z^2) Y + (-t Z^4 - u Z^2),
    \end{cases}
    \]
    whence
    \begin{eqnarray*}
        H_1H_2 &=& Y^6 + (2 t -1) Y^4 Z^2  + (-s^2 + 2u)Y^4 + (t^2-2t) Y^2 Z^4 \\
        && + (s^2 + 2 t u -2 u) Y^2 Z^2 + u^2 Y^2 - t^2 Z^6 - 2 t u Z^4 - u^2 Z^2.
    \end{eqnarray*} 

    \item[{\bf (A6)}] For $s,t \in k \smallsetminus \{ 0 \}$,
        \[
             \left\{
             \begin{aligned}
             H_1 =& Y^2 + (\varepsilon (s+1) Z + t) Y  + (sZ^2 + \varepsilon tZ ),\\
             H_2 =& Y^4 + (-\varepsilon (s+1) Z -t) Y^3 \\
             & + ((s-s^2) Z^2 + \varepsilon t(2s+1) Z - t^2) Y^2\\
             & + ( \varepsilon s^2(s+1) Z^3 -st(s+2) Z^2 -\varepsilon t^2 (s-1) Z + t^3) Y\\
             & + (-s^3 Z^4 + \varepsilon s^2 t Z^3 + s t^2 Z^2 - \varepsilon t^3 Z)
            \end{aligned}
            \right.
        \]
    with $\varepsilon=\pm 1$ whence
    \begin{eqnarray*}
        H_1H_2 &=& Y^6 + (-2s^2 - 1)Y^4Z^2 - 2 t^2 Y^4 + (s^4 + 2s^2)Y^2Z^4\\
        &&+(-2 t^2 s^2 + 2t^2)Y^2Z^2 + t^4Y^2 - s^4Z^6 + 2 t^2 s^2Z^4 - t^4Z^2
    \end{eqnarray*}
\end{enumerate}
\end{lemma}

\begin{lemma}\label{lem:factor2}
With notation as above, we assume that $b_{20} = - b_{02}$, and that $b_{60}$, $b_{20}$, $b_{06}$, and $b_{02}$ are all non-zero.
If $(\mathrm{deg}H_1, \mathrm{deg}H_2) = (3,3)$, then we have the following six cases:
\begin{enumerate}
    \item[{\bf (B1)}] For $s,t \in k \smallsetminus \{ 0 \}$,
    \[
    \left\{
    \begin{aligned}
    H_1=&Y^3 + (sZ + t)Y^2 -Z^2Y + (-sZ^3 - tZ^2),\\
    H_2=&Y^3 + (-sZ - t)Y^2 + (-s^2Z^2 + 2s tZ - t^2)Y\\
    &+ (s^3 Z^3 - s^2 t Z^2 - s t^2Z + t^3),
    \end{aligned}
    \right.
    \]
    whence
    \begin{eqnarray*}
            H_1H_2 &=& Y^6 + (-2s^2 - 1)Y^4Z^2 - 2t^2Y^4 + (s^4 + 2s^2)Y^2Z^4 \\
            &&+ (-2s^2t^2 + 2t^2)Y^2Z^2 + t^4Y^2 - s^4Z^6 + 2s^2t^2Z^4 - t^4Z^2.
    \end{eqnarray*}

    \item[{\bf (B2)}] For $s,t,u,v\in k \smallsetminus \{ 0 \}$,
    \[
        \left\{
        \begin{aligned}
           H_1=&Y^3 +ZY^2 + (sZ^2 + t )Y + (sZ^3 + tZ),\\
           H_2=&Y^3 -ZY^2 + (uZ^2  + v)Y - (u Z^3 + v Z),
        \end{aligned}
        \right.
    \]
    whence
    \begin{eqnarray*}
            H_1 H_2 &=& Y^6 + (s+u -1) Y^4 Z^2 + (t + v) Y^4 + (s u -s-u) Y^2 Z^4 \\
            &&+ (s v + t u - t - v )Y^2 Z^2 + tv Y^2\\
            &&- su Z^6 +(-sv-tu)  Z^4 - tv Z^2.
    \end{eqnarray*}
    
    \item[{\bf (B3)}] For $s,t,u\in k \smallsetminus \{ 0 \}$,
     \[
     \left\{
     \begin{aligned}
     H_1=&Y^3 + (\varepsilon Z + s)Y^2 + (t Z^2 + \varepsilon s Z + u )Y + \varepsilon (tZ^3 + uZ),\\
     H_2=&Y^3 + (-\varepsilon Z - s)Y^2 + (tZ^2 + \varepsilon sZ  + u)Y + \varepsilon (-t Z^3 - u Z)
     \end{aligned}
     \right.
    \]
    with $\varepsilon=\pm 1$, whence
    \begin{eqnarray*}
    H_1 H_2 &=& Y^6 + (2 t -1) Y^4 Z^2 + (-s^2 + 2u) Y^4 + (t^2- 2 t) Y^2 Z^4 \\
    &&+ (s^2 + 2 t u - 2u )Y^2 Z^2 + u^2 Y^2 - t^2 Z^6 -2 t u  Z^4 - u^2 Z^2.
    \end{eqnarray*}
  
    \item[{\bf (B4)}] For $s,t,u\in k \smallsetminus \{ 0 \}$,
     \[
    \begin{cases}
        H_1 = Y^3 + \varepsilon ZY^2 + (s Z^2 + t Z + u) Y + \varepsilon ( s Z^3 + tZ^2 + u Z),\\
        H_2 = Y^3 - \varepsilon ZY^2 + (s Z^2 - t Z + u) Y - \varepsilon ( s Z^3 - tZ^2 + u Z)\\
    \end{cases}
    \]
    with $\varepsilon=\pm 1$, whence
    \begin{eqnarray*}
            H_1 H_2 &=& Y^6 + (2 s - 1) Y^4 Z^2 + 2 u Y^4 + (s^2 - 2s) Y^2 Z^4 \\
            &&+ (2 s u - t^2 - 2 u )Y^2 Z^2 + u^2 Y^2 \\
            &&- s^2 Z^6 +(-2su+t^2) Z^4 - u^2 Z^2.
    \end{eqnarray*}

     \item[{\bf (B5)}] For $s,t, u \in k \smallsetminus \{ 0 \}$,
    \begin{eqnarray*}
        H_1 &=& Y^3 +Z Y^2 + (sZ^2 + t)Y+ (s Z^3  + t Z),\\
        H_2 &=& Y^3 - Z Y^2 + (uZ^2 + t)Y+ (-u Z^3 - t Z),
    \end{eqnarray*}
     whence
    \begin{eqnarray*}
            H_1 H_2 &=& Y^6 + (s + u -1) Y^4 Z^2 + 2t Y^4 + (s u - s - u) Y^2 Z^4 \\
            &&+ (s t + t u - 2t )Y^2 Z^2 + t^2 Y^2 - su Z^6 +(-st-tu)  Z^4 - t^2 Z^2.
    \end{eqnarray*}

     \item[{\bf (B6)}] For $s,t,u,v \in k$ with $u,v \neq 0$,
     \[
    \begin{cases}
        H_1 = Y^3 + (sZ)Y^2 + (t Z^2 +u) Y + (v Z^3 + \varepsilon u Z),\\
        H_2 =  Y^3 - (sZ)Y^2 + (t Z^2 +u) Y - (v Z^3 + \varepsilon u Z)\\
    \end{cases}
    \]
    with $\varepsilon=\pm 1$, whence
    \begin{eqnarray*}
            H_1 H_2 &=& Y^6 + (-s^2 + 2t) Y^4 Z^2 + 2u Y^4 + (-2sv + t^2) Y^2 Z^4 \\
            &&+ (-2\varepsilon su + 2tu) Y^2 Z^2 + u^2 Y^2 - v^2 Z^6 -2 \varepsilon u v Z^4 - u^2 Z^2.
    \end{eqnarray*}
    
\end{enumerate}
\end{lemma}
\fi

\subsection{Some properties of factors of a bivariate sextic}

Let $f(Y,Z)$ be a bivariate sextic polynomial of the form 
\[
f=b_{60}Y^6 + b_{42} Y^4Z^2  + b_{40} Y^4  + b_{24}Y^2 Z^4 + b_{22}Y^2 Z^2 + b_{20}Y^2  + b_{06}Z^6 + b_{04} Z^4 + b_{02}Z^2
\]
in the variables $Y$ and $Z$ over $k$.
Assume $b_{60},b_{06},b_{20},b_{02} \neq 0$ and $b_{20} = - b_{02}$.
Since $f$ is a polynomial in $Y^2$ and $Z^2$, it satisfies the following:
\begin{itemize}
    \item[$(\ast)$] If $f$ is divisible by a polynomial $H(Y,Z)$ in $k[Y,Z]$, then it is also divisible by $H(-Y,Z)$, $H(Y,-Z)$, and $H(-Y,-Z)$. 
\end{itemize} 
In this subsection, we prove some properties of factors of $f$, by using $(\ast)$.

\begin{lemma}\label{lem:iired1}
    With notation as above, if $f$ has a linear factor, then it is divisible by $Y^2 - Z^2$.
    In this case, we have the following system of linear equations:
    \begin{equation}\label{eq:linear1}
            \left\{
    \begin{aligned}
    b_{60} + b_{42}+ b_{24} + b_{06} = 0,\\
    b_{40} + b_{22} + b_{04} = 0.
    \end{aligned}
    \right.
    \end{equation}
\end{lemma}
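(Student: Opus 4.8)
The plan is to exploit the property $(\ast)$ together with the hypothesis $b_{20}=-b_{02}\neq 0$. Write $f(Y,Z)=g(Y^{2},Z^{2})$ with
\[
g(U,V)=b_{60}U^{3}+b_{42}U^{2}V+b_{40}U^{2}+b_{24}UV^{2}+b_{22}UV+b_{20}U+b_{06}V^{3}+b_{04}V^{2}+b_{02}V .
\]
Since $f$ and any divisor of it lying in $k[Y^{2},Z^{2}]$ are invariant under the four sign changes, so is the quotient; hence divisibility of $f$ by an element of $k[Y^{2},Z^{2}]$ is equivalent to the corresponding divisibility of $g$ in $k[U,V]$. In particular $Y^{2}-Z^{2}\mid f$ iff $U-V\mid g$, i.e. iff $g(V,V)\equiv 0$; and the coefficients of $V^{3},V^{2},V$ in $g(V,V)$ are $b_{60}+b_{42}+b_{24}+b_{06}$, $b_{40}+b_{22}+b_{04}$, and $b_{20}+b_{02}=0$. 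Thus once $Y^{2}-Z^{2}\mid f$ is known the system \eqref{eq:linear1} follows for free, and it suffices to show that any linear factor of $f$ forces $U-V\mid g$.

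First I would normalize. Let $L=aY+bZ+c$ with $(a,b)\neq(0,0)$ be a linear factor of $f$. If $a=0$, then $b\neq 0$, and $c\neq 0$ because $f(Y,0)=Y^{2}(b_{60}Y^{4}+b_{40}Y^{2}+b_{20})\neq 0$ (as $b_{60}\neq 0$); but then $(\ast)$ gives $bZ-c\mid f$, hence $b^{2}Z^{2}-c^{2}\mid f$, which would make $f(Y,c/b)$ the zero polynomial in $Y$ — impossible, since it has degree $6$ with leading coefficient $b_{60}\neq 0$. Symmetrically, using $b_{06}\neq 0$ and $f(0,Z)$, the case $b=0$ is excluded. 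So we may assume $a=1$ and $b\neq 0$.

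If $c=0$, then $(\ast)$ gives $Y-bZ\mid f$ as well, so $Y^{2}-b^{2}Z^{2}\mid f$ and in particular $f(bZ,Z)\equiv 0$; the coefficient of $Z^{2}$ here is $b_{20}b^{2}+b_{02}=b_{20}(b^{2}-1)$, which forces $b^{2}=1$ since $b_{20}\neq 0$, whence $Y^{2}-Z^{2}\mid f$. The case $c\neq 0$ is the crux. Here the four lines $\pm Y\pm bZ+c$ are pairwise non-proportional (because $b,c\neq 0$), so by $(\ast)$ their product $P$ divides $f$; one computes $P=\tilde P(Y^{2},Z^{2})$, where $\tilde P(U,V)=U^{2}-2(b^{2}V+c^{2})U+\bigl((b^{2}V+c^{2})^{2}-4b^{2}c^{2}V\bigr)$ is monic of degree $2$ in $U$ with $U$-discriminant $16b^{2}c^{2}V$, a nonzero polynomial of odd degree in $V$, hence not a square in $k[V]$; therefore $\tilde P$ is irreducible in $k[U,V]$. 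Consequently $\tilde P\mid g$ with a linear cofactor $\ell=b_{60}U+dV+h$; comparing the constant term of $g=\tilde P\,\ell$ gives $c^{4}h=0$, so $h=0$, and comparing the coefficients of $U$ and $V$ gives $b_{20}=c^{4}b_{60}$ and $b_{02}=c^{4}d$, so $b_{20}=-b_{02}$ forces $d=-b_{60}$. Thus $\ell=b_{60}(U-V)$, i.e. $U-V\mid g$ and $Y^{2}-Z^{2}\mid f$.

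The step I expect to be the main obstacle is the case $c\neq 0$: one has to see that the degree-$4$ product of the four conjugate lines descends, in the coordinates $(U,V)=(Y^{2},Z^{2})$, to an \emph{irreducible} quadratic $\tilde P$ — this is exactly where the $U$-discriminant being a non-square in $k[V]$ is used, and where $\mathrm{char}\,k\neq 2$ and $b,c\neq 0$ matter — and then to use the relation $b_{20}=-b_{02}$ to pin the cofactor $\ell$ down to a scalar multiple of $U-V$. The remaining cases ($a=0$, $b=0$, $c=0$) are short computations relying only on $b_{60},b_{06},b_{20}$ being nonzero.
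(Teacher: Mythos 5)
Your proof is correct. Its overall skeleton matches the paper's: both start from a linear factor, use $(\ast)$ to produce the four sign-conjugate lines, and split into a degenerate case (your $c=0$; in the paper, the case where two conjugates coincide, i.e.\ $a_2=0$, where $b_{20}\neq 0$ forces the slope to satisfy $b^2=1$) and the generic case where the four conjugates are pairwise non-associate and their degree-$4$ product divides $f$. Where you genuinely diverge is in how the residual quadratic cofactor is identified. The paper stays in $k[Y,Z]$ and runs a subcase analysis on the factorization of the cofactor $Q$ (irreducible versus a product of two lines, and which factors are exchanged by the sign changes), using the vanishing constant term, $b_{06}\neq 0$, $b_{20}\neq 0$ and $b_{20}=-b_{02}$ to force $Q=Y^2-Z^2$. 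You instead descend to $k[U,V]$ via $U=Y^2$, $V=Z^2$ (justified by your invariance-of-the-quotient remark), write $g=\tilde P\,\ell$ with $\ell$ linear, and pin down $\ell=b_{60}(U-V)$ by comparing the constant, $U$- and $V$-coefficients together with $b_{20}=-b_{02}$; the system \eqref{eq:linear1} then falls out uniformly from $g(V,V)\equiv 0$. Your route buys a shorter, more mechanical identification of the cofactor and avoids the paper's case distinctions on $Q$; the paper's route avoids the change of variables but pays with that case analysis. One small remark: the irreducibility of $\tilde P$ (the discriminant step you flag as the crux) is not actually needed, since $\tilde P\mid g$ already follows from $P\mid f$ by your descent principle and the cofactor is linear by a degree count, so that step can be dropped without loss.
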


\begin{proof}
Assume that $f$ has a linear factor $H_1 \in k[Y,Z]$.
By our assumptions $b_{60} \neq 0$ and $b_{06} \neq 0$, we may suppose $H_1 = Y + a_1 Z + a_2$, where $a_1, a_2 \in k$ with $a_1 \neq 0$.
Then, $f$ is also divided by
\[
\begin{aligned}
    H_2 &:= - H_1(-Y,Z) = Y - a_1 Z - a_2,\\
    H_3 &:= H_1(Y,-Z) = Y - a_1 Z + a_2,\\
    H_4 &:= - H_1(-Y,-Z) = Y + a_1 Z - a_2.\\
\end{aligned}
\]
If $H_1$, $H_2$, $H_3$, and $H_4$ are all different, then 
\[
F = b_{60} H_1 H_2 H_3 H_4 Q
\]
for some quadratic polynomial $Q \in k[Y,Z]$ whose $Y^2$-coefficient is equal to $1$.
Here, we claim that $Q$ is reducible.
Indeed, if $Q$ is irreducible, then it follows from $(\ast)$ that $Q = Y^2 + a_3 Z^2 +a_4$ for some $a_3, a_4 \in k$ with $a_3 \neq 0$.
It also follows from $b_{20} \neq 0$ that $a_2 \neq 0$, whence $a_4 = 0$ by $b_{00}=0$.
This implies $Q = (Y + \sqrt{a_3} Z)(Y - \sqrt{a_3} Z)$, a contradiction.
Therefore, $Q$ is reducible, say $Q = (Y + a_3 Z + a_4)(Y+a_5 Z + a_6)$ for some $a_3,a_4,a_5,a_6 \in k$ with $a_3,a_5 \neq 0$.
By $b_{20} \neq 0$, we obtain $a_2 \neq 0$, and hence $a_4 = 0$ or $a_6=0$ from $b_{00} = 0$.
We may suppose $a_4=0$.
In this case, it follows from $(\ast)$ that $Y - a_3 Z = Y+ a_3 Z$ or $Y - a_3 Z = Y + a_5 Z + a_6$.
The former case is impossible, since $b_{06} \neq 0$ implies $a_3 \neq 0$.
Thus, the latter case holds, so that $a_5 = -a_3$ and $a_6 = 0$, say $Q = (Y+a_3 Z)(Y-a_3 Z)$.
Considering $b_{20} = - b_{02}$, we have $a_3^2 = 1$, so that $Q =Y^2 -Z^2$.

Next, we consider the case where two of $H_1$, $H_2$, $H_3$, and $H_4$ are equal to each other.
Without loss of generality, we may suppose that $H_1$ is equal to one of the others.
In this case, it follows from $a_1 \neq 0$ that $H_1 \neq H_2$ and $H_1 \neq H_3$, whence $H_1 = H_4$.
Therefore, we have $a_2 = 0$, so that $f$ is divisible by $Y^2 - a_1^2 Z^2$.
A straightforward computation shows
\[
\begin{aligned}
    f =& (Y^2 - a_1^2 Z^2) Q \\
    &+(b_{60} a_1^6 + b_{42} a_1^4 + b_{24} a_1^2 + b_{06})Z^6 + (b_{40} a_1^4 + b_{22} a_1^2 +  b_{04}) Z^4 + b_{20}(a_1^2 - 1) Z^2,
\end{aligned}
\]
where
\[
Q = b_{60} Y^4 + (b_{60}a_1^2+ b_{42})Y^2 Z^2 + b_{40} Y^2 + (b_{60}a_1^4 + b_{42} a_1^2 + b_{24}) Z^4 + (b_{40}a_1^2 + b_{22}) Z^2 + b_{20} .
\]
Since $b_{20} \neq 0$, one has $a_1^2 = 1$, and thus \eqref{eq:linear1} holds.
\end{proof}

\begin{lemma}\label{lem:irred2}
With notation as above, $f$ cannot be factored into the product of three irreducible quadratic polynomials in $k[Y,Z]$.    
\end{lemma}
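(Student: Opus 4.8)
The plan is to argue by contradiction, exploiting the symmetry $(\ast)$ exactly as in the proof of Lemma~\ref{lem:iired1}. Suppose $f = Q_1 Q_2 Q_3$ with each $Q_i \in k[Y,Z]$ irreducible of total degree $2$. Since $\deg_Y f = 6$ while $\deg_Y Q_i \le 2$ for each $i$, we must have $\deg_Y Q_i = 2$ for all $i$; in particular each $Q_i$ has nonzero coefficient of $Y^2$, so after rescaling we may assume each $Q_i$ is monic in $Y$ and that $f = b_{60}Q_1Q_2Q_3$. Moreover $f$ has no irreducible factor in $k[Z]$ (otherwise $Z \mid b_{60}$), so this monic-in-$Y$ factorization is unique up to permutation of the $Q_i$.

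First I would set up the action of the Klein four-group $G = \{1,\sigma_Y,\sigma_Z,\sigma_Y\sigma_Z\}$, where $\sigma_Y\colon (Y,Z)\mapsto(-Y,Z)$ and $\sigma_Z\colon(Y,Z)\mapsto(Y,-Z)$. Each element of $G$ fixes $f$ (as $f \in k[Y^2,Z^2]$), sends a monic-in-$Y$ irreducible quadratic to another one, and preserves irreducibility; hence by uniqueness $G$ permutes $\{Q_1,Q_2,Q_3\}$. Since orbit sizes divide $|G|=4$ and sum to $3$, the only possibilities are (i) all three $Q_i$ are $G$-invariant, or (ii) exactly one $Q_i$ is $G$-invariant and the remaining two form one orbit of size $2$, whose point stabilizer is one of $\langle\sigma_Y\rangle$, $\langle\sigma_Z\rangle$, $\langle\sigma_Y\sigma_Z\rangle$.

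Then I would run through these cases using the normal forms dictated by the symmetry. A $G$-invariant monic-in-$Y$ quadratic is necessarily of the shape $Y^2 + cZ^2 + e$, and irreducibility over the algebraically closed field $k$ forces $c\neq 0$ and $e\neq 0$. In case (i) the constant term of $f$ equals $b_{60}$ times a product of three nonzero numbers, contradicting that $f$ has zero constant term. In case (ii), write the invariant factor as $Q_1 = Y^2 + c_1Z^2 + e_1$ with $c_1,e_1\neq 0$, and note that according to the stabilizer the product of the size-$2$ orbit takes one of the forms $(Y^2+cZ^2+e)^2 - d^2Z^2$, $(Y^2+cZ^2+e)^2 - b^2Y^2$, or $(Y^2+cZ^2+e)^2 - a^2Y^2Z^2$. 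In every case the vanishing of the constant term of $f$ forces $e = 0$, after which $Q_2Q_3$ is either homogeneous of degree $4$, or a degree-$4$ form plus a nonzero multiple of $Z^2$, or a degree-$4$ form plus a nonzero multiple of $Y^2$; multiplying by $Q_1$ one finds that $f$ then has no $Y^2$ term (first and third stabilizers) or no $Z^2$ term (second stabilizer), contradicting $b_{20}\neq 0$, respectively $b_{02}\neq 0$.

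The substance is all in the first two paragraphs: the potentially delicate point is to make the $G$-action on the factors rigorous, i.e.\ to justify that after normalizing the $Q_i$ monic in $Y$ the factorization is unique, so that $G$ really does permute them rather than merely permuting the zero sets. This rests on the two elementary observations that each $Q_i$ has full $Y$-degree $2$ and that $f$ has no factor in $k[Z]$. Once the action is in hand, the remaining case analysis is a short finite check with essentially no computation, since each case is killed just by inspecting the constant, $Y^2$, or $Z^2$ coefficient of the putative product.
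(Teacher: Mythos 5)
Your proposal is correct and follows essentially the same route as the paper: both arguments use the symmetry $(\ast)$ to see that the substitutions $Y\mapsto -Y$ and $Z\mapsto -Z$ permute the monic-in-$Y$ irreducible quadratic factors, and then derive a contradiction from $b_{00}=0$ together with $b_{20}=-b_{02}\neq 0$ by inspecting the constant, $Y^2$-, and $Z^2$-coefficients of the putative product. Your Klein four-group orbit bookkeeping is just a cleaner repackaging of the paper's hands-on case analysis of which factor equals $Q_1(-Y,Z)$ and $Q_1(Y,-Z)$.
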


\begin{proof}
Assume for a contradiction that $f = b_{60}Q_1 Q_2 Q_3$ for some irreducible $Q_i \in k[Y,Z]$ with $\mathrm{deg}(Q_i)=2$ such that the $Y^2$-coefficient in $Q_i$ is equal to $1$.
Since the constant term of $f$ is zero by our assumption, that of $Q_i$ is also zero for some $i$.
We may assume that the constant term of $Q_1$ is zero, say
\[
\left\{
\begin{aligned}
Q_1 & = Y^2 + (a_1 Z + a_2) Y + (a_3 Z^2 + a_4 Z),\\
Q_2 & = Y^2 + (a_5 Z + a_6) Y + (a_7 Z^2 + a_8 Z + a_{9}),\\
Q_3 & = Y^2 + (a_{10} Z + a_{11}) Y + (a_{12} Z^2 + a_{13} Z + a_{14}).
\end{aligned}
\right.
\]
Note that $a_9 = a_{14} = 0$ does not hold, since the $Y^2$-coefficient in $F$ is not zero.
Also by $(\ast)$, $F$ has an irreducible factor $Q_1 (-Y,Z)$, and therefore we may suppose $Q_1 = Q_1(-Y,Z)$ or $Q_2= Q_1(-Y,Z)$.

If $Q_1 = Q_1(-Y,Z)$, then $a_1=a_2=0$, so that $Q_1 = Y^2 + a_3 Z^2 + a_4 Z$.
Since $f$ is also divisible by $Q_1(Y,-Z)$, we have $Q_1 = Q_1(Y,-Z)$, $Q_2=Q_1(Y,-Z)$, or $Q_3 = Q_1(Y,-Z)$.
\begin{itemize}
    \item If $Q_1 = Q_1(Y,-Z)$, then $a_4 = 0$, whence we can factor $Q_1 = Y^2 + a_3 Z^2 = (Y + \sqrt{-a_3} Z) (Y - \sqrt{-a_3}Z)$.
    This contradicts the irreducibility of $Q_1$.
    \item For the other two cases, we may assume
    \[
    Q_2 = Q_1(Y,-Z) = Y^2 + a_3 Z^2 - a_4 Z.
    \]
    In this case, 
    the $Y^2$-coefficient in $Q_1Q_2Q_3$ becomes zero, which contradicts our assumption $b_{20} \neq 0$.
\end{itemize}
Therefore, we may suppose $Q_2= Q_1(-Y,Z) = Y^2 -(a_1 Z + a_2) Y + (a_3 Z^2 + a_4 Z)$.
By $a_9 = 0$ but $a_{14} \neq 0$, one can also deduce $Q_3 = Q_3(-Y,Z) = Q_3(Y,-Z)$, so that $Q_3 = Y^2 + a_{12} Z^2 + a_{14}$.
Here, we have $Q_1 = Q_1(Y,-Z)$ or $Q_2 = Q_1(Y,-Z)$.
\begin{itemize}
    \item If $Q_1 = Q_1(Y,-Z)$, then $a_1 = a_4 = 0$, whence $Q_1 = Y^2 + a_2 Y + a_3 Z^2$ and $Q_2 = Y^2 - a_2 Y + a_3 Z^2$.
    \item If $Q_2 = Q_1(Y,-Z)$, then $a_2 = a_4 = 0$, so that $Q_1 = Y^2 + a_1 Z Y + a_3 Z^2$ and $Q_2 = Y^2 - a_1 Z Y + a_3 Z^2$.
\end{itemize}
In any case, the $Z^2$-coefficient in $Q_1 Q_2 Q_3$ is equal to zero, a contradiction.
\end{proof}

\begin{lemma}\label{lem:irred3}
    If $f$ is factored into $f = b_{60}H_1 H_2$, where $H_1$ and $H_2$ are quadratic and quartic irreducible polynomials in $k[Y,Z]$ respectively, then we can take
    \begin{equation}\label{eq:A2}
        \begin{cases}
        H_1 = Y^2 + (s Z^2 + t),\\
        H_2 = Y^4 + (u Z^2 + v)Y^2 + (w Z^4 - v Z^2)
    \end{cases}
    \end{equation}
    for some $s,t,u,v,w \in k$ with $s,t,v,w \neq 0$, whence
    \begin{eqnarray*}
        H_1H_2 \!\!\!&=& \!\!\!Y^6 + (s + u) Y^4 Z^2 + (t + v) Y^4 + (s u + w) Y^2 Z^4 \\
        && + (sv + t u - v) Y^2 Z^2 + t v Y^2 + s w  Z^6  + (-s v + t w) Z^4 - t v Z^2.
    \end{eqnarray*} 
\end{lemma}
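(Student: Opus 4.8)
The plan is to determine the quadratic factor $H_1$ first, using the symmetry property $(\ast)$ together with unique factorization in $k[Y,Z]$, and then to recover $H_2$ by dividing out and comparing coefficients with $f$.

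First I would normalize the factorization so that the $Y^2$-coefficient of $H_1$ and the $Y^4$-coefficient of $H_2$ are both equal to $1$; this is legitimate because $b_{60}\neq 0$ forces $\deg_Y H_1 = 2$ and $\deg_Y H_2 = 4$, so each of $H_1,H_2$ has a nonzero leading $Y$-term and the two scalings can be traded off against each other. By $(\ast)$ the polynomials $H_1(-Y,Z)$ and $H_1(Y,-Z)$ both divide $f = b_{60}H_1H_2$; each is irreducible of degree $2$, and since the only irreducible factors of $f$ up to scalars are $H_1$ (degree $2$) and $H_2$ (degree $4$), each of them must be a scalar multiple of $H_1$. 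Comparing $Y^2$-coefficients, which are all $1$, gives $H_1(-Y,Z) = H_1 = H_1(Y,-Z)$, so $H_1$ is a polynomial in $Y^2$ and $Z^2$; being of total degree $2$, it has the shape $H_1 = Y^2 + sZ^2 + t$ for some $s,t\in k$. Irreducibility then forces $s\neq 0$ and $t\neq 0$, since otherwise $H_1$ equals $Y^2 + t$ or $Y^2 + sZ^2$, each of which splits into two linear factors over the algebraically closed field $k$.

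The same reasoning applied to $H_2$ shows $H_2(-Y,Z) = H_2 = H_2(Y,-Z)$, hence $H_2 = Y^4 + (uZ^2+v)Y^2 + (wZ^4 + v'Z^2 + w')$ for some $u,v,v',w,w'\in k$. Now I would expand the product $H_1H_2$ and match its coefficients against $f/b_{60}$. The vanishing of the constant term of $f$ gives $t w' = 0$, hence $w' = 0$ because $t\neq 0$; and the hypothesis $b_{20} = -b_{02}$, read off from the $Y^2$- and $Z^2$-coefficients of $H_1H_2$, which are $tv$ and $tv'$, gives $tv' = -tv$, hence $v' = -v$. This already puts $H_1,H_2$ into the asserted form \eqref{eq:A2}. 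Finally, the $Y^2$-coefficient $tv = b_{20}/b_{60}$ and the $Z^6$-coefficient $sw = b_{06}/b_{60}$ are nonzero by hypothesis, so combined with $s,t\neq 0$ we obtain $v\neq 0$ and $w\neq 0$, while $u$ stays free; the displayed expansion of $H_1H_2$ in the statement is then simply the record of this coefficient computation.

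Everything here is routine; the only point that deserves care is the unique-factorization bookkeeping, namely that an image of $H_1$ (or of $H_2$) under the substitutions in $(\ast)$ cannot coincide with the other factor — which is immediate from the degree count $2\neq 4$ — and that the residual scalar is killed by the leading-coefficient normalization. I do not expect any genuine obstacle in this lemma.
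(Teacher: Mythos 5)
Your proposal is correct and follows essentially the same route as the paper: use $(\ast)$ together with unique factorization (and the degree count $2\neq 4$) to conclude $H_i(-Y,Z)=H_i(Y,-Z)=H_i$, so each $H_i$ is a polynomial in $Y^2,Z^2$, and then read off $w'=0$, $v'=-v$, and the nonvanishing of $s,t,v,w$ from the constant term, $b_{20}=-b_{02}$, irreducibility of $H_1$, and $b_{20},b_{06}\neq 0$. The only difference is that you spell out the associate/normalization bookkeeping that the paper leaves implicit.
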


\begin{proof}
    Since $H_i = H_i(-Y,Z) = H_i(Y,-Z)$ for each $i \in \{1,2\}$, we can write $H_1 = Y^2  + a_1 Z^2 + a_2$, and $H_2 = Y^4 + (a_3 Z^2 + a_4)Y^2 + (a_5 Z^4 + a_6 Z^2 + a_7)$.
    Since $H_1$ is irreducible, and since the constant term of $H_1H_2$ is zero, we have $a_2 \neq 0$ and $a_7 = 0$.
    Moreover, it follows from $b_{20} = -b_{02} \neq 0$ and $a_2 \neq 0$ that $a_6 = - a_4$.
    Expanding $H_1H_2$ together with $b_{60}, b_{20}, b_{02} \neq 0$, we obtain $a_1,a_4,a_5 \neq 0$, as desired.
\end{proof}

\begin{lemma}\label{lem:irred4}
    If $f$ is factored into the product of cubic irreducible polynomials $H_1$ and $H_2$ in $k[Y,Z]$, then we can take
    \begin{equation}\label{eq:B6}
        \begin{cases}
        H_1 = Y^3 + (sZ)Y^2 + (t Z^2 +u) Y + (v Z^3 + \varepsilon u Z),\\
        H_2 =  Y^3 - (sZ)Y^2 + (t Z^2 +u) Y - (v Z^3 + \varepsilon u Z)\\
    \end{cases}
    \end{equation}
    for $s,t,u,v \in k$ with $u,v \neq 0$ and $\varepsilon = \pm 1$, whence
    \begin{eqnarray*}
            H_1 H_2 &=& Y^6 + (-s^2 + 2t) Y^4 Z^2 + 2u Y^4 + (-2sv + t^2) Y^2 Z^4 \\
            &&+ (-2\varepsilon su + 2tu) Y^2 Z^2 + u^2 Y^2 - v^2 Z^6 -2 \varepsilon u v Z^4 - u^2 Z^2.
    \end{eqnarray*}
    Hence, the following system holds:
    \begin{equation}\label{eq:linear2}
        \left\{
\begin{aligned}
b_{40}^2 -4 b_{60} b_{20} = 0,\\
b_{04}^2 + 4 b_{06} b_{20} = 0.
\end{aligned}
\right.
    \end{equation}
\end{lemma}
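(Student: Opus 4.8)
The plan is to exploit the $(\mathbb{Z}/2)^2$-symmetry of $f$ encoded in property $(\ast)$. Since $b_{60}\neq 0$ and each $H_i$ has total degree $3$, necessarily $\deg_Y H_1=\deg_Y H_2=3$ with nonzero constant leading $Y^3$-coefficients; hence, after rescaling, we may assume $H_1$ and $H_2$ are monic in $Y$, and then $f=b_{60}H_1H_2$ is the factorization of $f$ into monic-in-$Y$ irreducibles of $k[Y,Z]$, which is unique up to interchanging $H_1$ and $H_2$ because $k[Y,Z]$ is a UFD. Writing $\sigma_Y\colon Y\mapsto -Y$ and $\sigma_Z\colon Z\mapsto -Z$, property $(\ast)$ says that $-H_i(-Y,Z)$, $H_i(Y,-Z)$ and $-H_i(-Y,-Z)$ are again monic-in-$Y$ irreducible factors of $f$; hence the group $\langle\sigma_Y,\sigma_Z\rangle\cong(\mathbb{Z}/2)^2$ acts on the two-element set $\{H_1,H_2\}$.

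Next I would pin down this action. The involution $\sigma_Y$ cannot fix $H_1$: if $-H_1(-Y,Z)=H_1$, then $H_1$ involves only odd powers of $Y$, so $H_1=Y(Y^2+c(Z))$ is reducible, a contradiction (this also shows $H_1\neq H_2$). Hence $\sigma_Y$ swaps $H_1$ and $H_2$, and we may put $H_2=-H_1(-Y,Z)$. Likewise $\sigma_Z$ cannot fix $H_1$ (and hence neither of $H_1,H_2$, being a permutation of a $2$-set): if $H_1(Y,-Z)=H_1$, then $H_1=Y^3+\beta Y^2+(tZ^2+u)Y+(\eta Z^2+\iota)$ and $H_2=-H_1(-Y,Z)=Y^3-\beta Y^2+(tZ^2+u)Y-(\eta Z^2+\iota)$; vanishing of the constant term of $f$ forces $\iota=0$, and then the $Y^0$-part of $H_1H_2$ is just $-\eta^2Z^4$, so the $Z^2$-coefficient of $f$ is $0$, contradicting $b_{02}\neq 0$. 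Therefore $\sigma_Z$ also swaps $H_1$ and $H_2$, and consequently $\sigma_Y\sigma_Z$ fixes each of $H_1$ and $H_2$.

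Now I would read off the normal form. Writing the general monic cubic $H_1=Y^3+(sZ+\beta)Y^2+(tZ^2+\delta Z+u)Y+(vZ^3+\eta Z^2+wZ+\iota)$ and imposing $-H_1(-Y,-Z)=H_1$ (that $\sigma_Y\sigma_Z$ fixes $H_1$) kills $\beta,\delta,\eta,\iota$, leaving
\[
H_1=Y^3+sZY^2+(tZ^2+u)Y+(vZ^3+wZ),
\]
and then $H_2=-H_1(-Y,Z)=Y^3-sZY^2+(tZ^2+u)Y-(vZ^3+wZ)$. Setting $A=Y^3+(tZ^2+u)Y$ and $B=sZY^2+vZ^3+wZ$ we have $H_1H_2=A^2-B^2$, whose expansion I would match against $b_{60}^{-1}f$ coefficient by coefficient. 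The hypotheses $b_{20}\neq 0$ and $b_{06}\neq 0$ give $u\neq 0$ and $v\neq 0$, and $b_{20}=-b_{02}$ gives $u^2=w^2$, so $w=\varepsilon u$ with $\varepsilon\in\{\pm 1\}$; substituting $w=\varepsilon u$ produces exactly \eqref{eq:B6} and its stated expansion. Finally, from $b_{40}=2b_{60}u$, $b_{20}=b_{60}u^2$, $b_{04}=-2\varepsilon b_{60}uv$ and $b_{06}=-b_{60}v^2$ one obtains $b_{40}^2-4b_{60}b_{20}=0$ and $b_{04}^2+4b_{06}b_{20}=0$, i.e.\ \eqref{eq:linear2}.

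The one step needing genuine care rather than bookkeeping is excluding that $\sigma_Z$ stabilizes the factors individually: there one must actually expand a piece of $H_1H_2$ and invoke $b_{02}\neq 0$ (after using that the constant term of $f$ vanishes to eliminate the constant term of $H_1$). Once the $(\mathbb{Z}/2)^2$-action on $\{H_1,H_2\}$ is determined, the remainder is a routine polynomial expansion and coefficient comparison.
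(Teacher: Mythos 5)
Your proposal is correct and follows essentially the same route as the paper: use property $(\ast)$ together with irreducibility to show that both sign-change involutions must swap $H_1$ and $H_2$, deduce that each factor is odd under $(Y,Z)\mapsto(-Y,-Z)$, and then use $b_{20}=-b_{02}$ (with $b_{20},b_{06}\neq 0$) to force $w=\varepsilon u$ and read off \eqref{eq:linear2}. The only differences are cosmetic: where you rule out the fixed cases via reducibility of $Y(Y^2+c(Z))$ and via the vanishing constant term plus $b_{02}\neq 0$, the paper rules out both by noting they would kill the $Z^3$-coefficient, contradicting $b_{06}\neq 0$.
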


\begin{proof}
We put
\[
\left\{
\begin{aligned}
H_1 = Y^3 + (a_1 Z + a_2)Y^2 + (a_3 Z^2 + a_4 Z + a_5)Y + (a_6 Z^3 + a_7 Z^2 + a_8 Z + a_9),\\
H_2 = Y^3 + (a_1' Z + a_2')Y^2 + (a_3' Z^2 + a_4' Z + a_5')Y + (a_6' Z^3 + a_7' Z^2 + a_8' Z + a_9').\\
\end{aligned}
\right.
\]
Note that $a_6$ and $a_6'$ are not zero, by our assumption $b_{06} \neq 0$.
From $(\ast)$ and the irredicibilities of $H_1$ and $H_2$, we have the following:
\begin{itemize}
    \item $H_1 = -H_1 (-Y,Z)$ or $H_2 = -H_1 (-Y,Z)$, and
    \item $H_1 = H_1 (Y,-Z)$ or $H_2 = H_1 (Y,-Z)$.
\end{itemize}
Among the $4$ cases, the cases $H_1 = -H_1 (-Y,Z)$ and $H_1 = H_1(Y,-Z)$ do not occur, since each of the $2$ cases implies $a_6=0$, a contradiction.
Therefore we obtain $H_2 = - H_1 (-Y,Z)$ and $H_2 = H_1(Y,-Z)$, and hence
\[
\left\{
\begin{aligned}
Y^3 + a_1 Z Y^2 + (a_3 Z^2 + a_5)Y + (a_6 Z^3  + a_8 Z),\\
Y^3 - a_1 Z Y^2 + (a_3 Z^2 + a_5)Y - (a_6 Z^3 + a_8 Z).\\
\end{aligned}
\right.
\]
Moreover, it follows from $b_{20} = - b_{02}$ that $a_8^2 = a_5^2$, as desired.
\end{proof}

\section{Sextic model for genus-5 Howe curves}\label{sec:main}

In this section, we shall provide an explicit plane sextic model for non-hyperelliptic Howe curves of genus five.

\subsection{Non-hyperelliptic Howe curves of genus 5}\label{subsec:genus5}

Throughout this section, let $H$ be a non-hyperelliptic Howe curve of genus five associated with hyperelliptic curves $C_1$ and $C_2$ of genus $2$ sharing exactly $3$ ramification points in $\mathbb{P}^1$, as described in Subsection \ref{subsec:generalHowe}.
We denote by $\iota_1$ and $\iota_2$ the hyperelliptic involutions of $C_1$ and $C_2$ respectively, and by $\sigma_1$ and $\sigma_2$ their lifts to $H$.
Then, the quotient curve $H/ \langle \sigma_1 \sigma_2 \rangle$ has genus one, and we denote it by $C_3$.
Here, transforming the $3$ ramification points to $0$, $1$, and $\infty$ by an element of $\mathrm{PGL}_2(k)$, we may assume that $C_1$, $C_2$, and $C_3$ are given as follows:
\begin{eqnarray*}
    C_1 & : & y_1^2 = \phi_1 (x) = x(x-1)(x-\alpha_1)(x-\alpha_2)(x-\alpha_3),\\
    C_2 & : & y_2^2 = \phi_2 (x) = x(x-1)(x-\alpha_1)(x-\beta_2)(x-\beta_3),\\
    C_3 & : & y_3^2 = \phi_3 (x) = (x-\alpha_2)(x-\alpha_3)(x-\beta_2)(x-\beta_3).
\end{eqnarray*}
where $\alpha_1$, $\alpha_2$, $\alpha_3$, $\beta_2$, and $\beta_3$ are pairwise distinct elements in $k \smallsetminus \{ 0, 1 \}$.

\begin{definition}\label{def:HoweType}
A quintuple $(\alpha_1,\alpha_2,\alpha_3,\beta_2,\beta_3) \in k^5$ is said to be {\it of Howe type} if $\alpha_1$, $\alpha_2$, $\alpha_3$, $\beta_2$, $\beta_3$ are pairwise distinct, and if any of them does not belong to $\{0,1\}$.
\end{definition}

In this setting, $H$ is determined uniquely (without considering isomorphisms) by the pair $(\alpha_1, \{ \{ \alpha_2, \alpha_3 \}, \{ \beta_2, \beta_3 \} \})$ such that $(\alpha_1,\alpha_2,\alpha_3,\beta_2,\beta_3)$ is a point in $k^5$ of Howe type.
When the characteristic of $k$ is positive, and when we take $\alpha_i$'s and $\beta_j$'s to be elements in a finite field $\mathbb{F}_q$, the number of $H$ is equal to
\[
(q-2) \times \binom{q-3}{4} \times \binom{4}{2} \times \frac{1}{2}= \frac{(q-2)(q-3)(q-4)(q-5)(q-6)}{8}.
\]

We also note that a genus-$1$ curve $E :y^2=(x-\alpha)(x-\beta)(x-\gamma)(x-\delta)$ is isomorphic to $E_{\lambda} : y^2=x(x-1)(x-\lambda)$ in Legendre form with $\lambda = \frac{( \beta - \gamma ) ( \delta - \alpha )}{(\beta - \alpha ) (\delta - \gamma)}$, whence $C_3$ is isomorphic to $E_{\lambda}$ with $\lambda = \frac{( \alpha_3 - \beta_2 ) ( \beta_3 - \alpha_2 )}{(\alpha_3 - \alpha_2 ) (\beta_3 - \beta_2)}$.

\if 0
Ohashi recently proved the following:

\begin{theorem}[Ohashi, 2023]
    If the genus $2$-curve $y^2 = x(x-1)(x-A)(x-B)(x-C)$ is superspecial, then...
\end{theorem}

\begin{proposition}
    If $H$ is superspecial, then $\alpha_1$, $\alpha_2$, $\alpha_3$, $\beta_2$, and $\beta_3$ are all ... in $\mathbb{F}_{p^2}$.
    Moreover, 
    \begin{itemize}
        \item If $p \equiv 3 \pmod{4}$, then $H$ is $\mathbb{F}_{p^2}$-maximal.
        \item If $p \equiv 1 \pmod{4}$, then $H$ is $\mathbb{F}_{p^2}$-minimal.
    \end{itemize}
\end{proposition}

\begin{proof}
$E_1$ is isomorphic to $E_{\lambda} : y^2=x(x-1)(x-\lambda)$ with
\[
\lambda = \frac{( \alpha_3 - \beta_2 ) ( \beta_3 - \alpha_2 )}{(\alpha_3 - \alpha_2 ) (\beta_3 - \beta_2)}.
\]
\end{proof}
\fi

\if 0

Let $Y$ and $Z$ are new variables, and put $A:=Y^2/Z^2$.
We also set
\[
\begin{aligned}
    f_1 :=& (x-\alpha_2)(x-\alpha_3)-Y^2x(x-1)(x-\alpha_1)\\
    =& -Y^2 x^3 + ((\alpha_1 + 1) Y^2 + 1) x^2 + (-\alpha_1 Y^2 - \alpha_2 - \alpha_3 ) x + \alpha_2 \alpha_3, \\
    f_2 :=& (x-\beta_2)(x-\beta_3)-Z^2x(x-1)(x-\alpha_1)\\
    =&-Z^2 x^3 + ((\alpha_1 + 1) Z^2 + 1) x^2 + (-\alpha_1 Z^2 - \beta_2 - \beta_3) x + \beta_2 \beta_3.
\end{aligned}  
\]
Regarding $f_1$ and $f_2$ as polynomials in $x$ over the rational function field $k(Y,Z)$, we eliminate their highest degree terms as follows:
\[
\begin{aligned}
    g:= & A f_2 - f_1\\
    = &(A-1)x^2+(\alpha_2+\alpha_3-A(\beta_2+\beta_3))x+A\beta_2\beta_3-\alpha_2\alpha_3\\
    = & \frac{Y^2-Z^2}{Z^2} x^2 + \frac{(\alpha_2+\alpha_3)Z^2 - (\beta_2+\beta_3) Y^2}{Z^2} x + \frac{\beta_2 \beta_3 Y^2- \alpha_2\alpha_3 Z^2}{Z^2}.
\end{aligned}
\]
Let $h$ be the remainder of $f_1$ by $g$ as polynomials in $x$, say
\[
\begin{aligned}
    h:= & f_1 + \frac{Y^2 Z^2}{Y^2-Z^2} x g \\
    & - \frac{( (\alpha_1 - \beta_2 - \beta_3 +1) Y^4 - (\alpha_1 - \alpha_2 - \alpha_3 + 1) Y^2 Z^2 + Y^2 -Z^2 )Z^2}{(Y^2-Z^2)^2} g.
\end{aligned}
\]
Note that the degree of $h$ with respect to $x$ is less than or equal to $1$.
A tedious calculation shows that
\[
h = \frac{q_1}{(Y^2-Z^2)^2} x + \frac{q_0}{(Y^2-Z^2)^2},
\]
where $q_1,q_0 \in k[Y,Z]$.
In particular, the coefficient of $x$ in $h$ is computed as
\begin{eqnarray*}
 && (-\alpha_1 Y^2 - \alpha_2 - \alpha_3 ) + \frac{Y^2 Z^2}{Y^2-Z^2} \cdot \frac{\beta_2 \beta_3 Y^2- \alpha_2\alpha_3 Z^2}{Z^2}\\
&& - \frac{( (\alpha_1 - \beta_2 - \beta_3 +1) Y^4 - (\alpha_1 - \alpha_2 - \alpha_3 + 1) Y^2 Z^2 + Y^2 - Z^2 )Z^2}{(Y^2-Z^2)^2} \\
&&\cdot \frac{(\alpha_2+\alpha_3)Z^2 - (\beta_2+\beta_3) Y^2}{Z^2},
\end{eqnarray*}
whence
\begin{eqnarray*}
q_1 &=& (-\alpha_1 Y^2 - \alpha_2 - \alpha_3 ) (Y^2-Z^2)^2 \\
&& + (\beta_2 \beta_3 Y^2- \alpha_2\alpha_3 Z^2)Y^2(Y^2-Z^2) - ( (\alpha_1 - \beta_2 - \beta_3 +1) Y^4 \\
&& - (\alpha_1 - \alpha_2 - \alpha_3 + 1) Y^2 Z^2 + Y^2 - Z^2 ) ((\alpha_2+\alpha_3)Z^2 - (\beta_2+\beta_3) Y^2)\\
&=& F_1 Y^6 + F_2 Y^4 Z^2 + F_3 Y^4 + F_4 Y^2 Z^4 - F_3Y^2 Z^2,\\
q_0 &= & -(\alpha_1 - \beta_2  - \beta_3 +1) \beta_2\beta_3 Y^6 + (\alpha_1 \alpha_2 \alpha_3 + \alpha_1 \beta_2 \beta_3 - \alpha_2 \alpha_3 \beta_2 - \alpha_2 \alpha_3 \beta_3\\
    & & + \alpha_2 \alpha_3 - \alpha_2 \beta_2 \beta_3 - \alpha_3 \beta_2 \beta_3 + \beta_2 \beta_3) Y^4 Z^2 + (\alpha_2 \alpha_3 - \beta_2 \beta_3)Y^4 \\
    & & - (\alpha_1 - \alpha_2 - \alpha_3 +1) \alpha_2 \alpha_3 Y^2 Z^4 - (\alpha_2 \alpha_3 - \beta_2 \beta_3) Y^2 Z^2,
\end{eqnarray*}
where
\begin{eqnarray*}
    F_1 &:=& - \alpha_1 + \beta_2\beta_3 + (\alpha_1-\beta_2-\beta_3+1)(\beta_2+\beta_3)\\
    &=& \alpha_1\beta_2 + \alpha_1\beta_3 - \beta_2^2  - \beta_2\beta_3 - \beta_3^2 - \alpha_1 + \beta_2 + \beta_3\\
    F_2 &:=& 2\alpha_1 -\alpha_2\alpha_3 - \beta_2 \beta_3\\
    &&- (\alpha_1 - \beta_2 - \beta_3 +1)(\alpha_2+\alpha_3) -  (\alpha_1 - \alpha_2 - \alpha_3 +1)(\beta_2+\beta_3) \\
    &=&- \alpha_1\alpha_2 - \alpha_1\alpha_3 - \alpha_2\alpha_3 - \alpha_1\beta_2 + 2\alpha_2\beta_2 + 2\alpha_3\beta_2 - \alpha_1\beta_3 \\
        & & + 2\alpha_2\beta_3 + 2\alpha_3\beta_3  - \beta_2\beta_3 + 2\alpha_1 - \alpha_2 - \alpha_3 - \beta_2 - \beta_3, \\
    F_3 &=& - \alpha_2 -\alpha_3 + \beta_2 + \beta_3 ,\\
    F_4 &:=& -\alpha_1 + \alpha_2 \alpha_3 + (\alpha_1 - \alpha_2 - \alpha_3 +1)(\alpha_2+\alpha_3)\\
    &=&\alpha_1\alpha_2  + \alpha_1\alpha_3 - \alpha_2^2 - \alpha_2\alpha_3 - \alpha_3^2 - \alpha_1 + \alpha_2 + \alpha_3 .
\end{eqnarray*}
\if 0
\begin{eqnarray*}
q_0 &= & -(\alpha_1 - \beta_2  - \beta_3 +1) \beta_2\beta_3 Y^6 + (\alpha_1 \alpha_2 \alpha_3 + \alpha_1 \beta_2 \beta_3 - \alpha_2 \alpha_3 \beta_2 - \alpha_2 \alpha_3 \beta_3\\
    & & + \alpha_2 \alpha_3 - \alpha_2 \beta_2 \beta_3 - \alpha_3 \beta_2 \beta_3 + \beta_2 \beta_3) Y^4 Z^2 + (\alpha_2 \alpha_3 - \beta_2 \beta_3)Y^4 \\
    & & - (\alpha_1 - \alpha_2 - \alpha_3 +1) \alpha_2 \alpha_3 Y^2 Z^4 - (\alpha_2 \alpha_3 - \beta_2 \beta_3) Y^2 Z^2
\end{eqnarray*}
\fi
Thus, $\deg_x{h}<1$ is equivalent to that $(\alpha_1,\alpha_2,\alpha_3,\beta_2,\beta_3)$ is a root of the system of equations $F_1=F_2=F_3=F_4=0$:

\begin{lemma}\label{lem:deg1}
With the notation as above, we have $\deg_x{h}=1$.
\end{lemma}

\begin{proof}
Regarding $\alpha_1$, $\alpha_2$, $\alpha_3$, $\beta_2$, and $\beta_3$ as variables, we consider the ideal $I := \langle F_1,F_2,F_3,F_4 \rangle \subset k[\alpha_1,\alpha_2,\alpha_3,\beta_2,\beta_3]$.
It suffices to prove that the affine algebraic set $V(I)$ in $\mathbb{A}^5(k)$ has no element $(\alpha_1,\alpha_2,\alpha_3,\beta_2,\beta_3)$ such that $\alpha_1$, $\alpha_2$, $\alpha_3$, $\beta_2$, and $\beta_3$ are pairwise distinct.
One can check that $F_4 = F_3^2 + F_1 + F_2$, and thus $I = \langle F_1, F_2, F_3 \rangle$.
It also follows that $I$ contains an element
\begin{eqnarray*}
    G_2 &:=& 2 F_1-(\alpha_1+\alpha_3-2 \beta_2-2 \beta_3+1)F_3+F_2\\
    &=& \alpha_3^2 - \alpha_3 \beta_2 - \alpha_3 \beta_3 + \beta_2 \beta_3 \\
    &=&(\alpha_3-\beta_2)(\alpha_3-\beta_3).
\end{eqnarray*}
Therefore, we have $I = \langle F_1,G_2,F_3 \rangle$, and $G_2$ is not zero as long as $\alpha_3 \neq \beta_2$ and $\alpha_3 \neq \beta_3$, as desired.
\end{proof}

\begin{remark}
We heuristically found the polynomial $G_2$ in the proof of Lemma \ref{lem:deg1} as an element of a Gr\"{o}bner basis of $I$; replacing $k$ by $\mathbb{Q}$, we computed the Gr\"{o}bner basis with Magma.
In fact, we can prove theoretically that $\{F_1,G_3,G_2\}$ with $G_3:=-F_3$ is a Gr\"{o}bner basis of $I$ with respect to the lexicographical order with $\alpha_1 > \alpha_2 > \alpha_3 > \beta_2 > \beta_3$, as follows:
The leading monomials of $F_1$, $G_3$, and $G_2$ are respectively $\alpha_1 \beta_2$, $\alpha_2$, and $\alpha_3^2$, which are pairwise disjoint, i.e., they have no variable in common.
Buchberger's first criterion~\cite[Lemma 5.66]{Becker1993} implies that the $S$-polynomial of two polynomials having disjoint leading monomials is reduced into zero by themselves, and hence $\{ F_1, G_3, G_2 \}$ is a (in fact the reduced) Gr\"{o}bner basis of $I$.
\end{remark}%

Here, let $x_0 \in k(Y,Z)$ be a root of $h(x)=0$, say
\[
x_0 = - \frac{\frac{q_0}{(Y^2-Z^2)^2}}{\frac{q_1}{(Y^2-Z^2)^2}} =-\frac{q_0}{q_1} = -\frac{q_0/Y^2}{F_1 Y^4 + F_2 Y^2 Z^2 + F_3 Y^2 + F_4 Z^4 - F_3 Z^2}
\]
A tedious computation shows that
\begin{eqnarray*}
    g(x_0) &=& \frac{N_g(Y,Z)}{Z^2(F_1 Y^4 + F_2 Y^2 Z^2 + F_3 Y^2 + F_4 Z^4 - F_3 Z^2)^2},\\
    f_2(x_0) &=& \frac{N_{f_2}(Y,Z)}{(F_1 Y^4 + F_2 Y^2 Z^2 + F_3 Y^2 + F_4 Z^4 - F_3 Z^2)^3},
\end{eqnarray*}
where $N_g$ (resp.\ $N_{f_2}$) is a polynomial in $Y$ and $Z$ with all terms of degree $\leq 10$ (resp.\ $\leq 14$).

\begin{lemma}
    As polynomials in $Y$ and $Z$, the numerators $N_g$ and $N_{f_2}$ has a common factor of degree $6$ given by
    \[
    c_{60}Y^6 + c_{42} Y^4Z^2  + c_{40} Y^4  + c_{24}Y^2 Z^4 + c_{22}Y^2 Z^2 + c_{20}Y^2  + c_{06}Z^6 + c_{04} Z^4 + c_{02} Z^2.
    \]
\end{lemma}

\begin{proof}
When we regard $\alpha_1$, $\alpha_2$, $\alpha_3$, $\beta_2$, and $\beta_3$ as variables, a computation on Magma shows that
\[
\tilde{N}_{g}(Y,Z) = (Y^2 - Z^2)^2 \cdot \tilde{e} (Y,Z)
\]
for some sextic polynomial $\tilde{e}(Y,Z)$ in $Y$ and $Z$ over $k[\alpha_1,\alpha_2,\alpha_3,\beta_2,\beta_3]$, where we use ``$\sim$'' to regard $\alpha_1$, $\alpha_2$, $\alpha_3$, $\beta_2$, and $\beta_3$ as variables.
Here, the $Y^{10}$-coefficient of $\tilde{N}_{g}$ is the same as the $Y^6$-coefficient of $\tilde{e}(Y,Z)$, and it is
\begin{eqnarray*}
    && \alpha_1^2 \beta_2^2 \beta_3^2 - \alpha_1^2 \beta_2^2 \beta_3 - \alpha_1^2 \beta_2 \beta_3^2  + \alpha_1^2 \beta_2 \beta_3 - \alpha_1 \beta_2^3 \beta_3^2 + \alpha_1 \beta_2^3 \beta_3 - \alpha_1 \beta_2^2 \beta_3^3\\
    && + 2 \alpha_1 \beta_2^2 \beta_3^2 -\alpha_1 \beta_2^2 \beta_3 + \alpha_1 \beta_2 \beta_3^3 - \alpha_1 \beta_2 \beta_3^2 + \beta_2^3 \beta_3^3 - \beta_2^3 \beta_3^2 - \beta_2^2 \beta_3^3 + \beta_2^2 \beta_3^2\\
    &=& (\alpha_1-\beta_2) (\alpha_1-\beta_3)\beta_2 (\beta_2-1)\beta_3(\beta_3-1).
\end{eqnarray*}
Even if $\alpha_1$, $\alpha_2$, $\alpha_3$, $\beta_2$, and $\beta_3$ are exact values in $k$, this coefficient is non-zero by our assumption on $(\alpha_1,\alpha_2,\alpha_3,\beta_2,\beta_3)$.
Therefore, $N_g$ and $e$ have degree $10$ and $6$ respectively, and 
\[
N_g (Y,Z) = (Y^2 - Z^2)^2 \cdot e (Y,Z)
\]
as polynomials in $K[Y,Z]$.

A computation on Magma also shows that
\[
\tilde{N}_{f_2}(Y,Z) = \tilde{r} (Y,Z) \cdot \tilde{e}(Y,Z)
\]
for some $\tilde{r}(Y,Z) \in k[\alpha_1,\alpha_2,\alpha_3,\beta_2,\beta_3][Y,Z]$ of degree $8$.
Here, we claim that $N_{f_2}(Y,Z) \neq 0$.
In fact, we can show that it has degree $\geq 12$ as follows:
Assume for a contradiction that the coefficients of $Y^{12}Z^2$, $Y^{12}$, $Y^6$, and $Z^{14}$ in 
$N_{f_2}$ are all zero.
In this case, factoring each of the four coefficients tells to us that
\begin{eqnarray*}
\begin{cases}
       H_1 :=  (\beta_2 + \beta_3 - 1)(\alpha_1 - \beta_2 - \beta_3)(\alpha_1 - \beta_2 - \beta_3+1) = 0,\\
    H_2:=F_1=- \alpha_1 + \beta_2\beta_3 + (\alpha_1-\beta_2-\beta_3+1)(\beta_2+\beta_3) = 0,\\
    H_3:=-F_3 = (\alpha_2 + \alpha_3) - (\beta_2+\beta_3) = 0,\\
    H_4:=(\alpha_2 + \alpha_3 - 1)(\alpha_1 - \alpha_2 - \alpha_3)(\alpha_1 - \alpha_2 - \alpha_3+1) = 0.
\end{cases}
\end{eqnarray*}
From the three factors of $H_1$, we have the following:
\begin{itemize}
    \item If $\beta_2 + \beta_3 = 1 $ or $\alpha_1 = \beta_2 + \beta_3$, then we have $(\alpha_1-\beta_2-\beta_3+1)(\beta_2+\beta_3) = \alpha_1$.
    It follows from $H_2=0$ that $\beta_2 \beta_3 = 0$, which contradicts our assumption $\beta_2, \beta_3 \neq 0$. 
    \item If $\alpha_1 - \beta_2 - \beta_3 + 1 = 0$, then we have $\alpha_1 = \beta_2 \beta_3$ by $H_2 = 0$, and therefore $\alpha_1 - \beta_2 - \beta_3 + 1 = (\beta_2 - 1)(\beta_3 -1) =0$.
    This contradicts our assumption $\beta_2, \beta_3 \neq 1$.
\end{itemize}
Here, we have
\[
{N}_{f_2}(Y,Z) = {r} (Y,Z) \cdot {e}(Y,Z)
\]
as polynomials in $k[Y,Z]$, and $N_{f_2}$, $r$, and $e$ are all non-zero.

As a consequence, $N_g(Y,Z)$ and $N_{f_2}(Y,Z)$ are non-zero polynomials in $k[Y,Z]$ having a common factor $e(Y,Z) \in k[Y,Z]$ of degree $6$.
\end{proof}




\begin{remark}
One has
\[
f = \frac{({Y^2-Z^2}) q_0^2 - ({(\alpha_2+\alpha_3)Z^2 - (\beta_2+\beta_3) Y^2}) q_0q_1 + (\beta_2 \beta_3 Y^2- \alpha_2\alpha_3 Z^2)q_1^2}{Y^4 (Y^2-Z^2)^2},
\]
where both $q_1$ and $q_0$ are divided by $Y^2$.
\end{remark}

Here, we prove that $f({Y,Z})=0$ provides a plane model for the normalization $H$ of $C_1 \times_{\mathbb{P}^1} C_2$:

\begin{theorem}
Let $C$ be a (non-smooth) curve defined by $f(Y,Z)=0$.
Then, a map $\phi \colon C_1 \times_{\mathbb{P}^1}C_2 \to C$ defined by
\[
(x,y_1,y_2) \longmapsto (Y,Z)=\left(\frac{y_1}{x(x-1)(x-\alpha_1)},\frac{y_2}{x(x-1)(x-\alpha_1)}\right)
\]
is a birational map.
\end{theorem}
\begin{proof}
Let $x_0(Y,Z)$ be an element in $\mathbb{Z}(Y,Z)$ such that \[
h(x_0(Y,Z),Y,Z)=0.
\]
We now prove that
\[
x_0\left(\frac{y_2}{x(x-1)(x-\alpha_1)},\frac{y_3}{x(x-1)(x-\alpha_1)}\right)=x.
\]
From the definition of $f_1$, $f_2$, and $g$, it holds that
\begin{gather*}
f_1\left(x,\frac{y_1}{x(x-1)(x-\alpha_1)},\frac{y_2}{x(x-1)(x-\alpha_1)}\right)=0,\\
f_2\left(x,\frac{y_1}{x(x-1)(x-\alpha_1)},\frac{y_2}{x(x-1)(x-\alpha_1)}\right)=0,\\
g\left(x,\frac{y_1}{x(x-1)(x-\alpha_1)},\frac{y_2}{x(x-1)(x-\alpha_1)}\right)=0.
\end{gather*}
Therefore, we have $h\left(x,\frac{y_1}{x(x-1)(x-\alpha_1)},\frac{y_2}{x(x-1)(x-\alpha_1)}\right)=0$. 
Since $\deg_x{h} = 1$ from Lemma \ref{lem:deg1}, we have $x_0\left(\frac{y_1}{x(x-1)(x-\alpha_1)},\frac{y_2}{x(x-1)(x-\alpha_1)}\right)=x$.
Therefore, from the definition of $p$, it holds that
\[
f\left(\frac{y_1}{x(x-1)(x-\alpha_1)},\frac{y_2}{x(x-1)(x-\alpha_1)}\right)=0.
\]
Hence, the map $\phi$ is well-defined.

Define a map $\psi \colon C \to C_1 \times_{\mathbb{P}^1}C_2$ by
\[
(Y,Z) \longmapsto (x,x(x-1)(x-\alpha_1)Y,x(x-1)(x-\alpha_1)Z),
\]
where $x=x_0(Y,Z)$ (\textcolor{red}{exclude the case where the denominator is zero}). From $g(x,Y,Z)=0$ and $f_2(x,Y,Z)=0$, it holds that $f_1(x,Y,Z)=0$ and $f_2(x,Y,Z)=0$. Hence, the map $\psi$ is well-defined.

It is easy to see that $\phi\circ\psi={\rm id}_{C}$ and $\psi\circ \phi = {\rm id}_{C_1 \times_{\mathbb{P}^1}C_2}$ except for singularities.
\end{proof}

\fi

\subsection{Construction of our plane sextic model}\label{subsec:our sextic}

By setting
\[
\begin{aligned}
\phi &:= x(x-1)(x-\alpha_1) = x^3 - \sigma_1 x^2 + \sigma_2 x - \sigma_3,\\
Q_1 &:= (x-\alpha_2)(x-\alpha_3) = x^2 - \tau_1 x + \tau_2,\\
Q_2 &:= (x-\beta_2)(x-\beta_3) = x^2 - \rho_1 x + \rho_2,
\end{aligned}
\]
we can write $C_1 : y_1^2 = \phi Q_1$, $C_2 : y_2^2 = \phi Q_2$, and $C_3 : y_3^2 = Q_1Q_2$,
where
\[
\begin{array}{llll}
\sigma_1 := 1+\alpha_1, & \sigma_2 := \alpha_1, & \sigma_3 := 0, & \\
\tau_1 := \alpha_2 + \alpha_3, & \tau_2 := \alpha_2 \alpha_3, & \rho_1 := \beta_2 + \beta_3, & \rho_2 := \beta_2\beta_3.
\end{array}
\]
Putting $Y := \displaystyle \frac{y_1}{\phi}$ and $Z := \dfrac{y_2}{\phi}$ and squaring both sides, we obtain $\phi Y^2 = Q_1$ and $\phi Z^2 = Q_2$.
Here, we put
\[
\begin{aligned}
f_1 &:= \phi Y^2 - Q_1 = Y^2 x^3 - (1+ \sigma_1 Y^2) x^2 + (\tau_1 +\sigma_2 Y^2) x -(\tau_2 + \sigma_3 Y^2),\\
f_2 &:= \phi Z^2 - Q_2=Z^2 x^3 - (1+ \sigma_1 Z^2) x^2 + (\rho_1 +\sigma_2 Z^2) x - (\rho_2 + \sigma_3 Z^2),\\
f &:= \mathrm{Res}_x (f_1,f_2) \in k[Y,Z].
\end{aligned}
\]
Denoting by $|M|$ the determinant of a square matrix $M$, we have
\[
\begin{aligned}
    &f =
\begin{vmatrix}
Y^2 & -1- \sigma_1 Y^2 & \tau_1 +\sigma_2 Y^2 & -\tau_2 - \sigma_3 Y^2 & 0 & 0\\
0 & Y^2 & -1- \sigma_1 Y^2 & \tau_1 +\sigma_2 Y^2 & -\tau_2 - \sigma_3 Y^2 & 0\\
0 & 0 & Y^2 & -1- \sigma_1 Y^2 & \tau_1 +\sigma_2 Y^2 & -\tau_2 - \sigma_3 Y^2\\
Z^2 & -1- \sigma_1 Z^2 & \rho_1 +\sigma_2 Z^2 & -\rho_2 - \sigma_3 Z^2 & 0 & 0\\
0 & Z^2 & -1- \sigma_1 Z^2 & \rho_1 +\sigma_2 Z^2 & -\rho_2 - \sigma_3 Z^2 & 0\\
0 & 0 & Z^2 & -1- \sigma_1 Z^2 & \rho_1 +\sigma_2 Z^2 & -\rho_2 - \sigma_3 Z^2
\end{vmatrix}\\
&=\begin{vmatrix}
Y^2 & -1 & \tau_1-\sigma_1 & -\tau_2+\sigma_2+\sigma_1(\tau_1-\sigma_1)  & -\sigma_3-\sigma_2(\tau_1-\sigma_1) & \sigma_3(\tau_1-\sigma_1)\\
0 & Y^2 & -1 & \tau_1-\sigma_1 & -\tau_2+\sigma_2  & -\sigma_3\\
0 & 0 & Y^2 & -1 & \tau_1 & -\tau_2 \\
Z^2 & -1 & \rho_1-\sigma_1 & -\rho_2+\sigma_2+\sigma_1(\rho_1-\sigma_1) & -\sigma_3-\sigma_2(\rho_1-\sigma_1) & \sigma_3(\rho_1-\sigma_1)\\
0 & Z^2 & -1 & \rho_1-\sigma_1 & -\rho_2+\sigma_2 & -\sigma_3\\
0 & 0 & Z^2 & -1 & \rho_1 & -\rho_2 
\end{vmatrix}.
\end{aligned}
\]
This implies that $f$ is a polynomial in $Y^2$ and $Z^2$ with no constant term, and that its total degree is at most $6$.
Furthermore, the following lemma is obtained:
\begin{lemma}\label{lem:c60}
    Denoting by $c_{ij}$ the $Y^iZ^j$-coefficient in $f$, we have the following:
    \begin{enumerate}
        \item[(1)] $c_{60} = -\mathrm{Res}_x(\phi, Q_2)$ and $c_{06} = \mathrm{Res}_x(\phi, Q_1)$.
        \item[(2)] $c_{20} = -\mathrm{Res}_x(Q_1, Q_2)$ and $c_{02} = \mathrm{Res}_x(Q_1, Q_2)$.
    \end{enumerate}
    Hence, each of these $4$ coefficients is not zero for every quintuple $(\alpha_1,\alpha_2,\alpha_3,\beta_2,\beta_3)$ of Howe type.
    In particular, $f$ has total degree $6$.
\end{lemma}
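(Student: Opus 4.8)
The key observation is that $f = \mathrm{Res}_x(f_1, f_2)$ is the resultant of two cubics in $x$ whose leading coefficients are $Y^2$ and $Z^2$ respectively, and the claimed coefficients are obtained by specializing $Y$ or $Z$ to carefully chosen values. First I would exploit the multiplicativity and specialization behavior of the resultant. To extract $c_{60}$, the coefficient of $Y^6$, note that $c_{60}$ equals the coefficient of $Y^6$ in $f$, which can be read off by a degree argument: writing $f = \sum_{i,j} c_{2i,2j} Y^{2i} Z^{2j}$, the $Y^6$-coefficient $c_{60}$ is the value of $\frac{1}{Z^{\text{(something)}}}\cdot(\text{leading-in-}Y\text{ part})$. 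More cleanly, I would substitute a formal parameter and use the formula $\mathrm{Res}_x(f_1, f_2) = \mathrm{lc}_x(f_2)^{\deg f_1} \prod_{f_2(\xi)=0} f_1(\xi)$ or the symmetric version; setting things up so that the $Y^6$-term comes from the leading coefficient of $f_1$ (which is $Y^2$) raised to the third power times $\mathrm{Res}_x(\phi, Q_2)$ up to sign. Concretely, $f_1 = Y^2\phi - Q_1$, so as $Y \to \infty$ the dominant behavior is governed by $Y^2 \phi$, and $\mathrm{Res}_x(Y^2\phi - Q_1,\ Z^2\phi - Q_2)$ has $Y^6 Z^6$-free leading structure dictated by $\mathrm{Res}_x(\phi, \phi) = 0$; so the top $Y$-degree term with $Z$ held generic is $\pm Z^{?}\cdot(\cdots)$ — I would instead directly set $Z = 0$ to kill that interaction: then $f_2|_{Z=0} = -Q_2$, and $\mathrm{Res}_x(f_1, -Q_2) = (-1)^{2\cdot 3}\mathrm{Res}_x(f_1, Q_2)$ (degree of $f_1$ in $x$ is $3$, of $Q_2$ is $2$), and $\mathrm{Res}_x(f_1, Q_2)$ as a polynomial in $Y^2$ has $Y^6$-coefficient equal to $\mathrm{lc}(\text{in }Y^2\text{ of }f_1)^{\deg Q_2}\cdot$(value) $= \mathrm{Res}_x(\phi, Q_2)$ up to the sign coming from $\mathrm{lc}_x(Q_2)^{\deg_x f_1}=1$. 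Carrying out the sign bookkeeping gives $c_{60} = -\mathrm{Res}_x(\phi, Q_2)$; symmetrically $c_{06} = +\mathrm{Res}_x(\phi, Q_1)$ by swapping the roles (and the swap introduces a sign $(-1)^{3\cdot 3} = -1$ from $\mathrm{Res}(f_1,f_2) = (-1)^{9}\mathrm{Res}(f_2,f_1)$, which flips one sign relative to the other — this accounts for the asymmetry in the two signs).

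For part (2), I would set $Y = Z$: then $f_1 - f_2 = (Q_2 - Q_1)$, which has degree $\leq 1$ in $x$ (the $x^2$ and $x^3$ terms cancel since $\phi$ is common), in fact $Q_2 - Q_1 = (\tau_1 - \rho_1)x + (\rho_2 - \tau_2)$, a linear form. Restricting $f = \mathrm{Res}_x(f_1, f_2)$ to $Y = Z$ and using row reduction on the $6\times 6$ Sylvester matrix (subtract the $f_2$-rows from the $f_1$-rows), the matrix degenerates: three rows become the coefficients of the linear polynomial $Q_1 - Q_2$, and the resultant restricted to $Y=Z$ becomes (up to sign and up to a power of $Y^2$) the resultant $\mathrm{Res}_x(Q_1 - Q_2,\ Q_2)$ times $\mathrm{Res}_x(\phi Y^2 - Q_1, Q_2)/(\ldots)$ — this is getting delicate; the cleaner route is to observe that on $Y = Z$ the two curves $\phi Y^2 = Q_1$ and $\phi Y^2 = Q_2$ force $Q_1 = Q_2$ at common $x$, so the resultant vanishes unless we track the lowest-order term. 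I would instead extract $c_{20}$ (the $Y^2$-coefficient, i.e.\ the coefficient with $Z^0$) by setting $Z = 0$ and looking at the \emph{linear} term in $Y^2$: from $\mathrm{Res}_x(f_1, -Q_2) = \mathrm{Res}_x(f_1, Q_2)$, expand $\mathrm{Res}_x(Y^2\phi - Q_1,\ Q_2) = \mathrm{lc}_x(Q_2)^{3}\prod_{Q_2(\xi)=0}(Y^2\phi(\xi) - Q_1(\xi)) = \prod_{i=2,3}(Y^2\phi(\beta_i) - Q_1(\beta_i))$ (here $\beta_2,\beta_3$ are the roots of $Q_2$); its $Y^2$-coefficient is $-\sum_{i}\phi(\beta_i)\prod_{j\neq i}(-Q_1(\beta_j))\cdot(-1) = -\sum_i \phi(\beta_i) Q_1(\beta_{5-i})$... again messy. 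Cleanest of all: the $Y^0 Z^0$-coefficient of $f$ vanishes (no constant term, already established), and the coefficient $c_{20}$ of $Y^2$ is obtained by the standard fact that for $f(Y,Z) = \mathrm{Res}_x(Y^2\phi - Q_1, Z^2\phi - Q_2)$, setting $Y = Z = 0$ gives $0$ and differentiating, the $Y^2$-coefficient at $Z = 0$ equals $-\mathrm{Res}_x(Q_1, Q_2)$ — and this I would verify by the product formula $\mathrm{Res}_x(f_1, f_2)|_{Z=0} = \mathrm{Res}_x(f_1, -Q_2) = \prod_{i}(Y^2\phi(\beta_i) - Q_1(\beta_i))$ whose constant term is $\prod_i Q_1(\beta_i) = \mathrm{Res}_x(Q_1, Q_2)$ — wait, that is the constant term, which should be $0$; so in fact I must be more careful. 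The correct statement: $\mathrm{Res}_x(f_1, -Q_2)$ has constant-in-$Y$ term $\prod_i(-Q_1(\beta_i)) = \mathrm{Res}_x(Q_1, Q_2)$, and since $f$ has no constant term, this must be cancelled by the $Z$-part — meaning $c_{02} = -(\text{that constant}) = \mathrm{Res}_x(Q_1, Q_2)$ after accounting for $f|_{Y=0} + f|_{Z=0} - f|_{Y=Z=0}$ structure. So I would run the symmetric computation: $f|_{Y=0} = \mathrm{Res}_x(-Q_1, f_2)$ has constant term $\mathrm{Res}_x(Q_1, Q_2)$ giving $c_{02} = \mathrm{Res}_x(Q_1,Q_2)$, and $f|_{Z=0} = \mathrm{Res}_x(f_1, -Q_2)$ has constant term $\mathrm{Res}_x(Q_1,Q_2)$ which must equal $-c_{20}$ since $f$'s genuine constant term is $0$ and $f|_{Z=0}$'s constant-in-$Y$ term is $c_{20}\cdot 0 + (\text{const}) $... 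I need $f|_{Z=0}$ as polynomial in $Y^2$ to have zero constant term too, forcing consistency — the resolution is that $\mathrm{Res}_x(f_1, -Q_2)$ as a polynomial in $Y$ has lowest-degree term of degree $2$, with $Y^2$-coefficient $-\mathrm{Res}_x(Q_1, Q_2)$, and I would confirm this via the product expansion keeping the two lowest-order terms.

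The final step is to conclude non-vanishing: for a quintuple of Howe type, $\alpha_1, \alpha_2, \alpha_3, \beta_2, \beta_3$ together with $0, 1$ are pairwise distinct, so the polynomials $\phi = x(x-1)(x-\alpha_1)$, $Q_1 = (x-\alpha_2)(x-\alpha_3)$, $Q_2 = (x-\beta_2)(x-\beta_3)$ are pairwise coprime in $k[x]$; hence $\mathrm{Res}_x(\phi, Q_1)$, $\mathrm{Res}_x(\phi, Q_2)$, $\mathrm{Res}_x(Q_1, Q_2)$ are all nonzero. Therefore $c_{60}, c_{06}, c_{20}, c_{02}$ are all nonzero, and since $c_{60} \neq 0$ the total degree of $f$ is exactly $6$. \textbf{The main obstacle} I anticipate is getting all the signs right in parts (1) and (2) — the resultant's antisymmetry $\mathrm{Res}_x(f_1, f_2) = (-1)^{\deg f_1 \cdot \deg f_2}\mathrm{Res}_x(f_2, f_1) = -\mathrm{Res}_x(f_2, f_1)$, combined with the sign $(-1)^{\deg_x f_1 \cdot \deg_x(Q_i)} = (-1)^{6} = +1$ from pulling a $-1$ out of $-Q_i$, together with the sign of the lowest/highest coefficient extraction — and I would pin these down once and for all by checking the explicit $6 \times 6$ Sylvester determinant in the second displayed form (after the row reductions shown in the text), where setting $Z = 0$ or $Y = 0$ makes three rows collapse and the determinant factors visibly into a $3\times 3$ block (giving a power of $Y^2$ or a resultant of $\phi$ with $Q_i$) times another block.
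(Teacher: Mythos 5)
Your overall strategy (identify $c_{60},c_{06},c_{20},c_{02}$ with resultants of $\phi,Q_1,Q_2$ and deduce non-vanishing from the pairwise coprimality of these three polynomials for a Howe-type quintuple) is sound, and the coprimality conclusion is exactly right. However, the central extraction step has a genuine gap: you identify $f(Y,0)$ with $\mathrm{Res}_x(f_1,-Q_2)$, but $f$ is the $6\times 6$ Sylvester determinant of two \emph{formal cubics}, and at $Z=0$ the leading coefficient $Z^2$ of $f_2$ vanishes, so the resultant does not commute with this specialization. The correct identity (obtained, e.g., by expanding the specialized determinant along its first column, whose only nonzero entry is $Y^2$) is $f(Y,0)=Y^2\,\mathrm{Res}_{x}(f_1,-Q_2)=-\,Y^2\prod_{i=2,3}\bigl(Y^2\phi(\beta_i)-Q_1(\beta_i)\bigr)$, where the resultant on the right is taken with degrees $(3,2)$. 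The missing factor $Y^2$ is precisely what resolves the contradiction you ran into: $\mathrm{Res}_x(f_1,-Q_2)$ itself genuinely has the nonzero constant term $-Q_1(\beta_2)Q_1(\beta_3)=-\mathrm{Res}_x(Q_1,Q_2)$, so your proposed ``resolution'' (that its lowest $Y$-degree is $2$) is false; it is the extra $Y^2$ that makes $f(Y,0)$ constant-free and yields $c_{60}=-\phi(\beta_2)\phi(\beta_3)=-\mathrm{Res}_x(\phi,Q_2)$ and $c_{20}=-Q_1(\beta_2)Q_1(\beta_3)=-\mathrm{Res}_x(Q_1,Q_2)$ in one stroke. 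There is also a sign slip: pulling $-1$ out of the second argument gives $\mathrm{Res}_x(f_1,-Q_2)=(-1)^{\deg_x f_1}\mathrm{Res}_x(f_1,Q_2)=-\mathrm{Res}_x(f_1,Q_2)$, not $(-1)^{2\cdot 3}=+1$. The specialization $Y=0$ needs the companion degree-drop formula for the \emph{first} argument, which carries the factor $\mathrm{lc}_x(f_2)=Z^2$ and an additional sign; worked out, $f(0,Z)=Z^2\prod_{i=2,3}\bigl(Z^2\phi(\alpha_i)-Q_2(\alpha_i)\bigr)$, giving $c_{06}=\mathrm{Res}_x(\phi,Q_1)$ and $c_{02}=\mathrm{Res}_x(Q_1,Q_2)$.

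With these corrections your route does prove the lemma, and it is genuinely different from the paper's: the paper never specializes, but performs column reductions on the $6\times 6$ Sylvester determinant and reads $c_{60}$ (resp.\ $c_{20}$) off as an explicit $3\times 3$ (resp.\ $4\times 4$) minor, which it then recognizes as, up to sign, the $5\times 5$ (resp.\ $4\times 4$) Sylvester determinant of $(\phi,Q_2)$ (resp.\ $(Q_1,Q_2)$). Your specialization-plus-product-formula argument is more conceptual and produces all four coefficients from the two restrictions $f(Y,0)$ and $f(0,Z)$, but it stands or falls on the degree-drop behavior of the Sylvester resultant under specialization, which your write-up does not handle correctly; your closing remark about the determinant ``collapsing'' at $Z=0$ points toward the right fix (though it is a single entry $Y^2$ times a $5\times 5$ minor, not a $3\times 3$ block), so the repair is short once that identity is stated and proved.
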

\begin{proof}
   The $Y^6$-coefficient in $\mathrm{Res}_x(f_1,f_2)$ is
   \[
   \begin{vmatrix}
-\rho_2+\sigma_2+\sigma_1(\rho_1-\sigma_1) & -\sigma_3-\sigma_2(\rho_1-\sigma_1) & \sigma_3(\rho_1-\sigma_1)\\
 \rho_1-\sigma_1 & -\rho_2+\sigma_2 & -\sigma_3\\
 -1 & \rho_1 & -\rho_2 
\end{vmatrix},
   \]
   which is equal to 
   \[
   -
\begin{vmatrix}
1 & - \sigma_1 & \sigma_2 & - \sigma_3 & 0\\
0 & 1 & - \sigma_1 & \sigma_2 & - \sigma_3\\
1 & -\rho_1 & \rho_2 & 0 & 0 \\
0 & 1 & -\rho_1 & \rho_2 & 0 \\
0 & 0 & 1 & -\rho_1 & \rho_2 
\end{vmatrix}\\
= - \mathrm{Res}_x(\phi,Q_2) .
   \]
A similar computation shows $c_{06} = \mathrm{Res}_x (\phi, Q_1)$.
Moreover, the $Y^2$-coefficient in $\mathrm{Res}_x(f_1,f_2)$ is computed as
\[
-
\begin{vmatrix}
 -1 & \tau_1-\sigma_1 & -\tau_2+\sigma_2  & -\sigma_3\\
 0 & -1 & \tau_1 & -\tau_2 \\
 -1 & \rho_1-\sigma_1 & -\rho_2+\sigma_2 & -\sigma_3\\
  0 & -1 & \rho_1 & -\rho_2 
\end{vmatrix}
= -\begin{vmatrix}
 1 & -\tau_1 & \tau_2  & 0\\
 0 & 1 & -\tau_1 & \tau_2 \\
 1 & -\rho_1 & \rho_2 & 0\\
  0 & 1 & -\rho_1 & \rho_2 
\end{vmatrix}
= -\mathrm{Res}_x (Q_1,Q_2).
\]
By a similar computation, we obtain $c_{02} = \mathrm{Res}_x (Q_1,Q_2)$, as desired.
\end{proof}

\subsection{Proof of Theorem \ref{thm:main1}}

The sextic $f(Y,Z)$ constructed in the previous subsection is absolutely irreducible, which we will prove in Subsection \ref{subsec:irr} below.
In this subsection, we shall prove that $C_1 \times_{\mathbb{P}^1} C_2$ is birationally equivalent to the plane sextic curve $C :f(Y,Z) = 0$, from which we conclude all the assertions of Theorem \ref{thm:main1}.
First, for any point $(x,y_1,y_2)$ on $C_1 \times_{\mathbb{P}^1} C_2$, it is straightforward that the corresponding point $(Y,Z)$ lies on $C$, whence we obtain a rational map:
\[
\Phi : C_1 \times_{\mathbb{P}^1} C_2 \dashrightarrow C \ ; \ (x,y_1,y_2) \mapsto \left( \frac{y_1}{\phi(x)}, \frac{y_2}{\phi(x)} \right),
\]
which is well-defined over all the points $(x,y_1,y_2)$ in $C_1 \times_{\mathbb{P}^1}C_2$ with $\phi(x) \neq 0$.
We can also construct the inverse rational map as follows:
For each point $(Y,Z)$ on $C$, it follows from $\mathrm{Res}_x (f_1(x,Y,Z),f_2(x,Y,Z)) = 0$ that there exists a common root of the univariate polynomials $f_1(x,Y,Z)$ and $f_2(x,Y,Z)$.
Choosing such a root $x$, we define a map
\[
C \longrightarrow C_1 \times_{\mathbb{P}^1} C_2 \ ; \ (Y,Z) \mapsto \left( x, \phi(x) Y, \phi(x) Z \right).
\]
If $x$ is unique, i.e., the gcd of $f_1(x,Y,Z)$ and $f_2(x,Y,Z)$ is linear in $x$, then clearly $x$ is represented as a rational function of $Y$ and $Z$.
Here, we determine a sufficient condition on $Y$ and $Z$ such that $x$ is unique, by computing the gcd of $f_1(x,Y,Z)$ and $f_2(x,Y,Z)$ directly:
First, we divide $f_2$ by $f_1$, say
\[
r_1:=Y^2 f_2 - Z^2 f_1 = Z^2 Q_1 - Y^2 Q_2 = - (Y^2 - Z^2) x^2 + (\rho_1 Y^2 - \tau_1 Z^2)x - (\rho_2 Y^2 - \tau_2 Z^2)
\]
is the reminder.
Next, we divide $f_1$ by $r_1 = Z^2 Q_1 - Y^2 Q_2$:
\[
\begin{aligned}
      r_2:= & (Y^2 - Z^2) f_1 + Y^2 x (Z^2 Q_1 - Y^2 Q_2)\\
      = & ( - (Y^2-Z^2) (1 + \sigma_1 Y^2) + Y^2 (\rho_1 Y^2 - \tau_1 Z^2) ) x^2 \\
      & + ((Y^2 - Z^2)(\tau_1 + \sigma_2 Y^2) - Y^2 (\rho_2 Y^2 - \tau_2 Z^2)) x - (Y^2-Z^2)(\tau_2 + \sigma_3 Y^2)\\
      = & - ( (\sigma_1 - \rho_1)Y^4 - (\sigma_1-\tau_1) Y^2 Z^2 + Y^2 - Z^2 ) x^2 \\
      & + ( (\sigma_2 -\rho_2)Y^4 -(\sigma_2-\tau_2) Y^2 Z^2 + \tau_1 Y^2 - \tau_1 Z^2)x -(Y^2-Z^2)(\tau_2 + \sigma_3 Y^2),
\end{aligned}
\]
which is further reduced by $r_1$ into
\[
r_3 := (Y^2 - Z^2)r_2 - ( (\sigma_1 - \rho_1)Y^4 - (\sigma_1-\tau_1) Y^2 Z^2 + Y^2 - Z^2 ) r_1 .
\]
Here, $r_3$ is of the form $h_1 x + h_2$ with $h_1,h_2 \in k[Y,Z]$.
In particular, we can compute $h_1$ as
\[
\begin{aligned}
    h_1 = &  (Y^2 - Z^2) ( (\sigma_2 -\rho_2)Y^4 -(\sigma_2-\tau_2) Y^2 Z^2 + \tau_1 Y^2 - \tau_1 Z^2)\\
    & -  ( (\sigma_1 - \rho_1)Y^4 - (\sigma_1-\tau_1) Y^2 Z^2 + Y^2 - Z^2 )(\rho_1 Y^2 - \tau_1 Z^2)\\
    =& (-\sigma_1 \rho_1 + \sigma_2 + \rho_1^2 - \rho_2) Y^6 + (\sigma_1 \tau_1 + \sigma_1 \rho_1 - 2 \sigma_2 - 2 \tau_1 \rho_1 + \tau_2 + \rho_2) Y^4 Z^2  \\
    & + (\tau_1 - \rho_1) Y^4 + (-\sigma_1 \tau_1 + \sigma_2 + \tau_1^2 - \tau_2) Y^2 Z^4 + (-\tau_1 + \rho_1) Y^2 Z^2 .
\end{aligned}
\]
\if 0
\[
\begin{aligned}
    h_2 = & (Y^2 - Z^2)^2 (\tau_2 + \sigma_3 Y^2) \\
    &- ( (\sigma_1 - \rho_1)Y^4 - (\sigma_1-\tau_1) Y^2 Z^2 + Y^2 - Z^2 )(\rho_2 Y^2 - \tau_2 Z^2)\\
    =&  (-\sigma_1 \rho_2 + \sigma_3 + \rho_1 \rho_2) Y^6 + (\sigma_1 \tau_2 + \sigma_1 \rho_2 - 2 \sigma_3 - \tau_1 \rho_2 - \tau_2 \rho_1) Y^4 Z^2\\
    & + (\tau_2 - \rho_2) Y^4 + (-\sigma_1 \tau_2 + \sigma_3 + \tau_1 \tau_2) Y^2 Z^4 + (-\tau_2 + \rho_2) Y^2 Z^2
\end{aligned}
\]
\begin{eqnarray*}
q_0 &= & -(\alpha_1 - \beta_2  - \beta_3 +1) \beta_2\beta_3 Y^6 + (\alpha_1 \alpha_2 \alpha_3 + \alpha_1 \beta_2 \beta_3 - \alpha_2 \alpha_3 \beta_2 - \alpha_2 \alpha_3 \beta_3\\
    & & + \alpha_2 \alpha_3 - \alpha_2 \beta_2 \beta_3 - \alpha_3 \beta_2 \beta_3 + \beta_2 \beta_3) Y^4 Z^2 + (\alpha_2 \alpha_3 - \beta_2 \beta_3)Y^4 \\
    & & - (\alpha_1 - \alpha_2 - \alpha_3 +1) \alpha_2 \alpha_3 Y^2 Z^4 - (\alpha_2 \alpha_3 - \beta_2 \beta_3) Y^2 Z^2
\end{eqnarray*}
\fi

\begin{lemma}\label{lem:h1}
    With notation as above, $h_1$ is not zero as a polynomial in $Y$ and $Z$, i.e., it does not hold
    \begin{eqnarray*}
\begin{cases}
    F_1 :=  -\sigma_1 \rho_1 + \sigma_2 + \rho_1^2 - \rho_2 = 0,\\
    F_2:= \sigma_1 \tau_1 + \sigma_1 \rho_1 - 2 \sigma_2 - 2 \tau_1 \rho_1 + \tau_2 + \rho_2= 0,\\
    F_3:=\tau_1 - \rho_1,\\
    F_4:=-\sigma_1 \tau_1 + \sigma_2 + \tau_1^2 - \tau_2 = 0.
\end{cases}
\end{eqnarray*}
\end{lemma}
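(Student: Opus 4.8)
The plan is to argue by contradiction: suppose $h_1$ vanishes identically, so that every coefficient of $h_1$ is zero; in particular $F_1 = F_3 = F_4 = 0$ (the $Y^4Z^2$-coefficient of $h_1$ is $F_2$ and its $Y^2Z^2$-coefficient is $-F_3$, but these will not be needed). The first observation I would record is that $F_1$ and $F_4$ have exactly the same shape: putting $g(u_1,u_2) := -\sigma_1 u_1 + \sigma_2 + u_1^2 - u_2$, one has $F_1 = g(\rho_1,\rho_2)$ and $F_4 = g(\tau_1,\tau_2)$.

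Next, from $F_3 = \tau_1 - \rho_1 = 0$ we get $\tau_1 = \rho_1$, and then $F_4 - F_1 = (\tau_1^2 - \rho_1^2) - \sigma_1(\tau_1-\rho_1) - (\tau_2 - \rho_2) = \rho_2 - \tau_2$, so $F_1 = F_4 = 0$ forces $\tau_2 = \rho_2$ as well. Translating back to the defining parameters via $\tau_1 = \alpha_2+\alpha_3$, $\tau_2 = \alpha_2\alpha_3$, $\rho_1 = \beta_2+\beta_3$, $\rho_2 = \beta_2\beta_3$, this says that the monic quadratics $Q_1 = x^2 - \tau_1 x + \tau_2$ and $Q_2 = x^2 - \rho_1 x + \rho_2$ coincide; comparing their root sets gives $\{\alpha_2,\alpha_3\} = \{\beta_2,\beta_3\}$, hence $\alpha_2 \in \{\beta_2,\beta_3\}$. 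This contradicts the hypothesis that $(\alpha_1,\alpha_2,\alpha_3,\beta_2,\beta_3)$ is of Howe type, in which $\alpha_1,\alpha_2,\alpha_3,\beta_2,\beta_3$ are pairwise distinct. Hence $h_1 \neq 0$.

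I do not expect a real obstacle here; the only points needing care are matching the coefficients of $h_1$ to the $F_i$ correctly and noticing that $F_2$ is irrelevant to the argument. As a sanity check (and an alternative route), one could instead compute a Gr\"obner basis of the ideal $\langle F_1,F_2,F_3,F_4\rangle$ in $\mathbb{Q}[\alpha_1,\alpha_2,\alpha_3,\beta_2,\beta_3]$ and verify that its vanishing locus forces two of the five parameters to coincide; but the short elimination above avoids any machine computation and is self-contained.
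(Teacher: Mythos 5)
Your proof is correct, and it takes a genuinely different route from the paper. The paper works ideal-theoretically: it notes $F_4 = F_3^2 - F_1 - F_2$, then exhibits (heuristically, via a Gr\"obner basis computation in Magma, later justified by Buchberger's first criterion) the combination $G_2 = -2F_1 + (\alpha_1+\alpha_3-2\beta_2-2\beta_3+1)F_3 - F_2 = (\alpha_3-\beta_2)(\alpha_3-\beta_3)$ lying in $I = \langle F_1,F_2,F_3,F_4\rangle$, so any common zero would force $\alpha_3 \in \{\beta_2,\beta_3\}$, contradicting the Howe-type hypothesis. You instead use only the three coefficients $F_1$, $F_3$, $F_4$ (the $Y^6$-, $Y^4$- and $Y^2Z^4$-coefficients of $h_1$, correctly identified): $F_3=0$ gives $\tau_1=\rho_1$, and then $F_4-F_1 = \rho_2-\tau_2$ gives $\tau_2=\rho_2$, hence $Q_1=Q_2$ and $\{\alpha_2,\alpha_3\}=\{\beta_2,\beta_3\}$, again contradicting pairwise distinctness. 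What your argument buys is self-containedness and transparency: no machine-assisted discovery, no appeal to $F_2$ at all, and in fact a slightly stronger conclusion (the vanishing of just $F_1,F_3,F_4$ already forces $\{\alpha_2,\alpha_3\}=\{\beta_2,\beta_3\}$, not merely $\alpha_3\in\{\beta_2,\beta_3\}$). What the paper's route buys is the explicit reduced Gr\"obner basis $\{-F_1,G_2,F_3\}$ of $I$, which is extra structural information about the ideal, though it is not needed for the lemma itself.
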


\begin{proof}
Regarding $\alpha_1$, $\alpha_2$, $\alpha_3$, $\beta_2$, and $\beta_3$ as variables, we consider the ideal $I := \langle F_1,F_2,F_3,F_4 \rangle \subset k[\alpha_1,\alpha_2,\alpha_3,\beta_2,\beta_3]$.
It suffices to prove that the affine algebraic set $V(I)$ in $\mathbb{A}^5(k)$ has no element $(\alpha_1,\alpha_2,\alpha_3,\beta_2,\beta_3)$ such that $\alpha_1$, $\alpha_2$, $\alpha_3$, $\beta_2$, and $\beta_3$ are pairwise distinct.
One can check that $F_4 = F_3^2 - F_1 - F_2$, and thus $I = \langle F_1, F_2, F_3 \rangle$.
It also follows that $I$ contains an element
\begin{eqnarray*}
    G_2 &:=& -2 F_1+(\alpha_1+\alpha_3-2 \beta_2-2 \beta_3+1)F_3-F_2\\
    &=& \alpha_3^2 - \alpha_3 \beta_2 - \alpha_3 \beta_3 + \beta_2 \beta_3 \\
    &=&(\alpha_3-\beta_2)(\alpha_3-\beta_3).
\end{eqnarray*}
Therefore, we have $I = \langle F_1,G_2,F_3 \rangle$, and $G_2$ is not zero as long as $\alpha_3 \neq \beta_2$ and $\alpha_3 \neq \beta_3$, as desired.
\end{proof}

\begin{remark}
We heuristically found the polynomial $G_2$ in the proof of Lemma \ref{lem:h1} as an element of a Gr\"{o}bner basis of $I$; replacing $k$ by $\mathbb{Q}$, we computed the Gr\"{o}bner basis with Magma.
In fact, we can prove theoretically that $\{-F_1,G_2,F_3\}$ is a Gr\"{o}bner basis of $I$ with respect to the lexicographical order with $\alpha_1 > \alpha_2 > \alpha_3 > \beta_2 > \beta_3$, as follows:
The leading monomials of $-F_1$, $G_2$, and $F_3$ are respectively $\alpha_1 \beta_2$, $\alpha_3^2$, and $\alpha_2$, which are pairwise disjoint, i.e., they have no variable in common.
Buchberger's first criterion~\cite[Lemma 5.66]{Becker1993} implies that the $S$-polynomial of two polynomials having disjoint leading monomials is reduced into zero by themselves, and hence $\{ -F_1, G_2, F_3 \}$ is a (in fact the reduced) Gr\"{o}bner basis of $I$.
\end{remark}%

By Lemma \ref{lem:h1}, it is straightforward that $f_1(x,Y,Z)$ and $f_2 (x, Y,Z)$ has a unique common root unless $h_1 (Y,Z) = 0$.
Moreover, we can prove the following:
\begin{lemma}\label{lem:proper}
    The locus $h_1 (Y,Z) = 0$ in $C$ is a proper subset of $C$.
\end{lemma}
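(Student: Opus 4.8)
The plan is to reduce the assertion to a divisibility question in $k[Y,Z]$. Recall that $C$ is the plane sextic $\{f=0\}$, and that $f$ is absolutely irreducible — this is proved, independently of the present subsection, in Subsection \ref{subsec:irr}. Consequently the vanishing ideal of $C$ is the prime ideal $(f)$, so $h_1$ vanishes identically on $C$ if and only if $f \mid h_1$ in $k[Y,Z]$; and if $f \nmid h_1$, then $V(h_1)\cap C$ is a proper Zariski-closed subset of the irreducible one-dimensional curve $C$, hence finite, which is exactly what is claimed. So it suffices to show $f \nmid h_1$.

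The key elementary observation I would exploit is that every monomial occurring in the explicit expression for $h_1$ is divisible by $Y^2$. Writing $F_1,\dots,F_4$ for the coefficients introduced in Lemma \ref{lem:h1}, this yields the factorisation
\[
h_1 \;=\; Y^2\bigl(F_1 Y^4 + F_2 Y^2 Z^2 + F_3 Y^2 + F_4 Z^4 - F_3 Z^2\bigr)\;=:\;Y^2\,g ,
\]
where $g \in k[Y,Z]$ has total degree at most $4$. Lemma \ref{lem:h1} says precisely that $(\alpha_1,\alpha_2,\alpha_3,\beta_2,\beta_3)$ is not a common zero of $F_1,\dots,F_4$, so $g$ is a nonzero polynomial.

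I would then suppose, for contradiction, that $f \mid h_1 = Y\cdot Y\cdot g$. Since $k[Y,Z]$ is a unique factorisation domain and $f$ is irreducible, $f$ must divide $Y$ or $g$. By Lemma \ref{lem:c60} the total degree of $f$ is exactly $6$ (its coefficients $c_{60},c_{06},c_{20},c_{02}$ being nonzero for every quintuple of Howe type); but $\deg Y = 1 < 6$ and $0 \le \deg g \le 4 < 6$ with $g \ne 0$, so $f$ divides neither. This contradiction proves $f \nmid h_1$, and the lemma follows.

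I do not anticipate a genuine obstacle: once the two inputs — absolute irreducibility of $f$ (Subsection \ref{subsec:irr}) and $\deg f = 6$ (Lemma \ref{lem:c60}) — are in hand, the argument is one short paragraph, and the "trick" is simply to pull out the $Y^2$ factor, which makes transparent why $h_1$ cannot vanish on all of $C$. The only point requiring care is to confirm that invoking the irreducibility of $f$ here is not circular: the proof in Subsection \ref{subsec:irr} relies only on the explicit shape of $f$ and the factorisation lemmas of Section \ref{sec:pre}, and uses neither the birational equivalence nor Lemma \ref{lem:proper} established in the present subsection.
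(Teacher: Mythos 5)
Your proof is correct and follows essentially the same route as the paper: both reduce the claim, via Hilbert's Nullstellensatz and the absolute irreducibility of $f$ (proved independently in Subsection \ref{subsec:irr}), to showing $f \nmid h_1$, using Lemma \ref{lem:h1} to ensure $h_1 \neq 0$. The only difference is the last micro-step: the paper notes that $f \mid h_1$ would force $h_1 = c f$ and compares $Z^6$-coefficients (zero for $h_1$, $c\,c_{06}\neq 0$ for $cf$ by Lemma \ref{lem:c60}), while you factor $h_1 = Y^2 g$ with $g \neq 0$ of degree at most $4 < 6 = \deg f$; both checks are valid.
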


\begin{proof}
    If the locus coincides with $C$, it follows from Hilbert's Nullstellensat and the absolute irreducibility of $f$ that we can write $h_1 = c f$ for some $c \in k$.
    By Lemmas \ref{lem:h1} and \ref{lem:c60}, the $Z^6$-coefficient of $c f$ is not zero, whereas that of $h_1$ is zero, a contradiction. 
\end{proof}

Note that the number of points at infinity of $C$ is finite. Thus, excluding the locus $h_1 (Y,Z) = 0$ and points at infinity from $C$, we obtain a rational map
\[
\Psi : C \dashrightarrow C_1 \times_{\mathbb{P}^1} C_2 \ ; \ (Y,Z) \mapsto \left( x, \phi(x) Y, \phi(x) Z \right)
\]
with $x = - h_2(Y,Z)/h_1(Y,Z)$, which is a unique common root of $f_1(x,Y,Z)$ and $f_2 (x, Y,Z)$.
We claim that $\phi (x) \neq 0$ for each image $(x, \phi(x)Y, \phi(x) Z)$.
Indeed, if $\phi(x) = 0$, then it follows from $f_1(x,Y,Z) = f_2(x,Y,Z) = 0$ that $Q_1(x)=Q_2(x)=0$, a contradiction.
Therefore, $\Phi\circ\Psi$ is well-defined, and it is clearly equal to $\mathrm{\rm id}_C$ as rational maps.
Here, we also have the following:

\begin{theorem}
    With notation as above, $\Psi\circ \Phi$ is well-defined, and it is equal to $\mathrm{\rm id}_{C_1 \times_{\mathbb{P}^1}C_2}$ as rational maps.
    Hence, $C_1 \times_{\mathbb{P}^1} C_2$ and $C$ are birationally equivalent to each other.
\end{theorem}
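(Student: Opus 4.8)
The plan is to verify that the already-constructed rational maps $\Phi$ and $\Psi$ are mutually inverse as rational maps, thereby promoting the one-sided composition computed just before the statement into a genuine birational equivalence. Since $\Phi \circ \Psi = \mathrm{id}_C$ has already been established, it remains only to show $\Psi \circ \Phi = \mathrm{id}_{C_1 \times_{\mathbb{P}^1} C_2}$ on a dense open subset; once both composites are identities, $\Phi$ and $\Psi$ are birational. The key point to nail down first is that $\Phi$ is dominant, i.e.\ not constant; this follows because $\Psi$ is well-defined on a nonempty open set (Lemma \ref{lem:proper} guarantees $h_1 \not\equiv 0$ on $C$) and $\Phi \circ \Psi = \mathrm{id}_C$ forces the image of $\Phi$ to be dense in $C$. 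Hence $\Phi$ is a dominant rational map between irreducible curves, and it suffices to check $\Psi \circ \Phi = \mathrm{id}$ at a general point.

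Next I would compute $\Psi \circ \Phi$ explicitly at a general point $(x, y_1, y_2)$ of $C_1 \times_{\mathbb{P}^1} C_2$ with $\phi(x) \neq 0$. Setting $Y = y_1/\phi(x)$ and $Z = y_2/\phi(x)$, the relations $y_1^2 = \phi(x) Q_1(x)$ and $y_2^2 = \phi(x) Q_2(x)$ give $\phi(x) Y^2 = Q_1(x)$ and $\phi(x) Z^2 = Q_2(x)$, so that $x$ is a common root of $f_1(\,\cdot\,, Y, Z)$ and $f_2(\,\cdot\,, Y, Z)$. By Lemma \ref{lem:h1}, this common root is unique for $(Y,Z)$ outside the proper closed subset $\{h_1 = 0\}$, and it equals $x_0(Y,Z) := -h_2(Y,Z)/h_1(Y,Z)$; therefore the $x$-coordinate returned by $\Psi$ is exactly $x$. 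Then $\Psi(\Phi(x,y_1,y_2)) = (x, \phi(x) Y, \phi(x) Z) = (x, y_1, y_2)$, so $\Psi \circ \Phi = \mathrm{id}$ as rational maps on $C_1 \times_{\mathbb{P}^1} C_2$.

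The last step is to confirm that $\Psi \circ \Phi$ is genuinely \emph{well-defined} as a rational map, not merely formally equal to the identity: one must check that $\Phi$ maps a dense open subset of $C_1 \times_{\mathbb{P}^1} C_2$ into the open locus of $C$ where $\Psi$ is defined (away from $h_1 = 0$ and from the points at infinity). Since $\{h_1 = 0\}$ is a proper closed subset of $C$ by Lemma \ref{lem:proper}, its preimage under the dominant map $\Phi$ is a proper closed subset of $C_1 \times_{\mathbb{P}^1} C_2$, and the same holds for the preimage of the finite set of points at infinity together with the locus $\phi(x) = 0$; removing all of these leaves a dense open set on which the composite makes sense. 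Combining $\Phi \circ \Psi = \mathrm{id}_C$ and $\Psi \circ \Phi = \mathrm{id}_{C_1 \times_{\mathbb{P}^1} C_2}$ yields the asserted birational equivalence. The main obstacle is purely bookkeeping: keeping track of exactly which closed subsets must be excluded so that both composites are simultaneously defined, and invoking the absolute irreducibility of $f$ (proved in Subsection \ref{subsec:irr}) to know that $C$ is an irreducible curve so that ``dense open'' has its usual meaning; the algebraic identities themselves are immediate from the construction of $f_1, f_2, h_1, h_2$.
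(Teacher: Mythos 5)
Your proof is correct, but it reaches the key existence step by a different route than the paper. The crux in both arguments is to produce a point $(x,y_1,y_2)$ with $\phi(x)\neq 0$ whose image under $\Phi$ avoids the locus $h_1=0$ (after which the uniqueness of the common root of $f_1(\cdot,Y,Z)=f_2(\cdot,Y,Z)=0$, namely $x=-h_2/h_1$, forces $\Psi\circ\Phi=\mathrm{id}$ exactly as you say). The paper obtains this point by factoring $\Phi$ through $V(f_1,f_2)\subset\mathbb{A}^3$ as $\Phi_2\circ\Phi_1$, checking that $\Phi_1$ is a bijection from $U_1=\{\phi(x)\neq0\}$ onto $U_2=\{(x,Y,Z)\in V(f_1,f_2):(Y,Z)\neq(0,0)\}$ and that $\Phi_2$ surjects onto $U_3=\{(Y,Z)\in C:(Y,Z)\neq(0,0)\}$; since $h_1$ vanishes at only finitely many points of $C$ (Lemma \ref{lem:proper} plus irreducibility), a suitable preimage exists. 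You instead deduce dominance of $\Phi$ formally from the identity $\Phi\circ\Psi=\mathrm{id}_C$ established just before the theorem (the image of $\Phi$ contains the dense open set on which $\Psi$ is defined), and then pull back the finite sets $\{h_1=0\}\cap C$, the points at infinity, and $\{\phi(x)=0\}$ to proper closed subsets of the irreducible fiber product. Both arguments are sound; yours is shorter and more formal, leaning on general facts about dominant maps between irreducible curves and on the previously verified composite, while the paper's explicit factorization buys a slightly stronger and self-contained statement (surjectivity of $\Phi$ onto the punctured affine curve $U_3$), which does not depend on having first checked that $\phi(x)\neq 0$ at the images of $\Psi$. Your bookkeeping of the excluded loci and the appeal to the absolute irreducibility of $f$ (Proposition \ref{prop:irr}) are exactly what is needed for the dominance and preimage arguments to go through.
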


\begin{proof}
    The numbers of points at infinity of $C_1 \times_{\mathbb{P}^1} C_2$ and $C$ are finite. Therefore, it suffices to consider the affine models of $C_1 \times_{\mathbb{P}^1} C_2$ and $C$ for proving the birationality of them.
    
    It suffices to show that there exists a point $(x,y_1,y_2) \in C_1 \times_{\mathbb{P}^1} C_2$ with $\phi(x) \neq 0$ such that $h_1(Y,Z) \neq 0$ for $Y =\displaystyle \frac{y_1}{\phi(x)}$ and $Z = \displaystyle\frac{y_2}{\phi(x)}$. 
    If so, $\Psi\circ \Phi$ is well-defined on a non-empty open set $(\Psi\circ\Phi)^{-1}(C_1 \times_{\mathbb{P}^1} C_2)$, and it follows from the uniqueness of the common root of $f_1(x,Y,Z) = f_2 (x,Y,Z) = 0$ that $\Psi\circ \Phi = {\rm id}_{C_1 \times_{\mathbb{P}^1}C_2}$.

    To prove the existence of such a point, we consider $\Phi$ as the composition of $\Phi_1 : C_1 \times_{\mathbb{P}^1} C_2 \dashrightarrow V(f_1,f_2)$ and $\Phi_2 : V(f_1,f_2) \dashrightarrow C$, where $V(f_1,f_2) \subset \mathbb{A}^3$.
    More precisely,
    \begin{itemize}
        \item $\Phi_1$ is defined at any point on $U_1 := \{ (x,y_1,y_2) \in C_1 \times_{\mathbb{P}^1} C_2:\phi(x) \neq 0 \}$ by $(x,y_1,y_2) \to (x,\frac{y_1}{\phi(x)},\frac{y_2}{\phi(x)})$, and
        \item $\Phi_2$ is defined at any point on $U_2:=\{(x,Y,Z) \in V(f_1,f_2) : (Y, Z) \neq (0,0)\}$ by $(x,Y,Z) \mapsto (Y,Z)$.
    \end{itemize}
    It is straightforward to see that $\Phi_1$ defines a bijective map from $U_1$ to $U_2$, and that $\Phi_2$ defines a surjective map from $U_2$ to $U_3 := \{ (Y,Z) \in C : (Y,Z) \neq (0,0) \}$.
    Since there are only finite points on $C$ satisfying $h_1=0$ by Lemma \ref{lem:proper}, it follows from the surjectiveness that there exists a desired point $(x,y_1,y_2)$.
\end{proof}


\section{Irreducibility and singularity analysis}\label{sec:IrrSing}

We use the same notation as in the previous section.
In this section, we first prove that our sextic $f(Y,Z)$ constructed in the previous section is irreducible over the algebraically closed field $k$, with the help of some computer calculations.
Subsequently, we classify singularities of the projective closure in $\mathbb{P}^2 = \mathrm{Proj}(k[Y,Z,X])$ of $f(Y,Z) = 0$, where $X$ is an extra variable for homogenization.

\subsection{Absolute irreducibility of our sextic}\label{subsec:irr}

We denote by $c_{ij}$ the coefficient of $Y^iZ^j$ in $f(Y,Z)$.
Recall that our sextic $f(Y,Z)$ is of the form
\[
c_{60}Y^6 + c_{42} Y^4Z^2  + c_{40} Y^4  + c_{24}Y^2 Z^4 + c_{22}Y^2 Z^2 + c_{20}Y^2  + c_{06}Z^6 + c_{04} Z^4 + c_{02} Z^2,
\]
where $c_{60} \neq 0$ for every quintuple $(\alpha_1,\alpha_2,\alpha_3,\beta_2,\beta_3)$ of Howe type.

Before discussing the absolute irreducibility of $f(Y,Z)$, we prove a lemma, which will be also used in Subsection \ref{subsec:sing} to analyze singularities.

\begin{lemma}\label{lem:p1p2}
With notation as above, we have the following:
\begin{enumerate}
    \item[(1)] $c_{40}^2 - 4 c_{60} c_{20}$ and $c_{04}^2 + 4 c_{20}c_{06}$ are factored respectively into $(\beta_2-\beta_3)^2p_1^2$ and $(\alpha_2 - \alpha_3)^2 p_2^2$ as polynomials with respect to $\alpha_i$'s and $\beta_j$'s, where
    \begin{eqnarray*}
        p_1 \!\!\!&\!=\!&\!\!\!\alpha_1 \alpha_2 \alpha_3 \beta_2 + \alpha_1 \alpha_2 \alpha_3 \beta_3 - \alpha_1 \alpha_2 \alpha_3 - \alpha_1 \alpha_2 \beta_2 \beta_3 - \alpha_1 \alpha_3 \beta_2 \beta_3 + \alpha_1 \beta_2 \beta_3\\
        &&+ \alpha_2  \alpha_3 \beta_2^2 - \alpha_2 \alpha_3 \beta_2 \beta_3 + \alpha_2 \alpha_3 \beta_2 - \alpha_2 \alpha_3 \beta_3^2 + \alpha_2 \alpha_3 \beta_3 + \alpha_2 \beta^2 \beta_3\\
        &&+ \alpha_2 \beta_2 \beta_3^2 - \alpha_2 \beta_2 \beta_3 + \alpha_3 \beta_2^2 \beta_3 + \alpha_3 \beta_2 \beta_3^2 - \alpha_3 \beta_2 \beta_3 - \beta_2^2 \beta_3^2,\\
        p_2 \!\!\!&\!=\!&\!\!\!\alpha_1 \alpha_2 \alpha_3 \beta_2 + \alpha_1 \alpha_2 \alpha_3 \beta_3 - \alpha_1 \alpha_2 \alpha_3 - \alpha_1 \alpha_2 \beta_2 \beta_3 - \alpha_1 \alpha_3 \beta_2 \beta_3 + \alpha_1 \beta_2 \beta_3\\
        &&+ \alpha_2^2 \alpha_3^2 - \alpha_2^2 \alpha_3 \beta_2 - \alpha_2^2 \alpha_3 \beta_3 + \alpha_2^2 \beta_2 \beta_3 - \alpha_2 \alpha_3^2 \beta_2 - \alpha_2 \alpha_3^2 \beta_3 \\
        && + \alpha_2 \alpha_3 \beta_2 \beta_3 + \alpha_2 \alpha_3 \beta_2 + \alpha_2 \alpha_3 \beta_3- \alpha_2 \beta_2 \beta_3 + \alpha_3^2 \beta_2 \beta_3 - \alpha_3 \beta_2 \beta_3.
    \end{eqnarray*}
    Moreover, it follows that
    \[
    p_2 - p_1 = (\alpha_2 - \beta_2)(\alpha_2 - \beta_3) (\alpha_3 - \beta_2)(\alpha_3 - \beta_3)  ,
    \]
    so that at least one of $c_{40}^2 - 4 c_{60} c_{20}$ and $c_{04}^2 + 4 c_{20}c_{06}$ is non-zero for every quintuple $(\alpha_1,\alpha_2,\alpha_3,\beta_2,\beta_3)$ of Howe type.
    \item[(2)] $d_1 := c_{60} + c_{42} + c_{24} + c_{06}$ and $d_2 := c_{40} + c_{22} + c_{04}$ are factored into
    \[
    \begin{aligned}
       d_1 =& -(\alpha_2 \alpha_3 - \beta_2 \beta_3)(\alpha_2 \alpha_3 - \alpha_2 - \alpha_3 - \beta_2 \beta_3 + \beta_2 + \beta_3)\\
        & \cdot (\alpha_1 (\alpha_2 + \alpha_3 - \beta_2 - \beta_3) - \alpha_2 \alpha_3 + \beta_2 \beta_3),\\
        d_2 =& -(\alpha_2 - \beta_3) (\alpha_3 - \beta_2) (\alpha_2 - \beta_3) (\alpha_2 - \beta_2) (\alpha_2 + \alpha_3 - \beta_2 - \beta_3)
    \end{aligned}
    \]
    as polynomials with respect to $\alpha_i$'s and $\beta_j$'s.
    Moreover, at least one of $d_1$ and $d_2$ is non-zero for every quintuple $(\alpha_1,\alpha_2,\alpha_3,\beta_2,\beta_3)$ of Howe type.
\end{enumerate}
\end{lemma}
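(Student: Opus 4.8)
The plan is to read off the relevant coefficient combinations by specializing the resultant $f=\mathrm{Res}_x(f_1,f_2)$ along three convenient lines, where the Sylvester matrix degenerates and its value becomes a product over roots. For part (1), note that $f(Y,0)=Y^2\bigl(c_{60}Y^4+c_{40}Y^2+c_{20}\bigr)$ sees only the pure powers of $Y$, and that $f_2(x,Y,0)=-Q_2(x)$ has $x$-degree $2<3$; hence the Sylvester determinant equals, up to sign, the leading $x$-coefficient $Y^2$ of $f_1$ times the honest resultant $\mathrm{Res}_x(\phi Y^2-Q_1,\,Q_2)=\prod_{j\in\{2,3\}}\bigl(\phi(\beta_j)Y^2-Q_1(\beta_j)\bigr)$. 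Comparing coefficients (and using the values of $c_{60},c_{20}$ from Lemma \ref{lem:c60}) gives $c_{40}=\phi(\beta_2)Q_1(\beta_3)+\phi(\beta_3)Q_1(\beta_2)$ and $c_{60}c_{20}=\phi(\beta_2)Q_1(\beta_3)\cdot\phi(\beta_3)Q_1(\beta_2)$, so the identity $(a+b)^2-4ab=(a-b)^2$ yields $c_{40}^2-4c_{60}c_{20}=\bigl(\phi(\beta_2)Q_1(\beta_3)-\phi(\beta_3)Q_1(\beta_2)\bigr)^2$. This bracket vanishes at $\beta_2=\beta_3$, hence is divisible by $\beta_2-\beta_3$; dividing and expanding identifies the quotient, up to sign, with the stated $p_1$. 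The computation of $c_{04}^2+4c_{20}c_{06}$ is symmetric, via $f(0,Z)$ together with $c_{20}=-c_{02}$, producing $\bigl(\phi(\alpha_2)Q_2(\alpha_3)-\phi(\alpha_3)Q_2(\alpha_2)\bigr)^2=(\alpha_2-\alpha_3)^2p_2^2$.

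For part (2) I would specialize along $Y=Z$, where $f(Y,Y)=d_1Y^6+d_2Y^4$ (the $Y^2$-term drops out since $c_{20}+c_{02}=0$) and $f(Y,Y)=\mathrm{Res}_x(\phi Y^2-Q_1,\,\phi Y^2-Q_2)$. Subtracting the two arguments, $\phi Y^2-Q_2\equiv Q_1-Q_2\pmod{\phi Y^2-Q_1}$ with $Q_1-Q_2=(\rho_1-\tau_1)x-(\rho_2-\tau_2)$ linear (for $\rho_1\ne\tau_1$), so the resultant-reduction rule gives $f(Y,Y)=Y^4\,\mathrm{Res}_x(\phi Y^2-Q_1,\,Q_1-Q_2)=-Y^4(\rho_1-\tau_1)^3\bigl(\phi(x_0)Y^2-Q_1(x_0)\bigr)$ with $x_0=(\rho_2-\tau_2)/(\rho_1-\tau_1)$. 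Hence $d_1=-(\rho_1-\tau_1)^3\phi(x_0)$ and $d_2=(\rho_1-\tau_1)^3Q_1(x_0)$; a short computation shows $x_0-\alpha_i=(\alpha_i-\beta_2)(\alpha_i-\beta_3)/(\rho_1-\tau_1)$, and that $x_0$, $x_0-1$, $x_0-\alpha_1$ have precisely the numerators occurring in the statement, which clears all denominators and reproduces the claimed factorizations of $d_1$ and $d_2$ (the remaining locus $\rho_1=\tau_1$ being handled by Zariski density, as both sides are polynomial identities).

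The non-vanishing assertions are then short. For part (1): if both quantities vanished then, since $\beta_2\ne\beta_3$ and $\alpha_2\ne\alpha_3$, we would get $p_1=p_2=0$, hence $p_2-p_1=0$; but the identity $p_2-p_1=(\alpha_2-\beta_2)(\alpha_2-\beta_3)(\alpha_3-\beta_2)(\alpha_3-\beta_3)$ --- which I would verify by a direct expansion (equivalently, recognizing the right-hand side as $Q_2(\alpha_2)Q_2(\alpha_3)=Q_1(\beta_2)Q_1(\beta_3)=\mathrm{Res}_x(Q_1,Q_2)$, once the signs of the two square roots are fixed consistently) --- has a nonzero value because the $\alpha_i$'s and $\beta_j$'s are pairwise distinct. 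For part (2): the four linear factors $\alpha_i-\beta_j$ of $d_2$ are nonzero, so $d_2=0$ forces $\alpha_2+\alpha_3=\beta_2+\beta_3$, and substituting this equality into the three factors of $d_1$ collapses it to $(\alpha_2\alpha_3-\beta_2\beta_3)^3$; therefore $d_1=d_2=0$ would force both $\alpha_2+\alpha_3=\beta_2+\beta_3$ and $\alpha_2\alpha_3=\beta_2\beta_3$, i.e.\ $\{\alpha_2,\alpha_3\}=\{\beta_2,\beta_3\}$, contradicting that the quintuple is of Howe type.

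The main obstacle is bookkeeping rather than ideas: keeping track of the signs and leading-coefficient factors introduced when the Sylvester matrix degenerates under specialization, and confirming that the polynomials obtained after dividing the two perfect squares by $(\beta_2-\beta_3)^2$ and $(\alpha_2-\alpha_3)^2$ --- as well as the difference $p_2-p_1$ --- agree exactly with the expressions written in the statement. Each of these is a finite polynomial identity in the five variables $\alpha_1,\alpha_2,\alpha_3,\beta_2,\beta_3$ and can, if one prefers, simply be checked with a computer algebra system, as is done elsewhere in the paper.
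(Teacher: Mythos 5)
Your proposal is correct, but it takes a genuinely different route from the paper for the identities in (1) and the first half of (2): the paper simply verifies those factorizations by a computer calculation, whereas you derive them structurally by specializing the Sylvester determinant. Your three specializations all check out: at $Z=0$ the determinant degenerates to $Y^2$ times the resultant of $\phi Y^2-Q_1$ with $Q_2$, giving $c_{60}Y^4+c_{40}Y^2+c_{20}=\pm\bigl(\phi(\beta_2)Y^2-Q_1(\beta_2)\bigr)\bigl(\phi(\beta_3)Y^2-Q_1(\beta_3)\bigr)$ and hence $c_{40}^2-4c_{60}c_{20}=\bigl(\phi(\beta_2)Q_1(\beta_3)-\phi(\beta_3)Q_1(\beta_2)\bigr)^2$, an antisymmetric square whose quotient by $(\beta_2-\beta_3)^2$ must be $p_1^2$ (symmetrically for $p_2$ via $Y=0$ and $c_{02}=-c_{20}$); and at $Y=Z$ the reduction $\phi Y^2-Q_2\equiv Q_1-Q_2$ modulo $\phi Y^2-Q_1$ yields $d_1=-(\rho_1-\tau_1)^3\phi(x_0)$, $d_2=(\rho_1-\tau_1)^3Q_1(x_0)$, and since $(\rho_1-\tau_1)(x_0-\alpha_i)=(\alpha_i-\beta_2)(\alpha_i-\beta_3)$ this reproduces the claimed factorizations (indeed it shows the four factors of $d_2$ should be $(\alpha_2-\beta_2)(\alpha_2-\beta_3)(\alpha_3-\beta_2)(\alpha_3-\beta_3)$, confirming that the repeated factor $(\alpha_2-\beta_3)$ in the paper's display is a typo), with the excluded locus $\rho_1=\tau_1$ handled by Zariski density as you say. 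What your approach buys is that the computer-checkable content shrinks to small, transparent identities (the quotients equal $\pm p_1,\pm p_2$, and the expansion of $p_2-p_1$); what it costs is the sign and degeneracy bookkeeping you flag, plus the caveat that your parenthetical shortcut for $p_2-p_1$ via $\mathrm{Res}_x(Q_1,Q_2)$ still presupposes the explicit sign-normalized expressions for $p_1,p_2$, so it does not bypass the expansion. The non-vanishing arguments — $p_1=p_2=0$ contradicting the factorization of $p_2-p_1$, and $d_2=0$ forcing $\tau_1=\rho_1$, whence $d_1=(\alpha_2\alpha_3-\beta_2\beta_3)^3$ and $\{\alpha_2,\alpha_3\}=\{\beta_2,\beta_3\}$ — coincide exactly with the paper's proof of the second part of (2).
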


\begin{proof}
We can prove (1) and the first part of (2) also by a computer calculation.
To prove the second part of (2), assume for a contradiction that $d_1 = d_2  = 0$.
Since $(\alpha_1,\alpha_2,\alpha_3,\beta_2,\beta_3)$ is of Howe type, it follows from the factorization of $d_2$ that $\alpha_2 + \alpha_3 - \beta_2 - \beta_3 = 0$, and thus $d_1=(\alpha_2 \alpha_3 - \beta_2 \beta_3)^3 =0$.
Therefore, putting $\alpha = \alpha_2 + \alpha_3 = \beta_2 + \beta_3$ and $\beta = \alpha_2 \alpha_3 = \beta_2 \beta_3$, both the pairs $(\alpha_2,\alpha_3)$ and $(\beta_2,\beta_3)$ are the roots of $X^2 - \alpha X + \beta$, so that $\{\alpha_2, \alpha_3\} = \{ \beta_2,\beta_3\}$, a contradiction.
\end{proof}

By using Lemma \ref{lem:p1p2} together with Lemmas \ref{lem:iired1} -- \ref{lem:irred4}, we shall prove the absolute irreducibility of our sextic $f(Y,Z)$, in the following proposition:

\begin{proposition}\label{prop:irr}
The sextic $f(Y,Z)$ is irreducible over $k$.
\end{proposition}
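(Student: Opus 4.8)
The plan is to assume $f(Y,Z)$ is reducible over $k$ and to obtain a contradiction by matching the possible factorization shapes against the structural lemmas of this section. By Lemma~\ref{lem:c60} we have $c_{60},c_{06},c_{20},c_{02}\neq 0$ and $c_{20}=-c_{02}$, so $f$ satisfies the standing hypotheses of Lemmas~\ref{lem:iired1}--\ref{lem:irred4}. Since $c_{60}\in k^{\times}$ is the leading coefficient of $f$ viewed in $Y$ over $k[Z]$, any factorization of $f$ can be rewritten as $f=c_{60}\,G_1\cdots G_m$ with $m\geq 2$ and each $G_i\in k[Y,Z]$ irreducible and monic in $Y$ of degree $\geq 1$.

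First I would exclude a linear factor. If $f$ had one, Lemma~\ref{lem:iired1} would force $(Y^2-Z^2)\mid f$, hence the relations \eqref{eq:linear1}, i.e.\ $d_1:=c_{60}+c_{42}+c_{24}+c_{06}=0$ and $d_2:=c_{40}+c_{22}+c_{04}=0$; this contradicts Lemma~\ref{lem:p1p2}(2), which says that $d_1$ and $d_2$ cannot both vanish at a quintuple of Howe type (Definition~\ref{def:HoweType}). So every $G_i$ has degree $\geq 2$, and since their degrees sum to $6$ the only options are: (i) three irreducible quadratics, $f=c_{60}Q_1Q_2Q_3$; (ii) two irreducible cubics, $f=c_{60}H_1H_2$; (iii) an irreducible quadratic times an irreducible quartic, $f=c_{60}H_1H_2$.

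Case (i) is ruled out directly by Lemma~\ref{lem:irred2}. For case (ii), Lemma~\ref{lem:irred4} gives \eqref{eq:linear2}: $c_{40}^2-4c_{60}c_{20}=0$ and $c_{04}^2+4c_{06}c_{20}=0$. By Lemma~\ref{lem:p1p2}(1) these equal $(\beta_2-\beta_3)^2p_1^2$ and $(\alpha_2-\alpha_3)^2p_2^2$, and as $\beta_2\neq\beta_3$, $\alpha_2\neq\alpha_3$ for Howe-type quintuples this forces $p_1=p_2=0$, contradicting $p_2-p_1=(\alpha_2-\beta_2)(\alpha_2-\beta_3)(\alpha_3-\beta_2)(\alpha_3-\beta_3)\neq 0$. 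There remains case (iii); here Lemma~\ref{lem:irred3} puts $f$ in the explicit form \eqref{eq:A2}. I would then equate the nine coefficients of $c_{60}H_1H_2$ with the explicit polynomials $c_{2i,2j}(\alpha_1,\alpha_2,\alpha_3,\beta_2,\beta_3)$ coming from $f=\mathrm{Res}_x(f_1,f_2)$. Matching coefficients shows that $s$ is a root of $c_{60}W^3-c_{42}W^2+c_{24}W-c_{06}$, that $u=c_{42}/c_{60}-s$ and $w=c_{06}/(c_{60}s)$, and that $t,v$ are the two roots of $c_{60}W^2-c_{40}W+c_{20}$; substituting these expressions into the two leftover relations (the coefficients of $Y^2Z^2$ and of $Z^4$) produces, after eliminating $s,t,u,v,w$, a system of polynomial equations in $\alpha_1,\alpha_2,\alpha_3,\beta_2,\beta_3$ only. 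The final step is to verify --- by computing a Gr\"obner basis of the corresponding ideal, as in the proof of Lemma~\ref{lem:h1} --- that every point of its vanishing locus has two equal coordinates among $\{0,1,\alpha_1,\alpha_2,\alpha_3,\beta_2,\beta_3\}$, so that no Howe-type quintuple satisfies it; this is the contradiction that finishes the proof.

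I expect case (iii) to be the real obstacle: cases (i) and (ii) close cleanly from the lemmas already in hand, but eliminating \eqref{eq:A2} appears to need the full explicit coefficient formulas together with a computer-assisted elimination, and one must keep track of the branches coming from the cubic for $s$ and the quadratic for $t,v$. One convenient simplification is that $1+u+w\neq 0$: in the notation of \eqref{eq:A2} one has $d_1=c_{60}(1+s)(1+u+w)$ and $d_2=c_{60}\,t(1+u+w)$, so $1+u+w=0$ would again give $d_1=d_2=0$, against Lemma~\ref{lem:p1p2}(2); this removes several branches at once.
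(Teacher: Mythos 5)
Your reduction is exactly the paper's: Lemma \ref{lem:c60} puts $f$ under the hypotheses of Lemmas \ref{lem:iired1}--\ref{lem:irred4}, a linear factor is excluded by Lemma \ref{lem:iired1} against Lemma \ref{lem:p1p2}(2), three quadratics by Lemma \ref{lem:irred2}, and two cubics by Lemma \ref{lem:irred4} against Lemma \ref{lem:p1p2}(1) --- all as in the paper --- and your side computations in case (iii) are correct (indeed $c_{60}s^3-c_{42}s^2+c_{24}s-c_{06}=0$, $t,v$ are the roots of $c_{60}W^2-c_{40}W+c_{20}$, and $d_1=c_{60}(1+s)(1+u+w)$, $d_2=c_{60}t(1+u+w)$, which is a nice way to kill the branch $1+u+w=0$).

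The gap is that for the quadratic-times-quartic case you stop at a plan: ``eliminate $s,t,u,v,w$ and verify by a Gr\"obner basis that the vanishing locus contains no Howe-type point.'' That verification is the entire content of the hardest case, and you neither perform it nor give any reason it must come out favorably, so as written the proof of Proposition \ref{prop:irr} is not complete. Note also two points where your plan is materially harder than what the paper does. First, the paper never introduces the cubic in $s$: it uses Lemma \ref{lem:p1p2}(1) a second time, observing that $twr$ and $-svr$ are the roots of $X^2-c_{04}X-c_{20}c_{06}$ whose discriminant has the explicit polynomial square root $(\alpha_2-\alpha_3)p_2$ (just as $(\beta_2-\beta_3)p_1$ is one for $c_{40}^2-4c_{60}c_{20}$); this makes $2rt$, $2rv$, $2rtw$, $-2rsv$ polynomials in the $\alpha_i,\beta_j$ up to a choice of two signs, so the leftover relations $\xi_1=\xi_2=0$ become explicit polynomial identities with only four sign cases, whose numerators factor into pieces $\eta_i,\eta_j'$ with every difference $\eta_i-\eta_j'$ a product of manifestly nonzero factors such as $(\alpha_3-\beta_2)(\alpha_2-\beta_3)(\alpha_2-\beta_2)$ (Appendix \ref{app:eta}). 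Your route through the irrational root $s$ of the cubic forces either three extra branches or an elimination with $s$ as an auxiliary variable, and you would still have to exhibit the resulting factorizations. Second, a Gr\"obner basis computed over $\mathbb{Q}$ does not by itself settle positive characteristic; the paper's conclusion transfers to $p>2$ because the verified factorization identities have integer coefficients (with only powers of $2$ in denominators), a point your plan would need to address explicitly.
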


\begin{proof}
    Assume for a contradiction that $f$ is reducible.
    If $f$ has a linear factor, then it follows from Lemma \ref{lem:iired1} that $c_{60} + c_{42} + c_{24} + c_{06} = 0$ and $c_{40} + c_{22} + c_{04}=0$, which contradicts Lemma \ref{lem:p1p2} (2).
    Therefore, $f$ has no linear factor.
    Also by Lemma \ref{lem:irred2}, it suffices to consider only the case where $f$ is factored into the product of two cubic irreducible polynomials, or quadratic and quartic ones.
    If $f$ is the product of two cubic irreducible polynomials, then Lemma \ref{lem:irred4} implies $ c_{40}^2 - 4 c_{60}c_{20} =0$ and $c_{04}^2 + 4 c_{06}c_{20} =0$, but this contradicts Lemma \ref{lem:p1p2} (1).
    Here, we consider the remaining case, namely $f = r H_1 H_2$ as in \eqref{eq:A2} of Lemma \ref{lem:irred3}, so that
    \begin{eqnarray*}
        rH_1H_2 \!\!\!&=& \!\!\! r Y^6 + (s + u)r Y^4 Z^2 + (t + v)r Y^4 + (s u + w)r Y^2 Z^4 \\
        && + (sv + t u - v)r Y^2 Z^2 + t v r Y^2 + s w r Z^6  + (-s v + t w)r Z^4 - t v rZ^2
    \end{eqnarray*}
    with $r = c_{60}$ and $s,t,v,w \neq 0$.
    From expansion of $f = rH_1 H_2$, we can write $s, t, u,v,w$ as rational functions of $\alpha_{i}$'s and $\beta_j$'s as follows:
    \begin{itemize}
        \item From the coefficients of $Y^4$ and $Y^2$ in $r H_1 H_2$, the elements $tr$ and $v r$ are the roots of $X^2 -c_{40} X + c_{20} c_{60}$.
        Since we can take $(\beta_2 - \beta_3)p_1$ as a square of $c_{40}^2 - 4 c_{20}c_{60}$ by Lemma \ref{lem:p1p2} (1), the roots are given by $(c_{40} \pm (\beta_2 - \beta_3) p_1)/2$, so that
        \[
        t, v = \frac{c_{40} \pm (\beta_2 - \beta_3) p_1}{2 r}.
        \]
        \item Once $t$ and $v$ are specified, the elements $s$ and $w$ are also determined.
        Indeed, from the coefficients of $Z^6$, $Z^4$, and $Z^2$ in $r H_1 H_2$, the elements $t w r$ and $- s v r$ are the roots of $X^2 -c_{04} X - c_{20} c_{06}$.
        Since we can take $(\alpha_2 - \alpha_3)p_2$ as a square of $c_{04}^2 + 4 c_{20}c_{06}$ by Lemma \ref{lem:p1p2} (1), the roots are given by $(c_{04} \pm (\alpha_2 - \alpha_3) p_2)/2$, so that
        \[
        tw, -sv = \frac{c_{04} \pm (\alpha_2 - \alpha_3) p_2}{2r}.
        \]
        \item From the coefficients of $Y^4 Z^2$, $Y^2 Z^4$, and $Y^2Z^2$ in $r H_1 H_2$, the element $u$ is the common root of the linear system 
        \[
        \begin{cases}
        r u  = c_{42} - r s,\\
        r s u = c_{24}- r w,\\
        r t u = c_{22} - r s v + r v
        \end{cases}
        \]
        with respect to $u$; therefore, we have $\xi_1 := c_{24}- r w - s ( c_{42} - r s) = 0$ and $\xi_2:=c_{22} - r s v + r v - t (c_{42} - r s)=0$.
    \end{itemize}
    Recall that $r,t,v \neq 0$, and we can compute $\xi_1$ as
    \[
    \begin{aligned}
        \xi_1 =& \frac{8 r^3 t v^2 c_{24} - 8 r^4 t v^2w - 8 r^3 s t v^2 c_{42} + 8r^4 s^2 t v^2 }{ 8 r^3 t v^2 }\\
        =& \frac{(2r t) (2 r v)^2 c_{24} - (2r t w) (2 rv)^2 r + (- 2 r s v) (2rt) (2 r v) c_{42} + (-2rsv)^2 (2r t)r }{ (2r t)(2 r v)^2 }.
    \end{aligned}
    \]
    Here, both the numerator and denominator of the right hand side are polynomials in $\alpha_i$'s and $\beta_j's$ over $k$, and we denote them by $N_{\xi_1}$ and $D_{\xi_1}$ respectively.
    Similarly, we can write $\xi_2$ as
    \[
    \begin{aligned}
        \xi_2 =& \frac{4 r^2 v c_{22} - 4r^3 s v^2 + 4r^3 v^2 - 4r^2 t v c_{42} + 4 r^3 s t v  }{ 4 r^2 v }\\
        =& \frac{2 r (2r v) c_{22} + r(-2rsv) (2r v) + r(2r v)^2 - (2rt)(2r v) c_{42} -r(-2rsv) (2 r t)  }{ 2 r(2rv) }.
    \end{aligned}
    \]
    and both the numerator and denominator of the right hand side are are polynomials in $\alpha_i,\beta_j$ over $k$.
    They are denoted by $N_{\xi_2}$ and $D_{\xi_2}$ respectively.

    Below we shall derive a contradiction from $\xi_1 = \xi_2=0$, by investigating factors of $N_{\xi_1}$ and $N_{\xi_2}$.
    Note that, by the values of $s$, $t$, $v$, and $w$, there are the following $4$ cases to be considered:
    \[
    {\bf (I)} \
    \begin{cases}
        2 r t = c_{40} + (\beta_2 - \beta_3)p_1,\\
        2 r v = c_{40} - (\beta_2 - \beta_3)p_1,\\
        2rtw  = c_{04} + (\alpha_2 - \alpha_3) p_2,\\
        -2rsv= c_{04} - (\alpha_2 - \alpha_3) p_2.
    \end{cases}
    \quad
    {\bf (II)} \
     \begin{cases}
        2 r t = c_{40} - (\beta_2 - \beta_3)p_1,\\
        2 r v = c_{40} + (\beta_2 - \beta_3)p_1,\\
        2rtw  = c_{04} + (\alpha_2 - \alpha_3) p_2,\\
        -2rsv= c_{04} - (\alpha_2 - \alpha_3) p_2.
    \end{cases}
    \]
    \[
    {\bf (III)} \
     \begin{cases}
        2 r t = c_{40} + (\beta_2 - \beta_3)p_1,\\
        2 r v = c_{40} - (\beta_2 - \beta_3)p_1,\\
        2rtw  = c_{04} - (\alpha_2 - \alpha_3) p_2,\\
        -2rsv= c_{04} + (\alpha_2 - \alpha_3) p_2.
    \end{cases}
    \quad
    {\bf (IV)} \
    \begin{cases}
        2 r t = c_{40} - (\beta_2 - \beta_3)p_1,\\
        2 r v = c_{40} + (\beta_2 - \beta_3)p_1,\\
        2rtw  = c_{04} - (\alpha_2 - \alpha_3) p_2,\\
        -2rsv= c_{04} + (\alpha_2 - \alpha_3) p_2.
    \end{cases}
    \]
    In each of the four cases, we can verify that $N_{\xi_1}$ and $D_{\xi_1}$ have a non-trivial common factor, and dividing both by their gcd (denoted by $G_{\xi_1}$), we confirmed by a computer calculation (in the case $p=0$) that
    \begin{eqnarray*}
        \xi_1 = -\frac{\eta_1 \eta_2 \eta_3}{D_{\xi_1}/G_{\xi_1}} \quad \mbox{and} \quad \xi_2 = -\frac{\eta_1' \eta_2' \eta_3'}{D_{\xi_2}/G_{\xi_2}}
    \end{eqnarray*}
    for some $\eta_1, \eta_2, \eta_3, \eta_1', \eta_2',\eta_3' \in R[\alpha_1,\alpha_2,\alpha_3,\beta_1,\beta_2]$ with $\eta_1'=p_1$, where $\eta_3'$ is of the form $\alpha_i - \beta_j$ with $i \neq j$, see Appendix \ref{app:eta} for details.
    Clearly this holds also for $p>2$.
    Since $\xi_1 = \xi_2 = 0$ with $\eta_3' \neq 0$, we have $\eta_1 \eta_2 \eta_3 = \eta_1' \eta_2'=0$, equivalently $\eta_i = \eta_j'=0$ for some $i,j$.
    For such a pair $(i,j)$, it follows that $\eta_i - \eta_j' = 0$.
    However, we can verify by a tedious computation that this does not hold for any $(i,j)$, see Appendix \ref{app:eta} for details on factorizations of $\eta_i - \eta_j'$.
    For example, in the case (I), we obtain $\eta_1 - \eta_1' = -\beta_3 (\alpha_3 - \beta_2) (\alpha_2 - \beta_3)(\alpha_2 - \beta_2)$, which is not zero, since $(\alpha_1,\alpha_2,\alpha_3,\beta_2,\beta_3)$ is of Howe type.
    Also for the other cases, similar factorizations of $\eta_i - \eta_j'$ are obtained, and theorefore the proof has been completed.
\end{proof}


\if 0
\begin{enumerate}
    \item[(1-1)] The first case is 
    \[
    \begin{cases}
    H_1 = Y^2 - Z^2,\\
    H_2 = Y^4 + (s Z^2 + t) Y^2 + (u Z^4 + v Z^2 + w).
    \end{cases}
    \]
    In this case, we have
    \[
    H_1 H_2 = Y^6 + ((s-1)Z^2 + t ) Y^4  + ((u - s) Z^4 + (v - t) Z^2 + w ) Y^2  - (u Z^6 + v Z^4 + w Z^2)
    \]
    and thus
    \[
    rH_1 H_2 = rY^6 + ((s-1)rZ^2 + tr ) Y^4  + ((u - s)r Z^4 + (v - t) rZ^2 + w r) Y^2  - (u rZ^6 + vr Z^4 + w rZ^2).
    \]
    Therefore, the existence of such a pair $(H_1,H_2)$ implies
    \[
    \left\{
    \begin{array}{cc}
        c_{24} = - c_{06} - (c_{42} + c_{60}) & \mbox{from $Y^2 Z^4$-coefficient}, \\
        c_{22} = - c_{40} - c_{04} & \mbox{from $Z^2Y^2$-coefficient} .
    \end{array}
    \right.
    \]
    A computer calculation shows that $d_1 := c_{24} + c_{06} + c_{42} + c_{60}$ and $d_2 := c_{22} + c_{40} + c_{40}$ factor into
    \begin{eqnarray*}
        d_1 &=& (\alpha_2 - \beta_3) (\alpha_3 - \beta_2) (\alpha_2 - \beta_3) (\alpha_2 - \beta_2) (\alpha_2 + \alpha_3 - \beta_2 - \beta_3),\\
        d_2 &=& (\alpha_2 \alpha_3 - \beta_2 \beta_3)(\alpha_2 \alpha_3 - \alpha_2 - \alpha_3 - \beta_2 \beta_3 + \beta_2 + \beta_3)\\
     & & \cdot (\alpha_1 \alpha_2 + \alpha_1 \alpha_3 - \alpha_1 \beta_2 - \alpha_1 \beta_3 - \alpha_2 \alpha_3 + \beta_2 \beta_3).
    \end{eqnarray*}
    Our assumption implies $\alpha_2 + \alpha_3 - \beta_2 - \beta_3 = 0$, and thus $d_2$ is equal to $(\alpha_2 \alpha_3 - \beta_2 \beta_3)^3 $ up to sign.
    Therefore, putting $\alpha = \alpha_2 + \alpha_3 = \beta_2 + \beta_3$ and $\beta = \alpha_2 \alpha_3 = \beta_2 \beta_3$, the pairs $(\alpha_2,\alpha_3)$ and $(\beta_2,\beta_3)$ are the roots of $X^2 - \alpha X + \beta$, so that $\{\alpha_2, \alpha_3\} = \{ \beta_2,\beta_3\}$, a contradiction.
    
    \item[{(1-2)}] The second case is
    \[
    \begin{cases}
        H_1 = Y^2 + s Z^2 + t,\\
        H_2 = Y^4 + (u Z^2 + v)Y^2 + (w Z^4 - v Z^2),
    \end{cases}
    \]
    whence
    \begin{eqnarray*}
        H_1H_2 &=& Y^6 + (s + u) Y^4 Z^2 + (t + v) Y^4 + (s u + w) Y^2 Z^4 \\
        && + (sv + t u - v) Y^2 Z^2 + t v Y^2 + s w  Z^6  + (-s v + t w) Z^4 - t v Z^2.
    \end{eqnarray*}

    \item[(1-3)] The third case is
    \[
    \begin{cases}
        H_1 = Y^2 + s Z^2 + t Z + u,\\
        H_2 = Y^4 + ((s-1)Z^2 - t Z + u)Y^2 + (-s Z^4 + t Z^3 - u Z^2),
    \end{cases}  
    \]
    whence
    \begin{eqnarray*}
            H_1 H_2 &=& Y^6 + (2s -1) Y^4 Z^2 + 2 u Y^4 + (s^2 - 2s) Y^2 Z^4 \\
            &&+ (2 s u - t^2 - 2 u)Y^2 Z^2 + u^2 Y^2 - s^2 Z^6 + (-2 s u + t^2) Z^4 - u^2 Z^2.
    \end{eqnarray*}
    Therefore, similarly to (1-1), it follows from $e (Y,Z) = r H_1 H_2$ that
    \[
    \left\{
    \begin{array}{cc}
        c_{24} = - c_{06} - (c_{42} + c_{60}) & \mbox{from $Y^2 Z^4$-coefficient}, \\
        c_{22} = - c_{40} - c_{04} & \mbox{from $Y^2 Z^2$-coefficient}.
    \end{array}
    \right.
    \]
    
    \item[(1-4)] In this case, $H_2$ and $H_3$ are given by
    \[
    \begin{cases}
        H_2 = Y^2 + (s Z) Y + (t Z^2 + u),\\
        H_3 = Y^4 + (-s Z)Y^3  + ((t-1)Z^2 + u) Y^2 + (s Z^3) Y + (-t Z^4 - u Z^2),
    \end{cases}
    \]
    whence
    \begin{eqnarray*}
            H_1 H_2 &=& Y^6 + (-s^2 + 2 t -1) Y^4 Z^2 + 2 u Y^4 + (s^2 + t^2 - 2 t) Y^2 Z^4 \\
            &&+ (2 t u - 2 u)Y^2 Z^2 + u^2 Y^2 - t^2 Z^6 -2 t u  Z^4 - u^2 Z^2.
    \end{eqnarray*}
    By $e (Y,Z) = r H_1 H_2$, we obtain
    \[
    \left\{
    \begin{array}{cc}
        c_{24} = - c_{06} - (c_{42} + c_{60}) & \mbox{from $Y^2 Z^4$-coefficient}, \\
        c_{22} = - c_{40} - c_{04} & \mbox{from $Y^2 Z^2$-coefficient} ,
    \end{array}
    \right.
    \]
    which is the same system of equations as in the case (1-1).

    \item[(1-5)] In this case,
    \[
    \begin{cases}
        H_1 = Y^2 + s Y + (t Z^2 + u),\\
        H_2 = Y^4 - s Y^3 + ((t-1)Z^2 +u)Y^2 + (s Z^2) Y + (-t Z^4 - u Z^2),
    \end{cases}
    \]
    whence
    \begin{eqnarray*}
        rH_1H_2 &=& rY^6 + r(2 t -1) Y^4 Z^2  + r(-s^2 + 2u)Y^4 + r(t^2-2t) Y^2 Z^4 \\
        && + r(s^2 + 2 t u -2 u) Y^2 Z^2 + ru^2 Y^2 - rt^2 Z^6 - 2 rt u Z^4 - ru^2 Z^2.
    \end{eqnarray*}
    Therefore, the existence of such $(H_1,H_2)$ implies
    \[
    \left\{
    \begin{array}{cc}
        c_{24} = - c_{06} - (c_{42} + c_{60}) & \mbox{from $Y^2 Z^4$-coefficient}, \\
        c_{22} = - c_{40} - c_{04} & \mbox{from $Y^2 Z^2$-coefficient} ,
    \end{array}
    \right.
    \]
    This is the same system of equations as in the case (1-1).
\end{enumerate}


\begin{enumerate}
    \item[{(2-1)}]
    The first case is
     \[
    \begin{cases}
        H_1 = Y^3 + (sZ)Y^2 + (t Z^2 +u) Y + (v Z^3 + u Z),\\
        H_2 =  Y^3 - (sZ)Y^2 + (t Z^2 +u) Y - (v Z^3 + u Z),\\
    \end{cases}
    \]
    whence
    \begin{eqnarray*}
            H_1 H_2 &=& Y^6 + (-s^2 + 2t) Y^4 Z^2 + 2u Y^4 + (-2sv + t^2) Y^2 Z^4 \\
            &&+ (-2su + 2tu) Y^2 Z^2 + u^2 Y^2 - v^2 Z^6 -2 u v Z^4 - u^2 Z^2.
    \end{eqnarray*}
    Therefore, it follows from $e (Y,Z) = r H_1 H_2$ that
     \[
    \left\{
    \begin{array}{cl}
        p_1=c_{40}^2 - 4 c_{20}c_{60} =0& \mbox{from the coefficients of $Y^4$ and $Y^2$}, \\
        p_2=c_{04}^2 + 4 c_{06}c_{20} =0& \mbox{from the coefficients of $Y^2$, $Z^6$, and $Z^4$}, \\
    \end{array}
    \right.
    \]
    where $p_1$ and $p_2$ are the same as in Lemma \ref{lem:p1p2}.
     A computer calculation shows that $p_2-p_1$ is factored into
     \[
     p_2 - p_1 = (\alpha_3 - \beta_3) (\alpha_2 - \beta_2) (\alpha_2 - \beta_3) (\alpha_2 - \beta_2),
     \]
     which is not zero for any quintuple $(\alpha_1,\alpha_2,\alpha_3,\beta_2,\beta_3)$ of Howe type.
     
    \item[(2-2)] The second case is
    \[
    \begin{cases}
        H_1 = Y^3 + (Z)Y^2 + (s Z^2 + t) Y + (s Z^3 + t Z),\\
        H_2 = Y^3 - (Z)Y^2 + (u Z^2 + t) Y - (u Z^3 + t Z),\\
    \end{cases}
    \]
    whence
    \begin{eqnarray*}
            H_1 H_2 &=& Y^6 + (s + u -1) Y^4 Z^2 + 2t Y^4 + (s u - s - u) Y^2 Z^4 \\
            &&+ (s t + t u - 2t )Y^2 Z^2 + t^2 Y^2 - su Z^6 +(-st-tu)  Z^4 - t^2 Z^2.
    \end{eqnarray*}
    By $e (Y,Z) = r H_1 H_2$, we obtain
    \[
    \left\{
    \begin{array}{cc}
        c_{24} = - c_{06} - (c_{42} + c_{60}) & \mbox{from $Y^2 Z^4$-coefficient}, \\
        c_{22} = - c_{40} - c_{04} & \mbox{from $Y^2 Z^2$-coefficient} .
    \end{array}
    \right.
    \]
    This is the same system of equations as in the case (1-1):
    

    \item[(2-3)] The third case is
    \[
    \begin{cases}
        H_1 = Y^3 + (Z+s)Y^2 + (t Z^2 + s Z + u) Y + (t Z^3 + u Z),\\
        H_2 = Y^3 - (Z+s)Y^2 + (t Z^2 + s Z + u) Y - (t Z^3 + u Z),\\
    \end{cases}
    \]
    whence
    \begin{eqnarray*}
            H_1 H_2 &=& Y^6 + (2 t -1) Y^4 Z^2 + (-s^2 + 2u) Y^4 + (t^2- 2 t) Y^2 Z^4 \\
            &&+ (s^2 + 2 t u - 2u )Y^2 Z^2 + u^2 Y^2 - t^2 Z^6 -2 t u  Z^4 - u^2 Z^2.
    \end{eqnarray*}
    It follows from $e (Y,Z) = r H_1 H_2$ that
    \[
    \left\{
    \begin{array}{cc}
        c_{24} = - c_{06} - (c_{42} + c_{60}) & \mbox{from $Y^2 Z^4$-coefficient}, \\
        c_{22} = - c_{40} - c_{04} & \mbox{from $Y^2 Z^2$-coefficient} ,
    \end{array}
    \right.
    \]
    which is the same system of equations as in the case (1-1):


    \item[(2-4)] The final case is
     \[
    \begin{cases}
        H_1 = Y^3 + ZY^2 + (s Z^2 + t Z + u) Y + (s Z^3 + tZ^2 + u Z),\\
        H_2 = Y^3 + ZY^2 + (s Z^2 - t Z + u) Y - (s Z^3 + tZ^2 + u Z),\\
    \end{cases}
    \]
    whence
    \begin{eqnarray*}
            H_1 H_2 &=& Y^6 + (2 s - 1) Y^4 Z^2 + 2 u Y^4 + (s^2 - 2s) Y^2 Z^4 \\
            &&+ (2 s u - t^2 - 2 u )Y^2 Z^2 + u^2 Y^2 - s^2 Z^6 +(-2su+t^2) Z^4 - u^2 Z^2.
    \end{eqnarray*}
    By $e (Y,Z) = r H_1 H_2$, we have the same system of equations as in (1-1):
    \[
    \left\{
    \begin{array}{cc}
        c_{24} = - c_{06} - (c_{42} + c_{60}) & \mbox{from $Y^2 Z^4$-coefficient}, \\
        c_{22} = - c_{40} - c_{04} & \mbox{from $Y^2 Z^2$-coefficient} .
    \end{array}
    \right.
    \]
    
\end{enumerate}
\fi

\subsection{Analysis of singularities}\label{subsec:sing}

We use the same notation as in the previous subsections;
let $H$ be a non-hyperelliptic Howe curve of genus five over $k$ in characteristic $p$ with $p = 0$ or $p \geq 7$, and $C$ its associated sextic affine plane curve $f(Y,Z) = 0$ constructed in Subsection \ref{subsec:our sextic}.
The projective closure $\tilde{C}$ of $C$ in $\mathbb{P}^2 = \mathrm{Proj}(k[Y,Z,X])$ has arithmetic genus $g(\tilde{C})=10$ by degree-genus formula, and thus it has a singularity.
In general, the following inequality holds: 
\begin{equation}\label{inequality:genus-multiplicity}
g(H) \le g(\tilde{C}) - \sum_{P\in \mathrm{Sing}(\tilde{C})} \frac{m_{P}(m_{P}-1)}{2},
\end{equation}
where $\mathrm{Sing}(\tilde{C})$ and $m_{P}$ respectively denote the set of singularities of $\tilde{C}$ in $\mathbb{P}^2$ and the multiplicity of $\tilde{C}$ at a point $P$.
Therefore, $\tilde{C}$ has at most five singular points.
Indeed, homogenizing $f$ as 
\[
\begin{aligned}
    F =& c_{60}Y^6 + c_{42} Y^4Z^2  + c_{24}Y^2 Z^4 + c_{06}Z^6 \\
    & + c_{40} Y^4 X^2 + c_{22}Y^2 Z^2 X^2 +  c_{04} Z^4 X^2 + c_{20}Y^2 X^4 + c_{02} Z^2 X^4,
\end{aligned}
\]
and denoting $ \frac{\partial F}{\partial Y}$, $ \frac{\partial F}{\partial Z}$, and $\frac{\partial F}{\partial X}$ by $F_Y$, $F_Z$, and $F_X$ respectively, one has
\[
\begin{aligned}
        F_Y =& 2 Y(3 c_{60} Y^4 + 2 c_{42} Y^2 Z^2 + 2 c_{40} Y^2 X^2 + c_{24} Z^4 + c_{22} Z^2 X^2+c_{20} X^4),\\
    F_Z =& 2 Z(c_{42} Y^4 + 2 c_{24} Y^2 Z^2 + c_{22} Y^2 X^2 + 3 c_{06} Z^4 + 2c_{04} Z^2X^2 + c_{02} X^4),\\
   F_X =& 2X(c_{40} Y^4 + c_{22} Y^2 Z^2 + 2 c_{20} Y^2 X^2 + c_{04} Z^4 + 2 c_{02} Z^2 X^2),
\end{aligned}
\]
whence $(0:0:1)$ is a singular point of $\tilde{C}$.

\begin{lemma}\label{lem:001}
   The singular point $(0:0:1)$ on $\tilde{C}$ is a node (i.e., an ordinary double point),
   and $\tilde{C}$ has exactly $2$ or $4$ singularities other than $(0:0:1)$.
\end{lemma}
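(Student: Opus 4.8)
The plan is to prove the two assertions in turn: the node at $(0:0:1)$ by a tangent-cone computation, and the count of the remaining singularities by combining the genus drop of $\tilde C$ with its $(\mathbb{Z}/2\mathbb{Z})^2$-symmetry.

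For the node, I would pass to the affine chart $X=1$, in which $\tilde C$ is the curve $f(Y,Z)=0$. Since $f$ has neither a constant nor a linear term, the tangent cone of $\tilde C$ at the origin is exactly the quadratic part $c_{20}Y^2+c_{02}Z^2$ of $f$. By Lemma \ref{lem:c60}(2) one has $c_{02}=-c_{20}=\mathrm{Res}_x(Q_1,Q_2)\neq 0$, so this tangent cone equals $c_{20}(Y-Z)(Y+Z)$, a product of two \emph{distinct} linear forms. Hence $(0:0:1)$ is an ordinary double point; in particular its delta invariant equals $1$.

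For the count, I would first record that $\tilde C$ is integral: it is irreducible by Proposition \ref{prop:irr}, and reduced because $F$ is an irreducible form. Thus its arithmetic genus is $g(\tilde C)=\binom{6-1}{2}=10$, and by the birational equivalence established in Subsection \ref{subsec:our sextic} the normalization of $\tilde C$ is $H$, of genus $5$; the sharp form of \eqref{inequality:genus-multiplicity}, namely $g(H)=g(\tilde C)-\sum_{P\in\mathrm{Sing}(\tilde C)}\delta_P$, then forces $\sum_P\delta_P=5$. Since $\delta_{(0:0:1)}=1$, the remaining singular points satisfy $\sum_{P\neq(0:0:1)}\delta_P=4$; as every singular point has $\delta_P\geq 1$, there are between one and four of them. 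Finally I would invoke the Klein four-group $G$ acting on $\mathbb{P}^2$ by $[Y:Z:X]\mapsto[\pm Y:\pm Z:X]$: since $F$ is even in $Y$, in $Z$, and in $X$, this group preserves $\tilde C$ and hence permutes $\mathrm{Sing}(\tilde C)$. A direct check shows the only $G$-fixed points of $\mathbb{P}^2$ are $(0:0:1)$, $(1:0:0)$, $(0:1:0)$, and the latter two are not on $\tilde C$ because $F(1,0,0)=c_{60}\neq 0$ and $F(0,1,0)=c_{06}\neq 0$ by Lemma \ref{lem:c60}. Therefore each singular point other than $(0:0:1)$ lies in a $G$-orbit of size $2$ or $4$, so their number is even; combined with the bound just obtained, it is exactly $2$ or $4$.

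The one step requiring genuine care is the passage to the equality $g(H)=g(\tilde C)-\sum_P\delta_P$: this uses that $\tilde C$ is integral (Proposition \ref{prop:irr}) and, more importantly, that $\tilde C$ is truly \emph{birational} to $H$ rather than merely dominated by it — precisely what Subsection \ref{subsec:our sextic} supplies. Everything else is routine: an explicit tangent-cone computation, the list of three $G$-fixed points, and the nonvanishing of $c_{60}$ and $c_{06}$. I would also remark that the same bookkeeping pins down the multiplicities of the remaining singular points: in the two-point case they form a single $G$-orbit and each has delta invariant $2$, while in the four-point case each has delta invariant $1$; since $\binom{m_P}{2}\leq\delta_P$ and $m_P\geq 2$ at a singular point, in both cases $m_P=2$.
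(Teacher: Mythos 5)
Your proof is correct and follows essentially the same route as the paper: the tangent-cone computation $c_{20}(Y-Z)(Y+Z)$ at the origin (using $c_{02}=-c_{20}\neq 0$ from Lemma \ref{lem:c60}), a genus comparison between $\tilde{C}$ and its normalization $H$, and the $(\pm Y,\pm Z)$-symmetry to force the remaining singularities into even-sized orbits. The only difference is bookkeeping: you use the exact formula $g(H)=p_a(\tilde{C})-\sum_P\delta_P$ (which also cleanly pins down the multiplicities), whereas the paper combines the inequality \eqref{inequality:genus-multiplicity} with a separate contradiction via the genus--degree formula to rule out $(0:0:1)$ being the only singular point; both yield the same conclusion.
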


\begin{proof}
Suppose that $(0:0:1)$ is the only singular point of $F=0$. Since $c_{20}=-c_{02}$ and $c_{20} \neq 0$ from Lemma \ref{lem:c60} (2), the polynomial $f$ is represented by
\[
f= c_{20}(Y-Z)(Y+Z)+(\text{higher terms}).
\]
Therefore, the point $(0:0:1)$ is a node of $F=0$. We have $g(H)=g(\tilde{C})-1$ from the genus-degree formula, see \cite[Chap.\ V, (3.9.2) and (3.9.5)]{hartshorne}.
This contradicts $g(H)=5$ and $g(\tilde{C})=10$.

One can easily check that, if $(a:b:1)$ (resp.\ $(a:1:0)$) is a double point of $\tilde{C}$, then so is $(\pm a: \pm b : 1)$ (resp.\ $(\pm a : 1 : 0)$). Therefore, it follows from \eqref{inequality:genus-multiplicity} that any other singularity of $\tilde{C}$ is also a double point and the number of singularities on $\tilde{C}$ is $3$ or $5$.
\end{proof}


\begin{lemma}\label{lem:3sing1}
    The projective singular curve $\tilde{C}$ has a singular point of the form $(y:1:0)$ if and only if one of the following two systems holds:
    \[
    (1) 
   \begin{cases}
         c_{42}^2 - 3 c_{60} c_{24} \neq  0,\\
         c_{42} c_{24} - 9 c_{60}c_{06} \neq 0,\\
         4(c_{42}c_{06}(c_{42}^2-3c_{60}c_{24})+ c_{60}c_{24} (c_{24}^2 - 3 c_{42} c_{06}))\\
         -(c_{42}c_{24} - 9 c_{60}c_{06})(c_{42}c_{24}+3c_{60}c_{06}) = 0.
    \end{cases}\!\!
    (2)
    \begin{cases}
         c_{42}^2 - 3 c_{60} c_{24} = 0,\\
         c_{42} c_{24} - 9 c_{60}c_{06} = 0.
    \end{cases}
    \]
    In this case, only one of (1) and (2) is satisfied.
    Furthermore, it follows that $y= \pm \sqrt{- (c_{42} c_{24} - 9 c_{60}c_{06})/2(c_{42}^2 - 3 c_{60} c_{24})}$ in case (1), and $y=\pm \sqrt{-c_{24}/c_{42}}$ in case (2).
\end{lemma}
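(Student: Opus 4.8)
The plan is to describe every point of $\tilde{C}$ on the line at infinity $X=0$ explicitly, read off when such a point is singular, and then convert that condition into the stated polynomial relations among the $c_{ij}$. First I would observe that the only candidate point at infinity not of the form $(y:1:0)$ is $(1:0:0)$, and $F(1,0,0)=c_{60}\neq 0$ by Lemma~\ref{lem:c60}, so $(1:0:0)\notin\tilde{C}$; hence every point of $\tilde{C}$ at infinity is of the form $(y:1:0)$. Restricting $F$ and the partials displayed above to $X=0$, $Z=1$ gives $F(y,1,0)=g(y^2)$, $F_Y(y,1,0)=2y\,g'(y^2)$ and $F_X(y,1,0)=0$, where
\[
g(t):=c_{60}t^3+c_{42}t^2+c_{24}t+c_{06}
\]
and $g'$ is its derivative in $t$. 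Since $g(0)=c_{06}\neq 0$ by Lemma~\ref{lem:c60}, the value $t=0$ is never a root of $g$, so any point of $\tilde{C}$ at infinity has $y\neq 0$.

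Next I would use Euler's relation $6F=YF_Y+ZF_Z+XF_X$: evaluated at a point of $\tilde{C}$ at infinity it gives $F_Z(y,1,0)=-y\,F_Y(y,1,0)$, so $F_Z$ vanishes automatically once $F$ and $F_Y$ do (and $F_X$ vanishes identically on $X=0$). I would thus conclude that $(y:1:0)$ is a singular point of $\tilde{C}$ if and only if $t_0:=y^2$ (which is then nonzero) is a common root of $g$ and $g'$, i.e.\ a repeated root of the cubic $g$; and conversely, given a repeated root $t_0$ of $g$, both points $(\pm\sqrt{t_0}:1:0)$ lie on $\tilde{C}$ and are singular. So $\tilde{C}$ has a singular point of the form $(y:1:0)$ iff $g$ has a repeated root $t_0$, and then $y=\pm\sqrt{t_0}$; it remains to make $t_0$ and the condition explicit in the coefficients.

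The remaining step is a discriminant computation. Because $c_{60}\neq 0$ and $p\neq 3$, the polynomial $g$ is a genuine cubic and $g'$ a genuine quadratic, so $g$ has a repeated root iff $\mathrm{disc}(g)=0$; a direct expansion identifies $\mathrm{disc}(g)$, up to sign, with the third polynomial in system $(1)$, namely
\[
4\bigl(c_{42}c_{06}(c_{42}^2-3c_{60}c_{24})+c_{60}c_{24}(c_{24}^2-3c_{42}c_{06})\bigr)-(c_{42}c_{24}-9c_{60}c_{06})(c_{42}c_{24}+3c_{60}c_{06})=-\mathrm{disc}(g).
\]
Write $A:=c_{42}^2-3c_{60}c_{24}$ (so $4A=\mathrm{disc}(g')$), $B:=c_{42}c_{24}-9c_{60}c_{06}$ and $C:=c_{24}^2-3c_{42}c_{06}$; a second elementary identity gives $B^2-4AC=-3\,\mathrm{disc}(g)$. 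Now split on whether $A=0$. If $A\neq 0$, then $g'$ has distinct roots, so $g$ has no triple root, hence a repeated root $t_0$ of $g$ is the unique common root of $g$ and $g'$; eliminating $t^2$ via the identity $c_{42}g'(t)-c_{60}(9g(t)-3tg'(t))=2At+B$ gives $t_0=-B/(2A)$, and since $t_0\neq 0$ forces $B\neq 0$, all of system $(1)$ holds with $y=\pm\sqrt{-B/(2A)}$. If $A=0$, then $\mathrm{disc}(g)=0$ forces $B^2=B^2-4AC=-3\,\mathrm{disc}(g)=0$, so $B=0$; conversely $A=B=0$ together with $c_{60}\neq 0$ forces $g=c_{60}\bigl(t+c_{42}/(3c_{60})\bigr)^3$, which has the triple root $t_0=-c_{42}/(3c_{60})$, and here $c_{42}\neq 0$ (otherwise $A=0$ gives $c_{24}=0$ and then $B=0$ gives $c_{06}=0$, contradicting Lemma~\ref{lem:c60}), so $A=0$ rewrites $t_0=-c_{24}/c_{42}$ and $y=\pm\sqrt{-c_{24}/c_{42}}$. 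Combining with the earlier reduction gives the claimed equivalence, and since system $(1)$ requires $A\neq 0$ while system $(2)$ requires $A=0$, at most one of them holds.

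I do not expect a genuine obstacle here; the work is essentially bookkeeping. The three points that need care are: (i) invoking Euler's relation to discard $F_Z$ (and the automatic vanishing of $F_X$) so that singularity at infinity reduces to "$y^2$ is a repeated root of the single cubic $g$"; (ii) verifying by direct expansion that the degree-four expression in the $c_{ij}$ appearing in $(1)$ really equals $\pm\mathrm{disc}(g)$, and that $B^2-4AC=-3\,\mathrm{disc}(g)$, which neatly forces $B=0$ in the degenerate branch; and (iii) tracking, via Lemma~\ref{lem:c60}, which coefficients ($c_{60}$, $c_{06}$, and, in the relevant cases, $A$ or $c_{42}$ and $c_{24}$) are guaranteed nonzero, so that the divisions in the formulas for $y$ and the case split $A\neq 0$ versus $A=0$ are legitimate.
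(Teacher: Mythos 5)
Your argument is correct and follows essentially the same route as the paper: restrict $F$, $F_Y$, $F_Z$ to the line at infinity, use the Euler-type relation $6F(Y,1,0)-Y\,F_Y(Y,1,0)=F_Z(Y,1,0)$ to reduce singularity of $(y:1:0)$ to a common root condition in $t=y^2$, eliminate to the linear expression $2(c_{42}^2-3c_{60}c_{24})t+(c_{42}c_{24}-9c_{60}c_{06})$, and split on whether $c_{42}^2-3c_{60}c_{24}$ vanishes. The only (cosmetic) difference is that you package the computation through the cubic $g(t)=c_{60}t^3+c_{42}t^2+c_{24}t+c_{06}$, identifying the third equation of (1) with $-\mathrm{disc}(g)$ and using $B^2-4AC=-3\,\mathrm{disc}(g)$ to force $B=0$ when $A=0$, whereas the paper obtains these same facts by direct substitution into $F_Y(y,1,0)=0$.
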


\begin{proof}
It follows from $c_{60},c_{06} \neq 0$ that both $(1:0:0)$ and $(0:1:0)$ do not lie on $\tilde{C}$.
We have
     \[
     \begin{aligned}
       F(Y,Z,0) =& c_{60}Y^6 + c_{42} Y^4Z^2  + c_{24}Y^2 Z^4 + c_{06}Z^6,\\
      F_Y(Y,Z,0) =& 2 Y  (3 c_{60} Y^4 + 2 c_{42} Y^2 Z^2  + c_{24} Z^4),\\
     F_Z(Y,Z,0)=& 2Z  (3 c_{06} Z^4 + 2 c_{24} Y^2 Z^2  + c_{42} Y^4),
     \end{aligned}
     \]
     and thus
     \[
     \begin{aligned}
     F(Y,1,0) =&c_{60}Y^6 + c_{42} Y^4  + c_{24}Y^2 + c_{06}, \\
     F_Y(Y,1,0) =& 2 Y  (3 c_{60} Y^4 + 2 c_{42} Y^2  + c_{24}),\\
     F_Z(Y,1,0)=& 2 (c_{42} Y^4 + 2 c_{24} Y^2 + 3 c_{06}  ),
     \end{aligned}
     \]
     where $ 6 F (Y,1,0) - Y \cdot F_Y (Y,1,0) = F_Z (Y,1,0)$.
     It also follows that
     \[
         c_{42} F_Y (Y,1,0) - 3 c_{60}Y F_Z(Y,1,0) = 2Y ( 2(c_{42}^2 - 3c_{60}c_{24}) Y^2 + c_{42} c_{24} - 9 c_{60}c_{06}).
    \]
    
     Assume that $(y : 1 : 0)$ with $y \neq 0$ is a singularity of $\tilde{C}$.
     It suffices to consider when $F_Y(Y,1,0)$ and $F_Z(Y,1,0)$ has a common root.
     First, if $c_{42} = c_{24} = 0$, then there is no root of $ F_Z(Y,1,0)$ since $c_{06} \neq 0$.
     Thus, we have $c_{42} \neq 0$ or $c_{24} \neq 0$.
    If $c_{42}^2 - 3 c_{60} c_{24} \neq 0$, then $y^2 = - (c_{42} c_{24} - 9 c_{60}c_{06})/2(c_{42}^2 - 3 c_{60} c_{24})$, so that $c_{42} c_{24} - 9 c_{60}c_{06} \neq 0$.
    Hence it follows from $F_Y(y,1,0) = 0$ that
    \[
    \frac{1}{18c_{60}} F_Y(y,1,0) =  27 c_{60}^2 c_{06}^2 - 18 c_{60}c_{42}c_{24}c_{06} + 4 c_{60}c_{24}^3 + 4c_{42}^3c_{06} - c_{42}^2c_{24}^2=0,
    \]
    which is equivalent to the second equality of (1).
         If $c_{42}^2 - 3 c_{60} c_{24} = 0$, we also have $c_{42} c_{24} - 9 c_{60}c_{06} = 0$, whence $c_{42},c_{24} \neq 0$ (and thus $c_{24}^2 - 3 c_{06}c_{42}=0$) and $c_{42}F_Y(Y,1,0) = 3 c_{60} Y F_Z(Y,1,0)$.
         In this case, $F_Y(Y,1,0)$ has exactly two roots $y$ given by $y^2 = -c_{42}/3c_{60}=-c_{24}/c_{42}$.
     Therefore, the ``only-if''-part holds.
     
     The ``if''-part can be checked easily by computations similar to ones in the above proof of the ``only-if''-part.
\end{proof}

\begin{lemma}\label{lem:3sing2}
The projective singular curve $\tilde{C}$ has a singular point of the form $(y:0:1)$ with $y \neq 0$ or $(0:z:1)$ with $z \neq 0$ if and only if one of the following two systems holds:
     \[
   (1) \
   \begin{cases}
        c_{40} \neq 0,\\
        c_{40}^2 - 4 c_{60}c_{20} = 0.
    \end{cases}
    \qquad 
    (2) \
     \begin{cases}
        c_{04} \neq 0,\\
        c_{04}^2 - 4c_{06}c_{02}=0.
    \end{cases}
    \]
    Note that only one of (1) and (2) can be satisfied by Lemma \ref{lem:p1p2} (1).
    Moreover, we have $y= \pm \sqrt{-2c_{20}/c_{40}}$ in case (1), and $z=\pm \sqrt{-2c_{02}/c_{04}}$ in case (2).
\end{lemma}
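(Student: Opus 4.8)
The plan is to apply the Jacobian criterion directly on the two coordinate axes $Z=0$ and $Y=0$, exploiting the fact that one of the partial derivatives of $F$ vanishes identically along each of them. Consider first a point of the form $(y:0:1)$ with $y\neq 0$; note that $(1:0:0)\notin\tilde{C}$ since $c_{60}\neq 0$. From the displayed formulas for $F_Y$, $F_Z$, $F_X$, setting $Z=0$ gives $F_Z(Y,0,X)=0$ identically, so (using Euler's relation, valid since $p\neq 2,3$) the point $(y:0:1)$ is singular on $\tilde{C}$ precisely when $F_Y(y,0,1)=F_X(y,0,1)=0$, in which case $F(y,0,1)=0$ follows automatically. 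Writing $t:=y^2\neq 0$ and factoring off the forced powers of $y$, these conditions amount to the three equations
\[
c_{60}t^2+c_{40}t+c_{20}=0,\qquad 3c_{60}t^2+2c_{40}t+c_{20}=0,\qquad c_{40}t+2c_{20}=0.
\]

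First suppose such a $t$ exists. The third equation together with $c_{20}\neq 0$ (Lemma \ref{lem:c60}(2)) forces $c_{40}\neq 0$ and $t=-2c_{20}/c_{40}$. Subtracting the first equation from the second yields $t(2c_{60}t+c_{40})=0$, hence $t=-c_{40}/(2c_{60})$; equating the two expressions for $t$ gives $c_{40}^2-4c_{60}c_{20}=0$, which is system (1), and $y=\pm\sqrt{-2c_{20}/c_{40}}$. Conversely, if (1) holds, put $t:=-2c_{20}/c_{40}$, a nonzero element of $k$; using $c_{20}=c_{40}^2/(4c_{60})$ one checks immediately that all three equations are satisfied, so both points $(\pm\sqrt{t}:0:1)$ are singular on $\tilde{C}$. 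Hence $\tilde{C}$ carries a singularity of the form $(y:0:1)$ with $y\neq 0$ if and only if (1) holds, with the stated value of $y$.

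The case $(0:z:1)$ with $z\neq 0$ is handled identically after interchanging the roles of $Y$ and $Z$: now $F_Y(0,Z,X)=0$ identically, $(0:1:0)\notin\tilde{C}$ because $c_{06}\neq 0$, and replacing $(c_{60},c_{40},c_{20})$ by $(c_{06},c_{04},c_{02})$ throughout shows that such a singularity exists if and only if $c_{04}\neq 0$ and $c_{04}^2-4c_{06}c_{02}=0$, i.e. system (2), with $z=\pm\sqrt{-2c_{02}/c_{04}}$. Finally, since $c_{02}=-c_{20}$ the equation in (2) reads $c_{04}^2+4c_{06}c_{20}=0$, and Lemma \ref{lem:p1p2}(1) asserts that at least one of $c_{40}^2-4c_{60}c_{20}$ and $c_{04}^2+4c_{06}c_{20}$ is nonzero; hence (1) and (2) cannot hold simultaneously.

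There is essentially no obstacle here: the whole argument is a short elimination over $k$, with no resultants involved. The only points requiring a little care are invoking $c_{20}\neq 0$ from Lemma \ref{lem:c60}(2) to discard the degenerate branch $c_{40}=0$ (on which the third equation would force $c_{20}=0$), and the bookkeeping that the characteristic hypothesis $p=0$ or $p\geq 7$ makes all the divisions by $2$, $3$, $4$, and $6$ legitimate; over the algebraically closed field $k$ the square roots cause no difficulty, and $t\neq 0$ with $p\neq 2$ guarantees the two sign choices give distinct points.
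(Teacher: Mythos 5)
Your proposal is correct and follows essentially the same route as the paper: restrict $F_Y$, $F_X$ (with $F_Z$ vanishing identically) to the axes, use $c_{20}=-c_{02}\neq 0$ to force $c_{40}\neq 0$ (resp. $c_{04}\neq 0$) and $y^2=-2c_{20}/c_{40}$ (resp. $z^2=-2c_{02}/c_{04}$), and eliminate to get the discriminant condition, with the converse checked by direct substitution. Your small additions (Euler's relation to dispense with $F=0$, and the subtraction of the two quadratic equations in $t$) are only cosmetic variations of the paper's elimination.
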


\begin{proof}
We suppose that $\tilde{C}$ has a singular point $(y:0:1)$ with $y \in k \smallsetminus \{ 0 \}$.
Since we have $F_Y (Y,0,1)= 2 Y(3 c_{60} Y^4 + 2 c_{40} Y^2 +c_{20})$, $F_Z (Y,0,1) = 0$, and $F_X(Y,0,1) = 2Y^2(c_{40} Y^2 +2 c_{20} )$, 
it follows from $c_{20} \neq 0$ that $c_{40} \neq 0$ and $y^2 = -2c_{20}/c_{40}$.
Substituting this into $F_Y(Y,0,1)$, we obtain the equality $4c_{60} c_{20} - c_{40}^2= 0$.
Conversely, if $c_{40} \neq 0$ and $4 c_{60} c_{20} - c_{40}^2 = 0$, then the system $F_Y(Y,0,1) = F_X(Y,0,1)=0$ has exactly two solutions $(y : 0 : 1)$ with $y^2 = -2c_{20}/c_{40}$ (and thus $y \neq 0$ by Lemma \ref{lem:c60} (2)).
Similarly, one can prove that $(0:z:1)$ with $z \in k \smallsetminus \{ 0 \}$ is a singularity of $\tilde{C}$ if and only if $c_{04} \neq 0$ and $c_{04}^2 + 4c_{06}c_{20}=0$;
in this case, it follows from $F_X(0,z,1)=0$ that $z^2 = -2c_{02}/c_{04}$.
\end{proof}

\begin{lemma}\label{lem:sing infinity zero}
If the projective singular curve $\tilde{C}$ has a singular point of the form $(y:0:1)$ with $y\neq 0$ or $(0:z:1)$ with $z\neq 0$, then $\tilde{C}$ has exactly three double points: $(0:0:1)$ and $(\pm y: 0: 1)$, or $(0:0:1)$ and $(0:\pm z: 1)$.
\end{lemma}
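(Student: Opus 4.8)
The plan is to reduce the assertion, by orbit-counting under the sign involutions together with the lemmas already established, to a single polynomial incompatibility that I would check by a computer calculation. By relabelling $C_1$ and $C_2$ if necessary --- which replaces $(Y,Z)$ by $(Z,Y)$ and hence turns a singular point of the form $(0:z:1)$ into one of the form $(y:0:1)$ --- I may assume $\tilde{C}$ has a singular point $(y:0:1)$ with $y\neq 0$. By Lemma~\ref{lem:3sing2} this is equivalent to $c_{40}\neq 0$ and $c_{40}^2-4c_{60}c_{20}=0$, which by Lemma~\ref{lem:p1p2}(1) and $\beta_2\neq\beta_3$ is equivalent to $p_1=0$ (and then $c_{40}\neq 0$ automatically, since $c_{20},c_{60}\neq 0$). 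Then $p_2\neq 0$ because $p_2-p_1=(\alpha_2-\beta_2)(\alpha_2-\beta_3)(\alpha_3-\beta_2)(\alpha_3-\beta_3)$, so $c_{04}^2+4c_{06}c_{20}\neq 0$ and hence, again by Lemma~\ref{lem:3sing2}, $\tilde{C}$ has no singular point $(0:z:1)$ with $z\neq 0$. Finally, the proof of Lemma~\ref{lem:3sing2} shows that every singular point $(a:0:1)$ with $a\neq 0$ satisfies $a^2=-2c_{20}/c_{40}$, so $(\pm y:0:1)$ are the only such points.

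Next I would invoke Lemma~\ref{lem:001}: $\tilde{C}$ has exactly $3$ or $5$ singular points, all of them double points, and $(0:0:1)$ is one of them. We have already exhibited the three points $(0:0:1)$ and $(\pm y:0:1)$, so it suffices to exclude the case of $5$ singular points. The singular locus is stable under the group $\{(Y,Z)\mapsto(\pm Y,\pm Z)\}$, and so is the set of its remaining elements (those other than $(0:0:1)$ and $(\pm y:0:1)$); if there were $5$ singular points, these two further ones would form a single orbit of size $2$, because the only point of $\tilde{C}$ fixed by the whole group is $(0:0:1)$ and the orbit of any $(a:b:1)$ with $a,b\neq 0$ has size $4$. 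A size-$2$ orbit is a pair $(\pm a:0:1)$, a pair $(0:\pm b:1)$, or a pair $(\pm a:1:0)$; the first two are already excluded, so it would have to be a pair $(\pm a:1:0)$ at infinity.

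Hence it remains to show that $p_1=0$ rules out a singular point of $\tilde{C}$ of the form $(a:1:0)$. By Lemma~\ref{lem:3sing1}, such a point exists precisely when one of the two systems (1), (2) there holds, and both of these are sets of polynomial relations among $c_{60},c_{42},c_{24},c_{06}$ only. Treating $\alpha_1,\alpha_2,\alpha_3,\beta_2,\beta_3$ as indeterminates and substituting the explicit formulas for the $c_{ij}$, I would verify by a Gr\"obner basis computation over $\mathbb{Q}$ (whose conclusion persists for $p\geq 7$) that the ideal generated by $p_1$ together with the relations of system (1), respectively of system (2), contains a nonzero polynomial that factors into terms of the forms $\alpha_i$, $\alpha_i-1$, $\alpha_i-\alpha_j$, $\beta_i$, $\beta_i-1$, $\beta_i-\beta_j$, and $\alpha_i-\beta_j$; since none of these vanishes at a quintuple of Howe type, this contradiction forces $\tilde{C}$ to have exactly the three double points $(0:0:1)$, $(y:0:1)$, $(-y:0:1)$ (and, after relabelling, $(0:0:1)$, $(0:z:1)$, $(0:-z:1)$ in the other case).

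The main obstacle is this last step: one has to make the incompatibility of $p_1=0$ with each of the systems in Lemma~\ref{lem:3sing1} completely explicit, which is a several-variable elimination computation in $k[\alpha_1,\alpha_2,\alpha_3,\beta_2,\beta_3]$. Everything else is routine orbit bookkeeping on top of Lemmas~\ref{lem:001}, \ref{lem:3sing1}, \ref{lem:3sing2}, and \ref{lem:p1p2}; note in particular that Lemma~\ref{lem:001} is exactly what rules out the a priori possibility of a further orbit of size $4$, which would push the number of singular points to $7$.
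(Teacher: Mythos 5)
Your reduction is fine up to the last step: the symmetry/orbit bookkeeping, the use of Lemma~\ref{lem:3sing2} to get $p_1=0$, of Lemma~\ref{lem:p1p2}(1) to exclude a pair $(0:\pm b:1)$, and of Lemma~\ref{lem:001} to limit the count to $3$ or $5$ double points are all correct, and they correctly leave only one scenario to kill: five singular points consisting of $(0:0:1)$, $(\pm y:0:1)$ and a pair $(\pm a:1:0)$ at infinity. But that exclusion --- the incompatibility of $p_1=0$ with the systems of Lemma~\ref{lem:3sing1} on the Howe-type locus --- is exactly the heart of the lemma, and your proposal does not prove it: you only announce that a Gr\"obner basis computation ``would'' exhibit, inside the ideal generated by $p_1$ and the relations of system (1) (resp.\ (2)), a nonzero polynomial factoring into the forbidden forms $\alpha_i$, $\alpha_i-1$, $\alpha_i-\beta_j$, etc. That is a conjecture about an elimination computation you have not carried out, and the specific shape of the certificate is not guaranteed a priori: even if the geometric incompatibility holds, such a factored element need only lie in the radical (or appear after saturating by the inequations of system (1)), not necessarily in the ideal itself. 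As written, the key step is therefore a gap, not a proof.

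For comparison, the paper closes this case without any further elimination: it shows that each singular point $(\pm y:0:1)$ (resp.\ $(0:\pm z:1)$) is worse than a node. Translating the sextic to such a point, the coefficient $d_{02}$ of $Z^2$ (resp.\ $d_{20}'$ of $Y^2$) is $\frac{c_{20}}{c_{40}^2}(4c_{20}c_{42}-2c_{22}c_{40}-c_{40}^2)$, and a single (performed) divisibility check shows this quantity is divisible by $p_1$ (resp.\ $p_2$), hence vanishes under your hypothesis; so the local ring is $k[[Y,Z]]/(Y^2-Z^{\alpha})$ with $\alpha\ge 4$, a singularity needing at least two blowups, i.e.\ contributing at least $2$ to the genus drop. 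Since $g(\tilde C)-g(H)=10-5=5$ and the node $(0:0:1)$ already contributes $1$, the two points $(\pm y:0:1)$ exhaust the budget $1+2+2=5$, and no further singularity --- in particular none at infinity --- can exist. If you want to salvage your route, you must either actually produce the elimination certificate (preferably by showing the saturation of $\langle p_1,\ \text{system (1) or (2)}\rangle$ by the product of the Howe-type inequations is the unit ideal) or switch to a local argument of the paper's kind.
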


\begin{proof}
We prove that a singular point of the form $(y:0:1)$ or $(0:z:1)$ is a point that needs at least two blowups to resolve its singularity ({e.g.,} a tacnode).
If it is true, then we obtain the statement of Lemma \ref{lem:sing infinity zero} from Lemma \ref{lem:001} and the genus-degree formula, see \cite[Chap.\ V, (3.9.2) and (3.9.5)]{hartshorne}.

First, suppose that $\tilde{C}$ has a singular point of the form $(y:0:1)$ with with $y \in k \smallsetminus \{ 0 \}$.
From Lemma \ref{lem:3sing2}, we have $y^2=-2c_{20}/c_{40}$, $c_{40}\neq 0$, and $c_{40}^2-4c_{60}c_{20}=0$. Note that $c_{02}= -c_{20}\neq 0$ from Lemma \ref{lem:c60} (2).
We denote by $d_{ij}$ the coefficient of $Y^iZ^j$ in $f(Y+y,Z)$.
From a direct computation, we have
\begin{align*}
    d_{02}&=\frac{c_{20}}{c_{40}^2}(4c_{20}c_{42}-2c_{22}c_{40}-c_{40}^2),\\
    f(Y+y,Z)&=d_{20}Y^2+d_{02}Z^2+d_{12}YZ^2+d_{30}Y^3+(\text{higher terms}).
\end{align*}
It follows from $c_{40}^2-4c_{60}c_{20}=0$ and Lemma \ref{lem:p1p2} (1) that $p_1=0$.
A computation shows that $4c_{20}c_{42}-2c_{22}c_{40}-c_{40}^2$ can be divided by $p_1$ as polynomials in $\alpha_i$'s and $\beta_j$'s, whence $d_{02}=0$.
Therefore, we have
\[
f(Y+y,Z)=d_{20}Y^2+d_{12}YZ^2+d_{30}Y^3+d_{40}Y^4+d_{22}Y^2Z^2+d_{04}Z^4+(\text{higher terms}).
\]
Since $(y:0:1)$ is a double point, it follows that $d_{20}\neq 0$.
Hence, it holds that
\[
k[[Y,Z]]/(f(Y+y,Z))\cong k[[Y,Z]]/(Y^2-Z^\alpha)
\]
for some integer $\alpha$ with $\alpha \geq 4$, where $k[[Y,Z]]$ is the ring of formal power series over $k$.
From \cite[Chap.\ V, (3.9.5)]{hartshorne}, we need at least two blowups to resolve the singularity of $(y:0:1)$.

Next, consider the case where $\tilde{C}$ has a singular point of the form $(0:z:1)$ with $z \in k \smallsetminus \{ 0 \}$.
In this case, recall from Lemma \ref{lem:3sing2} that $z^2=-2c_{02}/c_{04}$, $c_{04}\neq 0$, and $c_{04}^2+4c_{06}c_{20}=0$. 
We denote by $d_{ij}'$ the coefficient of $Y^iZ^j$ in $f(Y,Z+z)$.
A computation shows that
\begin{align*}
    d_{20}'&=\frac{c_{20}}{c_{04}c_{06}}(c_{04}c_{06}+2c_{06}c_{22}-c_{04}c_{24}),\\
    f(Y,Z+z)&=d_{20}'Y^2+d_{02}'Z^2+d_{21}'Y^2Z+d_{03}'Z^3+(\text{higher terms}).
\end{align*}
By $c_{04}^2+4c_{06}c_{20}=0$ together with Lemma \ref{lem:p1p2} (1), one has $p_2=0$.
One can also check that $c_{04}c_{06}+2c_{06}c_{22}-c_{04}c_{24}$ can be divided by $p_2$ by a direct calculation, and therefore $d_{20}'=0$.
Hence, from a similar discussion as in the previous paragraph, we obtain
\[
k[[Y,Z]]/(f(Y,Z+z))\cong k[[Y,Z]]/(Y^\alpha-Z^2)
\]
for some integer $\alpha$ with $\alpha \geq 4$, so that we need at least two blowups to resolve the singularity $(0:z:1)$, as desired.
\end{proof}

\begin{lemma}\label{lem:sing infinity zero2}
If the projective singular curve $\tilde{C}$ has a singular point of the form $(y:1:0)$ with $y\neq 0$, then $\tilde{C}$ has exactly three double points: $(0:0:1)$ and $(\pm y: 1:0)$.
\end{lemma}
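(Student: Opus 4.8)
The plan is to obtain the statement almost for free from the structural results already established — Lemma \ref{lem:001}, Lemma \ref{lem:3sing1}, and Lemma \ref{lem:sing infinity zero} — rather than by analysing the local geometry at $(y:1:0)$ directly. Two facts do the work. First, by Lemma \ref{lem:001}, $\tilde{C}$ has exactly $3$ or $5$ singular points, each of multiplicity $2$, and $(0:0:1)$ is one of them (a node). Second, since the homogenization $F$ is a sum of monomials each even in $Y$, even in $Z$, and even in $X$ separately, the singular locus of $\tilde{C}$ is invariant under the group $G=\{\pm 1\}^{3}/\{\pm(1,1,1)\}$ of sign changes $(Y:Z:X)\mapsto(\pm Y:\pm Z:\pm X)$; this is exactly the symmetry exploited in the proof of Lemma \ref{lem:001}. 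So from the hypothesis that $(y:1:0)$ with $y\neq 0$ is singular I get that $(-y:1:0)$ is singular too, and $(-y:1:0)\neq(y:1:0)$; hence $\tilde{C}$ carries the three distinct double points $(0:0:1)$ and $(\pm y:1:0)$, and it remains only to rule out that $\tilde{C}$ has $5$ singular points.

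I would rule out the $5$-point case as follows. If $\tilde{C}$ had $5$ singular points, then the two other than $(0:0:1),(\pm y:1:0)$ would form a $G$-invariant set of size $2$. It cannot be a union of two $G$-fixed points, since the only $G$-fixed points of $\mathbb{P}^{2}$ are $(1:0:0)$, $(0:1:0)$, $(0:0:1)$, and the first two are not on $\tilde{C}$ (because $c_{60},c_{06}\neq 0$); and it cannot be contained in a $G$-orbit of size $4$. So it is a single $G$-orbit of size $2$, hence equals $\{(\pm a:0:1)\}$, $\{(0:\pm z:1)\}$, or $\{(\pm y':1:0)\}$ for some nonzero $a$, $z$, or $y'$. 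The third case contradicts Lemma \ref{lem:3sing1}, which (with $c_{60},c_{06}\neq 0$) forces the singular points of $\tilde{C}$ on the line at infinity to be precisely $(\pm y:1:0)$. In the first two cases $\tilde{C}$ has a singular point of the form $(a:0:1)$ or $(0:z:1)$ with $a$ (resp.\ $z$) nonzero, and then Lemma \ref{lem:sing infinity zero} forces the singular locus of $\tilde{C}$ to be $\{(0:0:1),(\pm a:0:1)\}$ or $\{(0:0:1),(0:\pm z:1)\}$, which does not contain $(y:1:0)$ — a contradiction. Thus $\tilde{C}$ has exactly the three double points $(0:0:1)$ and $(\pm y:1:0)$, as claimed.

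I do not expect a genuine obstacle along this route; the only delicate step is the orbit bookkeeping above, namely confirming that a $5$-singularity configuration cannot absorb a further size-$2$ orbit once Lemmas \ref{lem:3sing1} and \ref{lem:sing infinity zero} are invoked. Should one want a proof independent of Lemma \ref{lem:sing infinity zero}, the natural alternative is to imitate its proof: work in the affine chart $Z=1$, translate $(y:1:0)$ to the origin of the $(Y,X)$-plane, note that $F(Y,1,X)$ is even in $X$ so that the quadratic part of $F(Y+y,1,X)$ has no $YX$-term, and then prove by a direct (computer-assisted) computation — the exact analogue of the identity ``$d_{02}=0$'' used for Lemma \ref{lem:sing infinity zero} — that the $X^{2}$-coefficient $c_{40}y^{4}+c_{22}y^{2}+c_{04}$ vanishes on the locus where $(y:1:0)$ is singular (with $y^{2}$ as in Lemma \ref{lem:3sing1}). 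The tangent cone at $(y:1:0)$ is then a double line, and Weierstrass preparation together with the parity in $X$ gives $k[[Y,X]]/(F(Y+y,1,X))\cong k[[Y,X]]/(Y^{2}-X^{\beta})$ with $\beta\geq 4$, so $(y:1:0)$ requires at least two blowups to resolve; combined with Lemma \ref{lem:001} and the genus-degree formula (\cite[Chap.\ V, (3.9.2) and (3.9.5)]{hartshorne}) this again yields the statement. The hard part of that alternative is the single polynomial identity $c_{40}y^{4}+c_{22}y^{2}+c_{04}=0$, entirely parallel to the one already verified for Lemma \ref{lem:sing infinity zero}.
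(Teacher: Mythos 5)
Your proposal is correct and follows essentially the same route as the paper: Lemma \ref{lem:001} caps the singular locus at five double points and supplies the sign-change symmetry, Lemma \ref{lem:sing infinity zero} excludes extra singularities of the form $(a:0:1)$ or $(0:z:1)$, and the at-most-two-points-at-infinity restriction from Lemma \ref{lem:3sing1} rules out a second pair $(\pm y':1:0)$ (the paper cites \ref{lem:3sing2} here, apparently a slip); your orbit bookkeeping is just a more explicit version of the paper's counting argument, and the size-$4$ orbit case you exclude corresponds to the paper's remark that no singular point $(a:b:1)$ with $a,b\neq 0$ can coexist with $(y:1:0)$.
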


\begin{proof}
From Lemma \ref{lem:001}, the number of singular points of $\tilde{C}$ is less than or equal to $5$. Therefore, the curve $\tilde{C}$ does not have any singular point of the form $(a:b:1)$ with $a,b\neq 0$ if it has a singular point of the form $(y:1:0)$.
Moreover, from Lemma \ref{lem:sing infinity zero}, the curve $\tilde{C}$ does not have any singular point of the form $(y:0:1)$ or $(0:z:1)$.
The remaining case is the case where $\tilde{C}$ has $4$ singular points of the form $(y:1:0)$, but this does not happen by Lemma \ref{lem:3sing2}.
\end{proof}

Putting Lemmas \ref{lem:001}, \ref{lem:3sing1},  \ref{lem:3sing2}, \ref{lem:sing infinity zero}, and \ref{lem:sing infinity zero2} together, we obtain the following:

\begin{proposition}\label{prop:sing}
    For the projective sextic plane curve $\tilde{C}$, only one of the following three cases is satisfied:
    \begin{enumerate}
        \item[(I)] $\tilde{C}$ has exactly five double points: $(0:0:1)$ and $(\pm y: \pm z:1)$,
        \item[(II)] $\tilde{C}$ has exactly three double points: $(0:0:1)$ and $(\pm y:1:0)$,
        \item[(III)] $\tilde{C}$ has exactly three double points: $(0:0:1)$ and $(\pm y: 0: 1)$, or $(0:0:1)$ and $(0:\pm z: 1)$,
    \end{enumerate}
    for some $y,z \in k \smallsetminus \{ 0 \}$.
    Moreover, the case (I) holds generically.
\end{proposition}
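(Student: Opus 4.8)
The plan is to read off the trichotomy directly from the singularity lemmas already established (Lemmas \ref{lem:001}, \ref{lem:3sing1}, \ref{lem:3sing2}, \ref{lem:sing infinity zero}, \ref{lem:sing infinity zero2}), and then to prove that case (I) is generic by a dimension argument on the parameter space of Howe-type quintuples.

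For the trichotomy I would argue as follows. By Lemma \ref{lem:001}, $(0:0:1)$ is always an ordinary node of $\tilde{C}$ and $\#\mathrm{Sing}(\tilde{C})\in\{3,5\}$; moreover, since $c_{60},c_{06}\neq 0$ by Lemma \ref{lem:c60}, the only points of $\tilde{C}$ at infinity are of the form $(y:1:0)$ with $y\neq 0$. Now distinguish three cases. If $\tilde{C}$ has a singular point of the form $(y:0:1)$ or $(0:z:1)$ with nonzero coordinate, then Lemma \ref{lem:sing infinity zero} forces exactly the configuration of case (III) (and Lemmas \ref{lem:3sing2} and \ref{lem:p1p2}(1) forbid a $(y:0:1)$- and a $(0:z:1)$-type singularity occurring together). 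Otherwise, if $\tilde{C}$ has a singular point at infinity, Lemma \ref{lem:sing infinity zero2} forces exactly the configuration of case (II). In the remaining case every singular point other than $(0:0:1)$ is of the form $(a:b:1)$ with $a,b\neq 0$; since $f$ is a polynomial in $Y^2$ and $Z^2$, such a point comes together with $(\pm a:\pm b:1)$, and two distinct such quadruples are disjoint, so $\#\mathrm{Sing}(\tilde{C})\equiv 1\pmod 4$. Combined with $\#\mathrm{Sing}(\tilde{C})\in\{3,5\}$ this gives exactly five double points, i.e.\ case (I). The three cases are mutually exclusive, since they prescribe incompatible numbers and locations of singular points.

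For the genericity statement, let $U\subset\mathbb{A}^5$ be the locus of Howe-type quintuples, which is a nonempty Zariski-open, hence irreducible, subvariety. By the trichotomy, case (I) holds at a point of $U$ unless case (II) or (III) holds there. Put $A:=c_{42}^2-3c_{60}c_{24}$ and let $\Theta$ be the polynomial in the third equation of the first system in Lemma \ref{lem:3sing1}. From Lemma \ref{lem:3sing1}, case (II) at a point implies $\Theta=0$ or $A=0$ there; from Lemma \ref{lem:3sing2} together with Lemma \ref{lem:p1p2}(1) and $c_{02}=-c_{20}$ (Lemma \ref{lem:c60}), and using $\alpha_2\neq\alpha_3$, $\beta_2\neq\beta_3$, case (III) at a point implies $p_1=0$ or $p_2=0$ there. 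Hence the locus in $U$ where case (I) fails is contained in the zero set of the single polynomial $P:=p_1\,p_2\,A\,\Theta$ in $\alpha_1,\alpha_2,\alpha_3,\beta_2,\beta_3$. It then suffices to check that $P$ is not identically zero, which I would verify by substituting one explicit Howe-type quintuple into the formulas for $p_1,p_2$ from Lemma \ref{lem:p1p2} and for the $c_{ij}$ from Theorem \ref{thm:main1} (a short computer check) and confirming that each of $p_1,p_2,A,\Theta$ is nonzero there. Then $V(P)\cap U$ is a proper closed subset of the irreducible variety $U$, so case (I) holds on the dense open complement, which is the assertion.

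The geometric content is entirely supplied by the preceding lemmas, so the only real work is the bookkeeping: pinning down exactly which polynomial equalities are \emph{necessary} for cases (II) and (III) so that their failure is guaranteed by the non-vanishing of $P$. I expect this bookkeeping — tracking the implications ``(II)$\Rightarrow\Theta=0$ or $A=0$'' and ``(III)$\Rightarrow p_1=0$ or $p_2=0$'' through Lemmas \ref{lem:3sing1}, \ref{lem:3sing2}, and \ref{lem:p1p2} — to be the main (though routine) obstacle; once it is in place, the non-vanishing of $P$ is a one-line verification.
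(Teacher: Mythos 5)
Your trichotomy argument is essentially the paper's own proof: the paper states that Proposition \ref{prop:sing} follows by putting Lemmas \ref{lem:001}, \ref{lem:3sing1}, \ref{lem:3sing2}, \ref{lem:sing infinity zero}, and \ref{lem:sing infinity zero2} together, and your case analysis (an axis singularity forces configuration (III) via Lemma \ref{lem:sing infinity zero}, a singularity at infinity forces (II) via Lemma \ref{lem:sing infinity zero2}, and otherwise the remaining singular points avoid the axes and the line at infinity, hence come in quadruples $(\pm a:\pm b:1)$, so the count $\{3,5\}$ from Lemma \ref{lem:001} forces exactly five, i.e. (I)) is precisely that bookkeeping, carried out correctly. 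Where you genuinely differ is the ``generically'' claim: the paper gives no formal argument there, relying on the fact that (II) and (III) are cut out by the equalities of Lemmas \ref{lem:3sing1} and \ref{lem:3sing2} together with the Magma statistics of Table \ref{table:sing}, whereas you collect the necessary conditions into the single polynomial $P=p_1p_2A\Theta$ on the irreducible Howe locus $U$ and reduce genericity to $P\not\equiv 0$ on $U$. That is sound and arguably sharper than the paper: for $p_1,p_2$ non-vanishing is automatic (all their coefficients are $\pm1$, by Lemma \ref{lem:p1p2}), and you are right that $A$ and $\Theta$ require an explicit evaluation --- note that merely exhibiting a type-(I) example does not certify $A\Theta\neq 0$, since $A=0$ and $\Theta=0$ are only necessary for (II), so your plan to evaluate the factors directly is the correct one. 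The one caveat is that a single numerical check fixes one characteristic: done over $\mathbb{Q}$ it certifies $p=0$ and all sufficiently large $p$, so to cover every $p\geq 7$ you would either repeat the check per characteristic or argue about the integer coefficients of $A$ and $\Theta$; since the paper itself is no more rigorous on this point, this is a refinement to record rather than a gap relative to the paper.
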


We confirmed with Magma that, when $p \geq 7$, the projective closure $\tilde{C}$ associated with a point of Howe type has in most cases exactly five double points, as $p$ becomes greater.
See Table \ref{table:sing} for computational results for $7 \leq p \leq 23$.
In Example \ref{ex:sing}, we show an example for each of the cases (I), (II), and (III).

\begin{center}
    \begin{table}[h]
    \centering
    \caption{The number of plane sextic curves $\tilde{C}$ associated to Howe triples $(\alpha_1, \{ \{ \alpha_2,\alpha_3 \}, \{ \beta_2, \beta_3 \} \})$ in Type $(m,n)$, where $\alpha_i,\beta_j \in \mathbb{F}_p$ for $1\leq i \leq 3$ and $2 \leq j \leq 3$, and where ``Type $(m,n)$'' means $\tilde{C}$ has $m$ double points of the form $(a:b:1)$ and $n$ ones of the form $(a:1:0)$, see also Proposition \ref{prop:sing}.}\label{table:sing}
    \vspace{2mm}
    \begin{tabular}{|c|rc|rc|rc|}
    \hline
         $p$ & \multicolumn{2}{c|}{Type $(5,0)$} &  \multicolumn{2}{c|}{Type $(1,2)$} &  \multicolumn{2}{c|}{Type $(3,0)$} \\
         \hline
          $7$ &      3 & (20\%) &    12 & (80\%) &     0 & (0\%)  \\ \hline
         $11$ &    846 & (45\%) &   720 & (38\%) &   324 & (17\%) \\ \hline
         $13$ &   3768 & (54\%) &  2100 & (30\%) &  1062 & (15\%) \\ \hline
         $17$ &  29925 & (66\%) &  9702 & (22\%) &  5418 & (12\%) \\ \hline
         $19$ &  65322 & (70\%) & 17472 & (19\%) & 10026 & (11\%) \\ \hline
         $23$ & 232065 & (76\%) & 45900 & (15\%) & 27270 & (9\%)  \\ \hline
    \end{tabular}
\end{table}
\end{center}


\begin{example}\label{ex:sing}
Let $k=\overline{\mathbb{F}}_{31}$, and let $\tilde{C}:F(Y,Z)=0$ be our sextic curve associated with a point $(\alpha_1,\alpha_2, \alpha_3, \beta_2, \beta_3) \in k^5$ of Howe type.
\begin{enumerate}
    \item[(I)] Put $(\alpha_1,\alpha_2, \alpha_3, \beta_2, \beta_3) =(3,9,27,19,26) \in \mathbb{F}_{31}^5$, the computed sextic is
    \[
    F= 27 Y^6 + 11 Y^4 Z^2 + 18 Y^4 + 25 Y^2 Z^4 + 4 Y^2 + Z^6 + 8 Z^4 + 27 Z^2
    \]
    with $c_{22} = 0$.
    In this case, the projective closure of $\tilde{C}$ has exactly five double points.
    One is $(0:0:1)$, and the others are $(\pm 7 : \pm 14 : 1)$.
    
    \item[(II)] Put $(\alpha_1,\alpha_2, \alpha_3, \beta_2, \beta_3) =(3,25,17,21,18) \in \mathbb{F}_{31}^5$, we can compute $F$ as
    \[
    F = 23 Y^6 + 4 Y^4 Z^2 + 4 Y^4 + 4 Y^2 Z^4 + 19 Y^2 + Z^6 + 22 Z^4 + 12 Z^2.
    \]
    In this case, the projective closure of $\tilde{C}$ has exactly three double points.
    One is $(0:0:1)$, and the others are points at infinity given by $(\pm 13:1:0)$.

    \item[(III)] For $(\alpha_1,\alpha_2, \alpha_3, \beta_2, \beta_3) =(3,2,16,28,9) \in \mathbb{F}_{31}^5$, the computed sextic is
    \[
    F = 23 Y^6 + 4 Y^4 Z^2 + 4 Y^4 + 4 Y^2 Z^4 + 19 Y^2 + Z^6 + 22 Z^4 + 12 Z^2.
    \]
    There are exactly three singularities on the projective closure of $\tilde{C}$:
    One is $(0:0:1)$, and the others are $(0:\pm 8:1)$.
    Each of them is of multiplicity two.

\end{enumerate}
\end{example}




\if 0
\section{Enumeration of superspecial Howe curves}

In this section, we construct an algorithm for enumerating superspeical non-hyperelliptic Howe curves of genus five.
Computational results obtained by our implementation of the algorithm on Magma will be also presented.

\subsection{Algorithm and its implementation}

The complexity of enumerating all s.sp.\ curves of genus 2 is $O(p^3)$.
\[
E_1 : y^2 =(x-\alpha_2)(x-\alpha_3)(x-\beta_2)(x-\beta_3)
\]
\[
C_2 : y^2 = x(x-1)(x-\alpha_1)(x-\alpha_2)(x-\alpha_3)
\]
\[
C_3 : y^2 = x(x-1)(x-\alpha_1)(x-\beta_2)(x-\beta_3)
\]
For each $(\alpha_1,\alpha_2,\alpha_3,\beta_2,\beta_3)$, test whether the $j$-invariant of $E_1$ belongs to the list of supersingular $j$-invariants.


\subsection{Computational results}
\fi

\section{Concluding remarks}

In this paper, we studied non-hyperelliptic Howe curves of genus five.
Such a curve is defined as the normalization of a fiber product of two hyperelliptic curves of genera $g_1$ and $g_2$ sharing exactly $r$ ramification points in $\mathbb{P}^1$, for $(g_1,g_2,r) = (2,2,4)$ or $(1,1,0)$.
We focused on the former case, and explicitly constructed a plane sextic model for Howe curves in the case.
Once the ramification points are given, this sextic model can be computed in a constant number of arithmetics in a field over which the ramification points are defined.
The possible number of singularities on the sextic was also determined, and it was also proved that there are $5$ double points generically.

Our sextic model is useful to analyze Howe curves of genus five as plane singular curves, by constructing their function fields.
For example, one can determine the isomorphy of given two such curves, which derives applications to enumerating curves defined over finite fields, e.g., superspecial curves.
The remaining case $(g_1,g_2,r)=(1,1,0)$ together with such an application will be studied in a separated paper~\cite{MK23-2}.

\bibliography{ref}
\bibliographystyle{plain}

\if 0

\fi

\appendix

\if 0

\section{Proofs of Lemmas \ref{lem:factor} and \ref{lem:factor2}}\label{app:factor}



\if 0
\begin{lemma}\label{lem:factor}
With notation as above, we assume that $b_{20} = - b_{02}$, and that $b_{60}$, $b_{20}$, $b_{06}$, and $b_{02}$ are all non-zero.
If $(\mathrm{deg}H_1, \mathrm{deg} H_2) = (2,4)$, then we have the following five cases:
\begin{enumerate}
    \item[{\bf (A1)}] For $s,t,u,v,w\in k$ with $u,w \neq 0$,
    \[
    \begin{cases}
    H_1 = Y^2 - Z^2,\\
    H_2 = Y^4 + (s Z^2 + t) Y^2 + (u Z^4 + v Z^2 + w),
    \end{cases}
    \]
    whence
    \begin{eqnarray*}
        H_1H_2 \!\!\!&=& \!\!\!Y^6 + (s -1) Y^4 Z^2 +  t Y^4 + (-s + u) Y^2 Z^4 \\
        && + (-t + v) Y^2 Z^2 + w Y^2  - u  Z^6 -v Z^4 - w Z^2.
    \end{eqnarray*} 
    
    \item[{\bf (A2)}] For $s,t,u,v,w \in k$ with $s,t,v,w \neq 0$,
    \[
    \begin{cases}
        H_1 = Y^2 + (s Z^2 + t),\\
        H_2 = Y^4 + (u Z^2 + v)Y^2 + (w Z^4 - v Z^2),
    \end{cases}
    \]
    whence
    \begin{eqnarray*}
        H_1H_2 \!\!\!&=& \!\!\!Y^6 + (s + u) Y^4 Z^2 + (t + v) Y^4 + (s u + w) Y^2 Z^4 \\
        && + (sv + t u - v) Y^2 Z^2 + t v Y^2 + s w  Z^6  + (-s v + t w) Z^4 - t v Z^2.
    \end{eqnarray*} 

    \item[{\bf (A3)}] For $s,t,u \in k \smallsetminus \{ 0 \}$,
    \[
    \begin{cases}
        H_1 = Y^2 + (s Z^2 + t Z + u),\\
        H_2 = Y^4 + ((s-1)Z^2 - t Z + u)Y^2 + (-s Z^4 + t Z^3 - u Z^2),
    \end{cases}  
    \]
    whence
    \begin{eqnarray*}
            H_1 H_2 \!\!\!\!&=& \!\!\!Y^6 + (2s -1) Y^4 Z^2 + 2 u Y^4 + (s^2 - 2s) Y^2 Z^4 \\
            \!\!\!&&\!\!\!+ (2 s u - t^2 - 2 u)Y^2 Z^2 + u^2 Y^2 - s^2 Z^6 + (-2 s u + t^2) Z^4 - u^2 Z^2.\!
    \end{eqnarray*} 

    \item[{\bf (A4)}] For $s,t,u, \in k \smallsetminus \{ 0 \}$,
    \[
    \begin{cases}
        H_2 = Y^2 + s ZY + (t Z^2 + u),\\
        H_3 = Y^4 -s ZY^3  + ((t-1)Z^2 + u) Y^2 + s Z^3Y + (-t Z^4 - u Z^2),
    \end{cases}
    \]
    whence
    \begin{eqnarray*}
            H_1 H_2 &=& Y^6 + (-s^2 + 2 t -1) Y^4 Z^2 + 2 u Y^4 + (s^2 + t^2 - 2 t) Y^2 Z^4 \\
            &&+ (2 t u - 2 u)Y^2 Z^2 + u^2 Y^2 - t^2 Z^6 -2 t u  Z^4 - u^2 Z^2.
    \end{eqnarray*} 

    \item[{\bf (A5)}] For $s,t,u \in k \smallsetminus \{ 0 \}$,
    \[
    \begin{cases}
        H_1 = Y^2 + s Y + (t Z^2 + u),\\
        H_2 = Y^4 - s Y^3 + ((t-1)Z^2 +u)Y^2 + (s Z^2) Y + (-t Z^4 - u Z^2),
    \end{cases}
    \]
    whence
    \begin{eqnarray*}
        H_1H_2 &=& Y^6 + (2 t -1) Y^4 Z^2  + (-s^2 + 2u)Y^4 + (t^2-2t) Y^2 Z^4 \\
        && + (s^2 + 2 t u -2 u) Y^2 Z^2 + u^2 Y^2 - t^2 Z^6 - 2 t u Z^4 - u^2 Z^2.
    \end{eqnarray*} 

    \item[{\bf (A6)}] For $s,t \in k \smallsetminus \{ 0 \}$,
        \[
             \left\{
             \begin{aligned}
             H_1 =& Y^2 + (\varepsilon (s+1) Z + t) Y  + (sZ^2 + \varepsilon tZ ),\\
             H_2 =& Y^4 + (-\varepsilon (s+1) Z -t) Y^3 \\
             & + ((s-s^2) Z^2 + \varepsilon t(2s+1) Z - t^2) Y^2\\
             & + ( \varepsilon s^2(s+1) Z^3 -st(s+2) Z^2 -\varepsilon t^2 (s-1) Z + t^3) Y\\
             & + (-s^3 Z^4 + \varepsilon s^2 t Z^3 + s t^2 Z^2 - \varepsilon t^3 Z)
            \end{aligned}
            \right.
        \]
    with $\varepsilon=\pm 1$ whence
    \begin{eqnarray*}
        H_1H_2 &=& Y^6 + (-2s^2 - 1)Y^4Z^2 - 2 t^2 Y^4 + (s^4 + 2s^2)Y^2Z^4\\
        &&+(-2 t^2 s^2 + 2t^2)Y^2Z^2 + t^4Y^2 - s^4Z^6 + 2 t^2 s^2Z^4 - t^4Z^2
    \end{eqnarray*}
\end{enumerate}
\end{lemma}
\fi

\begin{proof}
    Put
    \begin{eqnarray*}
        H_1 &=& Y^2 + (a_1 Z + a_2) Y  + (a_{3}Z^2 + a_{4}Z + a_{5}),\\
        H_2 &=& Y^4 + (a_{6} Z + a_{7}) Y^3 + (a_{8} Z^2 + a_{9} Z + a_{10}) Y^2\\
        && + (a_{11} Z^3 + a_{12} Z^2 + a_{13} Z + a_{14}) Y\\
        && + (a_{15} Z^4 + a_{16} Z^3 + a_{17} Z^2 + a_{18} Z + a_{19}).
    \end{eqnarray*} 
    First, we see from the coefficients of $Y^5 Z$ and $Y^5$ in $H_1H_2$ that $a_6 = -a_1$ and $a_7 = -a_2$.
    The constant term of $H_1 H_2$ is $a_5 a_{19}$, which is zero by our assumption.
    From this, we divide the cases into the following three cases (1), (2), and (3):
    \begin{enumerate}
    \item[(1)] Assume $a_{19}\neq 0$; then $a_{5}=0$.
    We also have
    \[
    \left\{
     \begin{array}{ll}
        b_{41} = & -2 a_1 a_2 + a_{4} + a_{9} = 0,\\ 
        b_{15} = & a_1a_{15} + a_3a_{11},\\
        b_{14} = & a_1 a_{16} + a_2 a_{15} + a_3a_{12} + a_4a_{11},\\
        b_{13} = & a_{1}a_{17} + a_2a_{16} + a_3a_{13} + a_4a_{12},\\
        b_{12} = & a_1a_{18} + a_2a_{17} + a_3a_{14} + a_4a_{13},\\
        b_{30} = & a_2 a_{10} + a_{14}=0, \\ 
        b_{20} = & a_2 a_{14} + a_{19} \neq 0,\\
        b_{11} = & a_1 a_{19} + a_2 a_{18} + a_{4}a_{14}=0, \\
        b_{10} = & a_{2} a_{19} = 0,\\ 
        b_{06} = & a_{3} a_{15} \neq 0,\\ 
        b_{05} = & a_{3} a_{16} + a_{4}a_{15} = 0,\\ 
        b_{03} = & a_{3} a_{18} + a_{4}a_{17} = 0,\\ 
        b_{02} = & a_{3} a_{19} + a_{4}a_{18} \neq 0,\\ 
        b_{01} = & a_{4} a_{19}=0. 
    \end{array}
    \right.
    \]
    First, it follows from $b_{06} \neq 0$ that $a_3,a_{15} \ne 0$.
    By $a_{19}\neq 0$, we have $a_{2}=a_{4}=0$ by $b_{10}=b_{01}=0$, whence $a_{9}=a_{14}=0$ from $b_{41}=b_{30}=0$.
    Also, the condition $b_{11}=0$ implies $a_1=0$.
    It follows from $b_{05}=b_{03}=0$ that $a_{16}=a_{18}=0$.
    Now, we have $a_1=a_{2} = a_4 = a_9 = a_{13} = a_{14} = a_{16} = 0$, whence $b_{15} = a_{3}a_{11}$, $b_{14}=a_{3}a_{12}$, $b_{13} = a_{3}a_{13}$, $b_{20}=a_{19}$, and $b_{02}=a_3a_{19}$.
    Since $b_{15} = b_{14}=b_{13} =0$, it follows from $a_{3} \neq 0$ that $a_{11}=a_{12}=a_{13}=0$.
    The condition $b_{20} = -b_{02}$ implies $a_{3}=-1$, so that
    \[
    {\bf (A1)} \
    \begin{cases}
    H_1 = Y^2 - Z^2,\\
    H_2 = Y^4 + (a_{8} Z^2 + a_{10}) Y^2 + (a_{15} Z^4 + a_{17} Z^2 + a_{19}).
    \end{cases}
    \]

    \item[(2)] Assume $a_{5} \neq 0$; then $a_{19} =0$.
    In this case, 
    \[
    \left\{
     \begin{array}{ll}
        b_{41} = & -2a_1 a_2 + a_4 + a_9 = 0,\\
        b_{33} = & -a_1 a_3 + a_1a_8 + a_{11}=0,\\
        b_{32} = & -a_1 a_4 + a_1 a_9 - a_2 a_3 + a_2a_8 + a_{12} = 0,\\
        b_{31} = & -a_1a_5 + a_1a_{10} - a_2a_4 + a_2a_9 + a_{13} = 0,\\
        b_{30} = & -a_2 a_5 + a_2 a_{10} + a_{14} = 0, \\  
        b_{23} =& a_1 a_{12} + a_2 a_{11} + a_3 a_9 + a_4 a_8 + a_{16}=0,\\
        b_{21} = & a_1 a_{14} + a_2 a_{13} + a_4 a_{10} + a_5a_9 + a_{18}=0,\\
        b_{20} = & a_2a_{14} + a_5a_{10} \neq 0,\\
        b_{15}=& a_1a_{15} + a_3 a_{11}=0,\\
        b_{14} = & a_1 a_{16} + a_2 a_{15} + a_{3}a_{12} + a_4a_{11},\\
        b_{13} = & a_1a_{17} + a_2a_{16} + a_3a_{13} + a_4a_{12} + a_5a_{11} = 0,\\
        b_{12} = & a_1 a_{18} + a_{2}a_{17} + a_3 a_{14} + a_4 a_{13} + a_5 a_{12} = 0,\\
        b_{11} =& a_2 a_{18} + a_{4} a_{14} + a_{5} a_{13} = 0,\\ 
        b_{10} =& a_{5} a_{14} = 0, \\ 
        b_{06} =& a_{3} a_{15} \neq 0,\\ 
        b_{05} = & a_3 a_{16} + a_4 a_{15}=0,\\
        b_{03} = & a_3a_{18} + a_{4}a_{17} + a_5a_{16}=0,\\
        b_{02} = & a_4a_{18} + a_5a_{17} \neq 0,\\
        b_{01} =&a_{5} a_{18}=0. 
    \end{array}
    \right.
    \]
    We first obtain $a_3,a_{15} \neq 0$ from $b_{06} \neq 0$.
    By $b_{10} =b_{01} = 0$ and $a_{5} \neq 0$, we have $a_{14}=a_{18}=0$.
    Therefore, one has $b_{11}=a_{5} a_{13}=0$, and thus $a_{13}=0$.
    For $a_{13} = a_{14}=a_{18}=0$, it follows that $b_{30} = -a_2 a_{5} + a_2 a_{10}$, $b_{20} = a_5 a_{10}$, and $b_{02} = a_5 a_{17}$, whence $a_5,a_{10},a_{17} \neq 0$.
    The condition $b_{20}=-b_{02}$ implies $a_{17} = -a_{10}$.
    By $b_{30}=0$, we have $a_2=0$ or $a_{10}=a_{5}$.    
\begin{enumerate}
    \item[(2-1)] Consider the case $a_2=0$. 
    In this case,
    the condition $b_{41}=a_4 + a_9 = 0$ implies $a_{9}=-a_{4}$, and $b_{12}=a_5 a_{12} = 0$ and $a_{5} \neq 0$ implies $a_{12}=0$.
    Moreover, it follows from $b_{32}=-2 a_1 a_4 = 0$ that $a_1=0$ or $a_{4}=0$.    
    \begin{enumerate}
        \item[(2-1a)] If $a_1 = 0$, then $b_{33} = a_{11} = 0$.
        Also by $b_{21} = a_4 (a_{10}- a_5) = 0$, we have $a_{4} = 0$ or $a_{10} = a_5$.        
        \begin{itemize}
            \item If $a_{4} = 0$, then $a_9$ = 0, and hence $b_{23} = a_{16} = 0$, so that
            \[
            {\bf (A2)} \
             \begin{cases}
             H_1 = Y^2 + (a_{3}Z^2 + a_{5}), \\
             H_2 = Y^4 + (a_{8} Z^2 + a_{10}) Y^2 + (a_{15} Z^4 - a_{10} Z^2).
             \end{cases}
             \]
             \item If $a_{4} \neq 0$, then $a_{10}=a_{5}$ and thus $a_{17} = -a_5$.
             Since $b_{03} = a_4 (-a_{5}) + a_5 a_{16} = a_5 (a_{16}-a_4)$, it follows from $a_{5} \neq 0$ that $a_{16}=a_{4}$.
             In this case, the conditions $b_{05}=0$ and $b_{23} =-a_3 a_4 + a_4 a_8 + a_4 = a_4 (a_8 - a_3 +1 )$ imply $a_{15}=-a_{3}$ and $a_{8}=a_{3}-1$ respectively, so that
             \[
             {\bf (A3)} \ \left\{
             \begin{aligned}
             H_1 =& Y^2 + (a_{3}Z^2 + a_{4} Z + a_{5}), \\
             H_2 =& Y^4 + ((a_{3}-1) Z^2 -a_{4} Z+ a_{5}) Y^2\\
             &+ (-a_{3} Z^4 + a_{4} Z^3 - a_{5} Z^2).
             \end{aligned}
             \right.
             \]
        \end{itemize}

        \item[(2-1b)] Assume $a_1 \neq 0$; then $a_{4} = 0$, and therefore $a_9 = 0$.
        Also by $b_{03} = a_5 a_{16} = 0$ and $a_5 \neq 0$, we have $a_{16} =0$.
        It also follows from $b_{31} = -a_1a_5 + a_1a_{10}=0$ that $a_{10}=a_{5}$, and thus $a_{17} =- a_5$.
        In this case, since $b_{13}= a_{5} (a_{11}-a_{1})=0$ and $a_5 \neq 0$, one has $a_{11}=a_1$, which also implies
        \begin{itemize}
            \item $b_{33}=a_1(-a_{3}+a_{8}+1)$, so that $a_{8}=a_{3}-1$.
            \item $b_{15}=a_1(a_{15}+a_{3})$, so that $a_{15}=-a_{3}$.
        \end{itemize}
        Here, $H_1$ and $H_2$ are written as follows:
         \[
         {\bf (A4)} \ \left\{
         \begin{aligned}
         H_1 =& Y^2 + a_1 Z Y+ (a_{3}Z^2 + a_{5}), \\
         H_2 =& Y^4 -a_1 ZY^3 + ((a_{3}-1) Z^2 +a_{5}) Y^2 \\
         &+ a_1Z^3 Y + (-a_{3} Z^4 - a_{5} Z^2).
         \end{aligned}
         \right.
         \]
    \end{enumerate}
    \item[(2-2)] If $a_2 \neq 0$, then $a_{10} = a_{5}$, so that $a_{17} = -a_5$.
    The conditions $b_{31} = 0$ and $b_{21}=0$ imply $a_{9} = a_{4}$ and $a_{9} = - a_{4}$ respectively.
    Therefore $a_{4}=a_{9}=0$, and $a_{16}= 0$ by $b_{03}=0$.
    By $b_{41} = 0$, we also have $a_1 =0$, and thus it follows from $b_{33} = 0$ that $a_{11}=0$.
    The condition $b_{12}=-a_2a_{5} + a_5 a_{12} =0$ implies $a_{12} = a_2$.
    Also from $b_{14} = a_2 (a_{15} + a_{3})$ and $b_{32} = a_2(-a_3+a_8+1)=0$, one has $a_{15} = -a_{3}$ and $a_{8} = a_{3} - 1$.
    As a consequence, we obtain
    \begin{eqnarray*}
    {\bf (A5)} \
    \left\{ 
    \begin{aligned}
        H_1 =& Y^2 + a_2 Y  + (a_{3}Z^2 + a_{5}),\\
        H_2 =& Y^4 -a_2Y^3 + ( (a_{3}-1) Z^2 +  a_{5}) Y^2\\
        &+ a_{2} Z^2 Y + (-a_{3} Z^4 - a_{5} Z^2).
    \end{aligned}
    \right.
    \end{eqnarray*} 
    \end{enumerate}
    \item[(3)] Assume $a_{5} = a_{19} =0$.
    We use $a_2$ and $a_3$ to represent the other coefficients.
    First, expanding $H_1H_2$ induces
    \[
    \left\{
    \begin{array}{ll}
    b_{33} =& -a_1 a_{3} + a_1 a_{8} + a_{11} = 0, \\ 
    b_{41} =& -2 a_1 a_2 + a_4 + a_9 = 0,  \\ 
    b_{32} = & -a_1a_4 + a_1a_9 - a_2a_3 + a_2a_8 + a_{12},\\
    b_{23} = & a_1a_{12} + a_{2}a_{11} + a_3a_9 + a_4a_8 + a_{16},\\
    b_{05} = & a_3 a_{16} + a_{4}a_{15}, \\
    b_{31} =& a_1a_{10} - a_2a_4 + a_2a_9 + a_{13},\\
    b_{13} =& a_1 a_{17} + a_2 a_{16} + a_3 a_{13} + a_4 a_{12},\\
    b_{30} =& a_2 a_{10} + a_{14} = 0,\\ 
    b_{21} =& a_1 a_{14} + a_2 a_{13} + a_4a_{10} +a_{18} = 0,\\ 
    b_{12} =& a_1 a_{18} + a_2 a_{17} + a_3 a_{14} + a_4 a_{13} =0,\\
    b_{03} =&a_{3} a_{18} + a_{4} a_{17} = 0,\\ 
    b_{20} = & a_2 a_{14} \neq 0,\\ 
    b_{11} =& a_2 a_{18} + a_{4} a_{14} = 0,\\ 
    b_{06}=& a_{3}a_{15} \neq 0,\\ 
    b_{02} = & a_{4} a_{18}  \neq 0. 
    \end{array}
    \right.
    \]
    In particular, by $b_{06},b_{20},b_{02} \neq 0$, one has $a_2,a_3,a_4,a_{14},a_{15},a_{18} \neq 0$.
    The condition $b_{30} = 0$ implies $a_{14} = -a_2 a_{10}$ and thus $a_{10} \neq 0$.
    Thus, it follows from $b_{11} = a_2 (a_{18} - a_{4}a_{10}) = 0$ that $a_{18} = a_{4} a_{10}$.
    Moreover, we have the following:
    \begin{itemize}
        \item Since $a_{4} \neq 0$ from $b_{02} \neq 0$, it follows from $b_{03} = a_{4}(a_{3}a_{10} + a_{17}) = 0$ that $a_{17} = - a_{3}a_{10}$.
        \item By $b_{20} = -a_{2}^2a_{10}$ and $b_{02}=a_{4}^2a_{10}$ together with $b_{20}=-b_{02} \neq 0$, one has $a_{4} = \varepsilon a_2$ with $\varepsilon=\pm 1$ and $a_{10} \neq 0$.
        Therefore, we also obtain $a_{18} = \varepsilon a_2a_{10}$.
    \end{itemize}
    Here, we can write $b_{21}$ and $b_{12}$ as follows:
    \begin{eqnarray*}
        b_{21} &=& a_1 (-a_2a_{10}) + a_2 a_{13} + (\varepsilon a_2)a_{10} + \varepsilon a_2 a_{10} \\
        &=& a_2 (a_{13} - a_1 a_{10} + 2 \varepsilon a_{10}),\\
        b_{12} &=& a_{1}(\varepsilon a_{2}a_{10}) + a_2 (-a_3a_{10}) + a_3 (-a_2a_{10}) + (\varepsilon a_2 ) a_{13}\\
        &=& \varepsilon a_2 ( a_1a_{10} - 2 \varepsilon a_3 a_{10} +a_{13}),
    \end{eqnarray*}
    By $a_2 \neq 0$, we have $a_{13} = a_1 a_{10} - 2 \varepsilon a_{10} = - a_1 a_{10} + 2 \varepsilon a_3 a_{10}$.
    From this together with $a_{10} \neq 0$, it follows that $a_1 - 2 \varepsilon = -a_1 + 2 \varepsilon a_3$, so that $a_1 = \varepsilon (a_3 + 1)$.
    Here, we can write
    \begin{eqnarray*}
         a_9 &=& 2 a_1 a_2 - a_4 = 2 \varepsilon (a_3+1) a_2 - \varepsilon a_2 =  \varepsilon a_2 (2a_3 + 1),\\
         a_{11} &=& a_1a_3 - a_1 a_8 = a_1 (a_3-a_8) = \varepsilon (a_3+1) (a_3-a_8),\\
         a_{13} &=& a_1 a_{10} - 2 \varepsilon a_{10} = \varepsilon (a_3 + 1)a_{10} - 2 \varepsilon  a_{10} = \varepsilon a_{10}(a_3 -1),
    \end{eqnarray*}
    where we used $b_{33}=b_{41}=0$.
    For these formulae of $a_1$, $a_{9}$, $a_{11}$, and $a_{13}$, it also follows from $b_{32}=0$ that
    \begin{eqnarray*}
         a_{12} &=& a_1a_4 - a_1a_9 + a_2a_3 - a_2a_8\\
         &=& \varepsilon (a_3+1) \varepsilon a_2 - \varepsilon(a_3+1) \varepsilon a_{2}(2a_3+1) +a_2a_3-a_2a_8\\
        &=& a_2a_3 + a_2 - a_2 (2a_3^2 + 3 a_3 + 1) + a_2 a_3 - a_2 a_8\\
        &=& -2 a_2 a_3^2 - a_2 a_3 - a_2 a_8 = -a_2 (2a_3^2 + a_3 + a_8).
    \end{eqnarray*}
    We can also compute
    \begin{eqnarray*}
        b_{31} &=& \varepsilon (a_3+1) a_{10} - a_2 \varepsilon a_2 + a_2 (\varepsilon a_2 (2a_3 + 1) )  + \varepsilon a_{10}(a_3-1) \\
        &=& 2 \varepsilon a_3 a_{10} + 2 \varepsilon a_2^2 a_3  = 2 \varepsilon a_3 (a_2^2 + a_{10}),
        \end{eqnarray*}
        so that $a_{10} = -a_2^2$ by $a_3 \neq 0$, and thus $a_{13} = -\varepsilon a_2^2 (a_3-1)$, $a_{14} = a_2^3$, $a_{17} = a_2^2a_3$, and $a_{18}= - \varepsilon a_2^3$.
        By $b_{23} = 0$, we can write
        \begin{eqnarray*}
            a_{16} &=& -a_1a_{12} - a_2 a_{11} - a_3 a_9 - a_4a_8\\
            &=& -\varepsilon (a_3 + 1) (-a_2 (2a_3^2+a_3+a_8)) -a_2 \varepsilon (a_3 + 1)(a_3 - a_8) \\
            &&- a_3 \varepsilon a_2 (2a_3 + 1) - \varepsilon a_2 a_8\\
            &=& 2 \varepsilon a_2a_3^3 + 2 \varepsilon a_2 a_3 a_8 - \varepsilon a_2 a_3 + \varepsilon a_2 a_8.
        \end{eqnarray*}
        On the other hand, $b_{13}$ is computed as
        \begin{eqnarray*}
            b_{13} 
            &=& \varepsilon(a_3+1)(a_2^2 a_3) + a_2 a_{16} + a_3 (-\varepsilon a_2^2 (a_3-1))\\
            &&+ \varepsilon a_2 (-a_2 (2a_3^2+a_3+a_8)) \\
            &=&-2\varepsilon a_2^2 a_3^2 + \varepsilon a_2^2a_3 - \varepsilon a_2^2a_8 + a_2a_{16},
        \end{eqnarray*}
        and thus $a_{16} = 2\varepsilon a_2 a_3^2 - \varepsilon a_2a_3 + \varepsilon a_2a_8$ by $a_2 \neq 0$.
        These two representations of $a_{16}$ imply $2\varepsilon a_2 a_3 (a_3^2 - a_3 + a_8 ) = 0$, so that $a_8 = a_3 - a_3^2$.
        Therefore, one obtains $ a_{11} = \varepsilon  a_3^2(a_3+1)$, $a_{12} =  -a_2 (2a_3+a_3^2) = - a_2a_3 (a_3 + 2)$, and $a_{16}= 2\varepsilon a_2 a_3^2 - \varepsilon a_2a_3 + \varepsilon a_2 (a_3-a_3^2) = \varepsilon a_2 a_3^2$.
        It also follows from 
        \[
       b_{05} =  a_3 (\varepsilon a_2 a_3^2) + \varepsilon a_2 a_{15} =\varepsilon a_2(a_3^2 + a_{15})
        \]
        that $a_{15} = - a_3^3$.
        Consequently, we can write $H_1$ and $H_2$ as follows:
        \[
             {\bf (A6)} \
             \left\{
             \begin{aligned}
             H_1 =& Y^2 + (\varepsilon (a_3+1) Z + a_2) Y  + (a_{3}Z^2 + \varepsilon a_{2}Z ),\\
             H_2 =& Y^4 + (-\varepsilon (a_3+1) Z -a_2) Y^3 \\
             & + ((a_3-a_3^2) Z^2 + \varepsilon a_2(2a_3+1) Z - a_{2}^2) Y^2\\
             & + ( \varepsilon  a_3^2(a_3+1) Z^3 -a_2a_3(a_3+2) Z^2 -\varepsilon a_2^2 (a_3-1) Z + a_{2}^3) Y\\
             & + (-a_3^3 Z^4 + \varepsilon a_2 a_3^2 Z^3 + a_2^2 a_3 Z^2 - \varepsilon a_2^3Z).
            \end{aligned}
            \right.
        \]
    \if 0
    \begin{enumerate}
        \item[(3-2)] If $a_{12} = - a_6$, we have
        \[
        \left\{
        \begin{array}{ll}
        -2 a_5 a_6 - a_6 + a_{21} = 0 &  \mbox{from the coefficient of $Y^4 Z$},\\
        -a_5 a_{11} + a_5 a_{20} + a_{25} = 0 & \mbox{from the coefficient of $Y^3Z^3$}\\
        -a_5 a_6 a_{22} - 2 a_6 a_{22} + a_6 a_{27} = 0&  \mbox{from the coefficient of $Y^2 Z$},\\
        -a_5 a_6 a_{22} - 2 a_6 a_{11}a_{22} - a_6 a_{27} =0 & \mbox{from the coefficient of $YZ^2$},
        \end{array}
        \right.
        \]
        whose third and fourth equations together with $a_6 \neq 0$ imply that $a_{27} = a_{5}a_{22}+2a_{22} = -a_5 a_{22} - 2 a_{11} a_{22}$.
        Since $a_{5}+2 = -a_5 - 2 a_{11}$ by $a_{22} \neq 0$, we also have $a_5 = - a_{11} - 1$.
        Moreover,
        \begin{eqnarray*}
            a_{27} &=& (-a_{11}-1)a_{22} + 2 a_{22} = -a_{11}a_{22} + a_{22},\\
            a_{21} &=& 2a_5 a_6 + a_6 = 2(-a_{11}-1)a_6 + a_6 = -2 a_6a_{11} - a_6,\\
            a_{25} &=& a_5 a_{11} - a_{5}a_{20} = a_5 (a_{11}-a_{20}) = - (a_{11}+1)(a_{11}-a_{20}).
        \end{eqnarray*}
        For these values $a_5$, $a_{21}$, $a_{25}$, and $a_{27}$, one has
        \[
        \left\{
        \begin{array}{ll}
        2a_6a_{11}^2 + a_6a_{11} + a_6 a_{20} + a_{26} = 0 &  \mbox{from the coefficient of $Y^3 Z^2$},\\
       -2a_6^2a_{11} - 2a_{11}a_{22} = 0 & \mbox{from the coefficient of $Y^3Z$}.
        \end{array}
        \right.
        \]
        This implies $a_{26} = - 2 a_6 a_{11}^2 - a_6 a_{11} - a_6 a_{20}$, and $a_{22} = - a_{6}^2$ by $a_{11} \neq 0$, so that $a_{27} = -a_{11}a_{22} + a_{22} = a_6^2 a_{11} - a_6^2$.
        Replacing $a_{22}$, $a_{26}$, and $a_{27}$ in $H_1 H_2$ by these formulae in $a_6$, $a_{11}$, and $a_{20}$ deduces the following:
        \[
        \left\{
        \begin{array}{ll}
       2a_6a_{11}^3 + 2a_6a_{11}a_{20} - a_6 a_{11} + a_6 a_{20} + a_{32} = 0 &  \mbox{from the coefficient of $Y^2 Z^3$},\\
       2a_6^2 a_{11}^2 - a_6^2 a_{11} + a_6^2 a_{20} + a_6 a_{32} = 0 & \mbox{from the coefficient of $YZ^3$},\\
       -a_6 a_{31} + a_{11}a_{32} = 0 & \mbox{from the coefficient of $Z^5$},
        \end{array}
        \right.
        \]
        It follows from $b_{23}=b_{13}=0$ and $a_6 \neq 0$ that
        \begin{eqnarray*}
                   a_{32} &=& -2a_6a_{11}^3 - 2a_6a_{11}a_{20} + a_6 a_{11} - a_6 a_{20}\\
                   &=& -2a_6 a_{11}^2 + a_6 a_{11} - a_6a_{20},
        \end{eqnarray*}
        whence $a_{20} = a_{11} - a_{11}^2$ also by $a_{11} \neq 0$.
        From this, one also has
        \begin{eqnarray*}
            a_{25} &=& - (a_{11}+1)(a_{11}-a_{20}) = -a_{11}^2(a_{11}+1),\\
            a_{26} &=& - 2 a_6 a_{11}^2 - a_6 a_{11} - a_6 (a_{11}-a_{11}^2) =-a_6a_{11} (a_{11} + 2),\\
            a_{32} &=& -2a_6 a_{11}^2 + a_6 a_{11} - a_6 (a_{11}-a_{11}^2) = -a_6 a_{11}^2.
        \end{eqnarray*}
        Moreover, we have $b_{05}=-a_{6}a_{31} + a_{11} (-a_6 a_{11}^2) = -a_6 (a_{31} + a_{11}^3)=0$, so that $a_{31} = - a_{11}^3$.
        It follows that
        \[
             {\bf (A6)} \
             \left\{
             \begin{aligned}
             H_1 =& Y^2 + ((-a_{11}-1) Z + a_6) Y + (a_{11} Z^2 - a_6 Z), \\
             H_2 =& Y^4 + ( (a_{11} + 1) Z - a_6)Y^3 \\
             &+ ((-a_{11}^2 + a_{11})Z^2 + (-2a_6 a_{11} - a_6)Z - a_6^2) Y^2\\
             &+ ((-a_{11}^3 - a_{11}^2) Z^3 + (-a_6 a_{11}^2 - 2 a_6a_{11})Z^2 + (a_6^2 a_{11} - a_6^2)Z + a_6^3)Y\\
             &- a_{11}^3Z^4 - a_6 a_{11}^2 Z^3 + a_6^2 a_{11} Z^2 + a_6^3 Z.
            \end{aligned}
            \right.
             \]
             
        \item[(3-1)] If $a_{12} = a_6$, we have
        \[
        \left\{
        \begin{array}{ll}
        -2 a_5 a_6 \textcolor{red}{+} a_6 + a_{21} = 0 &  \mbox{from the coefficient of $Y^4 Z$},\\
        -a_5 a_{11} + a_5 a_{20} + a_{25} = 0 & \mbox{from the coefficient of $Y^3Z^3$}\\
        -a_5 a_6 a_{22} \textcolor{red}{+} 2 a_6 a_{22} + a_6 a_{27} = 0&  \mbox{from the coefficient of $Y^2 Z$},\\
        \textcolor{red}{+}a_5 a_6 a_{22} - 2 a_6 a_{11}a_{22} \textcolor{red}{+} a_6 a_{27} =0 & \mbox{from the coefficient of $YZ^2$},
        \end{array}
        \right.
        \]
        whose third and fourth equations together with $a_6 \neq 0$ imply that $a_{27} = a_{5}a_{22}\textcolor{red}{-}2a_{22} = -a_5 a_{22} \textcolor{red}{+} 2 a_{11} a_{22}$.
        Since $a_{5}\textcolor{red}{-}2 = -a_5 \textcolor{red}{+} 2 a_{11}$ by $a_{22} \neq 0$, we also have $a_5 = \textcolor{red}{+} a_{11} \textcolor{red}{+} 1$.
        Moreover,
        \begin{eqnarray*}
            a_{27} &=& (\textcolor{red}{+}a_{11}\textcolor{red}{+}1)a_{22} \textcolor{red}{-} 2 a_{22} = \textcolor{red}{+}a_{11}a_{22} \textcolor{red}{-}a_{22},\\
            a_{21} &=& 2a_5 a_6 \textcolor{red}{-} a_6 = 2(\textcolor{red}{+}a_{11}\textcolor{red}{+}1)a_6 \textcolor{red}{-} a_6 = \textcolor{red}{+}2 a_6a_{11} \textcolor{red}{+} a_6,\\
            a_{25} &=& a_5 a_{11} - a_{5}a_{20} = a_5 (a_{11}-a_{20}) = \textcolor{red}{+} (a_{11}+1)(a_{11}-a_{20}).
        \end{eqnarray*}
        For these values $a_5$, $a_{21}$, $a_{25}$, and $a_{27}$, one has
        \[
        \left\{
        \begin{array}{ll}
        2a_6a_{11}^2 + a_6a_{11} + a_6 a_{20} + a_{26} = 0 &  \mbox{from the coefficient of $Y^3 Z^2$},\\
       \textcolor{red}{+}2a_6^2a_{11} \textcolor{red}{+} 2a_{11}a_{22} = 0 & \mbox{from the coefficient of $Y^3Z$}.
        \end{array}
        \right.
        \]
        This implies $a_{26} = - 2 a_6 a_{11}^2 - a_6 a_{11} - a_6 a_{20}$, and $a_{22} = - a_{6}^2$ by $a_{11} \neq 0$, so that $a_{27} = \textcolor{red}{+}a_{11}a_{22} \textcolor{red}{-} a_{22} = \textcolor{red}{-}a_6^2 a_{11} \textcolor{red}{+} a_6^2$.
        Replacing $a_{22}$, $a_{26}$, and $a_{27}$ in $H_1 H_2$ by these formulae in $a_6$, $a_{11}$, and $a_{20}$ deduces the following:
        \[
        \left\{
        \begin{array}{ll}
       \textcolor{red}{-}2a_6a_{11}^3 \textcolor{red}{-} 2a_6a_{11}a_{20} \textcolor{red}{+} a_6 a_{11} \textcolor{red}{-} a_6 a_{20} + a_{32} = 0 &  \mbox{from the coefficient of $Y^2 Z^3$},\\
       \textcolor{red}{-}2a_6^2 a_{11}^2 \textcolor{red}{+} a_6^2 a_{11} \textcolor{red}{-} a_6^2 a_{20} + a_6 a_{32} = 0 & \mbox{from the coefficient of $YZ^3$},\\
       \textcolor{red}{+}a_6 a_{31} + a_{11}a_{32} = 0 & \mbox{from the coefficient of $Z^5$},
        \end{array}
        \right.
        \]
        It follows from $b_{23}=b_{13}=0$ and $a_6 \neq 0$ that
        \begin{eqnarray*}
                   a_{32} &=& \textcolor{red}{+}2a_6a_{11}^3 \textcolor{red}{+} 2a_6a_{11}a_{20} \textcolor{red}{-} a_6 a_{11} \textcolor{red}{+}a_6 a_{20}\\
                   &=& \textcolor{red}{+}2a_6 a_{11}^2 \textcolor{red}{-} a_6 a_{11} \textcolor{red}{+} a_6a_{20},
        \end{eqnarray*}
        whence $a_{20} = a_{11} - a_{11}^2$ also by $a_{11} \neq 0$.
        From this, one also has 
        \begin{eqnarray*}
            a_{25} &=& \textcolor{red}{+} (a_{11}+1)(a_{11}-a_{20}) = \textcolor{red}{+}a_{11}^2(a_{11}+1),\\
            a_{26} &=& - 2 a_6 a_{11}^2 - a_6 a_{11} - a_6 (a_{11}-a_{11}^2) =-a_6a_{11} (a_{11} + 2),\\
            a_{32} &=& \textcolor{red}{+}2a_6 a_{11}^2 \textcolor{red}{-} a_6 a_{11} \textcolor{red}{+} a_6 (a_{11}-a_{11}^2) = \textcolor{red}{+}a_6 a_{11}^2.
        \end{eqnarray*}
        Moreover, we have $b_{05}=\textcolor{red}{+}a_{6}a_{31} + a_{11} (\textcolor{red}{+}a_6 a_{11}^2) = \textcolor{red}{+}a_6 (a_{31} + a_{11}^3)=0$, so that $a_{31} = - a_{11}^3$.
        It follows that
        \[
             {\bf (A6)'} \
             \left\{
             \begin{aligned}
             H_1 =& Y^2 + ((\textcolor{red}{+}a_{11}\textcolor{red}{+}1) Z + a_6) Y + (a_{11} Z^2 \textcolor{red}{+} a_6 Z), \\
             H_2 =& Y^4 + ( (\textcolor{red}{-}a_{11} \textcolor{red}{-}1) Z - a_6)Y^3 \\
             &+ ((-a_{11}^2 + a_{11})Z^2 + (\textcolor{red}{+}2a_6 a_{11} \textcolor{red}{+} a_6)Z - a_6^2) Y^2\\
             &+ ((\textcolor{red}{+}a_{11}^3 \textcolor{red}{+} a_{11}^2) Z^3 + (-a_6 a_{11}^2 - 2 a_6a_{11})Z^2 + (\textcolor{red}{-}a_6^2 a_{11} \textcolor{red}{+} a_6^2)Z + a_6^3)Y\\
             &- a_{11}^3Z^4 \textcolor{red}{+} a_6 a_{11}^2 Z^3 + a_6^2 a_{11} Z^2 \textcolor{red}{-} a_6^3 Z.
            \end{aligned}
            \right.
             \]
    \end{enumerate}
    \fi

    \end{enumerate}

    \if 0
    In this case, we have
    \[
    \left\{
     \begin{array}{cl}
        -a_3^2 + a_7 = 0 & \mbox{from the coefficient of $Y^4 Z^6$}, \\
        -2 a_3 a_4 + a_8 = 0 & \mbox{from the coefficient of $Y^4 Z^5$} ,\\
        -a_3 a_7 = 0 & \mbox{from the coefficient of $Y^3 Z^9$} .
    \end{array}
    \right.
    \]
    The conditions $b_{46} = -a_3^2+a_7=0$ and $b_{39}=-a_3a_7=0$ imply $a_3=a_7=0$.
    For $a_3=0$, it also follows from $b_{45}=-2a_3 a_4 + a_8=0$ that $a_8=0$.
    \[
    \left\{
     \begin{array}{cl}
        a_9 a_{18} = a_9 a_{23}= a_9 a_{29} = 0 & \mbox{from the coefficients of $Y^2 Z^8$, $YZ^9$, and $Z^{10}$}, \\
        -a_4a_9 + a_4a_{18} = 0 & \mbox{from the coefficient of $Y^3 Z^6$} ,\\
        a_{4} a_{23} + a_9 a_{19} + a_{10}a_{18} = 0 & \mbox{from the coefficient of $Y^2 Z^7$} ,\\
        a_4 a_{29} + a_9 a_{24} + a_{10} a_{23} = 0 & \mbox{from the coefficient of $Y Z^{8}$} ,\\
        a_9 a_{30} + a_{10} a_{29} = 0 & \mbox{from the coefficient of $Z^{9}$},\\
        -a_4^2 + a_9 + a_{18} = 0 & \mbox{from the coefficient of $Y^4Z^{4}$},\\
        -a_4 a_{10} + a_4 a_{19} - a_5 a_9 + a_5 a_{18} + a_{23} = 0 & \mbox{from the coefficient of $Y^3Z^5$},\\
        -2 a_4 a_5 + a_{10} + a_{19} = 0 & \mbox{from the coefficient of $Y^4Z^3$}.
    \end{array}
    \right.
    \]
    If $a_9 \neq 0$, then $a_{18}=0$, and thus $-a_4 a_9 = 0$.
    This implies $a_4=0$, but we have $b_{44}=a_9 = 0$, a contradiction.
    Therefore, $a_9=0$, so that $a_4 a_{18}=0$ and $-a_4^2 + a_{18}=0$.
    Hence, $a_4=a_{18}=0$.
    Moreover, $b_{35} = 0$ implies $a_{23}=0$, and $b_{43} = a_{10} + a_{19} = 0$ implies $a_{19}=-a_{10}$.
    \fi
\end{proof}

\if 0
\begin{lemma}\label{lem:factor2}
With notation as above, we assume that $b_{20} = - b_{02}$, and that $b_{60}$, $b_{20}$, $b_{06}$, and $b_{02}$ are all non-zero.
If $(\mathrm{deg}H_1, \mathrm{deg}H_2) = (3,3)$, then we have the following six cases:
\begin{enumerate}
    \item[{\bf (B1)}] For $s,t \in k \smallsetminus \{ 0 \}$,
    \[
    \left\{
    \begin{aligned}
    H_1=&Y^3 + (sZ + t)Y^2 -Z^2Y + (-sZ^3 - tZ^2),\\
    H_2=&Y^3 + (-sZ - t)Y^2 + (-s^2Z^2 + 2s tZ - t^2)Y\\
    &+ (s^3 Z^3 - s^2 t Z^2 - s t^2Z + t^3),
    \end{aligned}
    \right.
    \]
    whence
    \begin{eqnarray*}
            H_1H_2 &=& Y^6 + (-2s^2 - 1)Y^4Z^2 - 2t^2Y^4 + (s^4 + 2s^2)Y^2Z^4 \\
            &&+ (-2s^2t^2 + 2t^2)Y^2Z^2 + t^4Y^2 - s^4Z^6 + 2s^2t^2Z^4 - t^4Z^2.
    \end{eqnarray*}

    \item[{\bf (B2)}] For $s,t,u,v\in k \smallsetminus \{ 0 \}$,
    \[
        \left\{
        \begin{aligned}
           H_1=&Y^3 +ZY^2 + (sZ^2 + t )Y + (sZ^3 + tZ),\\
           H_2=&Y^3 -ZY^2 + (uZ^2  + v)Y - (u Z^3 + v Z),
        \end{aligned}
        \right.
    \]
    whence
    \begin{eqnarray*}
            H_1 H_2 &=& Y^6 + (s+u -1) Y^4 Z^2 + (t + v) Y^4 + (s u -s-u) Y^2 Z^4 \\
            &&+ (s v + t u - t - v )Y^2 Z^2 + tv Y^2\\
            &&- su Z^6 +(-sv-tu)  Z^4 - tv Z^2.
    \end{eqnarray*}
    
    \item[{\bf (B3)}] For $s,t,u\in k \smallsetminus \{ 0 \}$,
     \[
     \left\{
     \begin{aligned}
     H_1=&Y^3 + (\varepsilon Z + s)Y^2 + (t Z^2 + \varepsilon s Z + u )Y + \varepsilon (tZ^3 + uZ),\\
     H_2=&Y^3 + (-\varepsilon Z - s)Y^2 + (tZ^2 + \varepsilon sZ  + u)Y + \varepsilon (-t Z^3 - u Z)
     \end{aligned}
     \right.
    \]
    with $\varepsilon=\pm 1$, whence
    \begin{eqnarray*}
    H_1 H_2 &=& Y^6 + (2 t -1) Y^4 Z^2 + (-s^2 + 2u) Y^4 + (t^2- 2 t) Y^2 Z^4 \\
    &&+ (s^2 + 2 t u - 2u )Y^2 Z^2 + u^2 Y^2 - t^2 Z^6 -2 t u  Z^4 - u^2 Z^2.
    \end{eqnarray*}
  
    \item[{\bf (B4)}] For $s,t,u\in k \smallsetminus \{ 0 \}$,
     \[
    \begin{cases}
        H_1 = Y^3 + \varepsilon ZY^2 + (s Z^2 + t Z + u) Y + \varepsilon ( s Z^3 + tZ^2 + u Z),\\
        H_2 = Y^3 - \varepsilon ZY^2 + (s Z^2 - t Z + u) Y - \varepsilon ( s Z^3 - tZ^2 + u Z)\\
    \end{cases}
    \]
    with $\varepsilon=\pm 1$, whence
    \begin{eqnarray*}
            H_1 H_2 &=& Y^6 + (2 s - 1) Y^4 Z^2 + 2 u Y^4 + (s^2 - 2s) Y^2 Z^4 \\
            &&+ (2 s u - t^2 - 2 u )Y^2 Z^2 + u^2 Y^2 \\
            &&- s^2 Z^6 +(-2su+t^2) Z^4 - u^2 Z^2.
    \end{eqnarray*}

     \item[{\bf (B5)}] For $s,t, u \in k \smallsetminus \{ 0 \}$,
    \begin{eqnarray*}
        H_1 &=& Y^3 +Z Y^2 + (sZ^2 + t)Y+ (s Z^3  + t Z),\\
        H_2 &=& Y^3 - Z Y^2 + (uZ^2 + t)Y+ (-u Z^3 - t Z),
    \end{eqnarray*}
     whence
    \begin{eqnarray*}
            H_1 H_2 &=& Y^6 + (s + u -1) Y^4 Z^2 + 2t Y^4 + (s u - s - u) Y^2 Z^4 \\
            &&+ (s t + t u - 2t )Y^2 Z^2 + t^2 Y^2 - su Z^6 +(-st-tu)  Z^4 - t^2 Z^2.
    \end{eqnarray*}

     \item[{\bf (B6)}] For $s,t,u,v \in k$ with $u,v \neq 0$,
     \[
    \begin{cases}
        H_1 = Y^3 + (sZ)Y^2 + (t Z^2 +u) Y + (v Z^3 + \varepsilon u Z),\\
        H_2 =  Y^3 - (sZ)Y^2 + (t Z^2 +u) Y - (v Z^3 + \varepsilon u Z)\\
    \end{cases}
    \]
    with $\varepsilon=\pm 1$, whence
    \begin{eqnarray*}
            H_1 H_2 &=& Y^6 + (-s^2 + 2t) Y^4 Z^2 + 2u Y^4 + (-2sv + t^2) Y^2 Z^4 \\
            &&+ (-2\varepsilon su + 2tu) Y^2 Z^2 + u^2 Y^2 - v^2 Z^6 -2 \varepsilon u v Z^4 - u^2 Z^2.
    \end{eqnarray*}
    
\end{enumerate}
\end{lemma}
\fi

\begin{proof}
Putting
\begin{eqnarray*}
    H_1 &=& Y^3 + (a_1 Z + a_2) Y^2 + (a_3Z^2 + a_4Z + a_5)Y\\
    && + (a_6 Z^3 + a_7 Z^2 + a_8 Z + a_9),\\
    H_2 &=& Y^3 + (a_{10}Z + a_{11})Y^2 + (a_{12}Z^2 + a_{13}Z + a_{14})Y\\
    &&+ (a_{15}Z^3 + a_{16}Z^2 + a_{17}Z + a_{18}),
\end{eqnarray*} 
for $a_i \in k$ and expanding $H_1H_2$, we have
\[
    \left\{
    \begin{array}{ll}
        b_{06}=&a_6a_{15} \neq 0,\\
       b_{51}=& a_1 + a_{10} = 0,\\
       b_{33} =&a_1a_{12} + a_3a_{10} + a_6 + a_{15}=0,\\
       b_{50}=& a_2 + a_{11} = 0,\\
       b_{41}=&a_1a_{11} + a_2a_{10} + a_4 + a_{13}= 0,\\
       b_{32}= & a_1a_{13} + a_2a_{12} + a_3a_{11} + a_4a_{10} + a_7 + a_{16}=0,\\
       b_{31} = & a_1a_{14} + a_2a_{13} + a_4a_{11} + a_5a_{10} + a_8 + a_{17}=0,\\
       b_{10}=& a_5 a_{18} + a_{9}a_{14} = 0,\\
       b_{01}=&a_8a_{18} + a_9a_{17} =0,\\
       b_{00}=& a_9 a_{18} = 0.
    \end{array}
    \right.
\]
In particular, the condition $b_{06} \neq 0$ implies $a_6,a_{15} \neq 0$.
By $b_{51} = b_{50} = 0$, we also obtain $a_{10}=-a_{1}$ and $a_{11} = -a_2$.
Hence, we have $b_{41} = -2a_1a_2 + a_4 + a_{13}$, and it follows from $b_{33} = b_{32} =b_{31}= 0$ that
\begin{eqnarray*}
a_{15} &=& a_1 a_3 - a_1 a_{12} - a_{6},\\
a_{16} &=& -a_1a_{13} - a_2a_{12} + a_2 a_3 + a_1a_4 - a_7,\\
a_{17} &=& -a_1a_{14} - a_2a_{13} + a_{2}a_4 + a_1a_{5} - a_8,
\end{eqnarray*}
which will be used through the proof.
    \if 0
    \[
    \left\{
     \begin{array}{ll}
        -a_2a_5 + a_2a_{14} + a_{9} + a_{18} = 0& \mbox{from the coefficient of $Y^3$} ,\\
        -a_2a_9 + a_2a_{18} + a_5a_{14}\neq 0 & \mbox{from the coefficient of $Y^2$} ,\\
        a_4 a_{18} + a_5a_{17} + a_8 a_{14} + a_9 a_{13}=0 & \mbox{from the coefficient of $YZ$},\\
       a_7a_{18} + a_8a_{17} + a_9a_{16} \neq 0 & \mbox{from the coefficient of $Z^2$},\\
    \end{array}
    \right.
    \]
    \fi
Here, we focus on $b_{00} = a_9 a_{18} = 0$.
It suffices to consider the cases: (1) $a_{18} \neq 0$ and (2) $a_{18}=a_9= 0$.
Note that the case $a_{9} \neq 0$ is equivalent to the case (1), by replacing $H_1$ and $H_2$.

\begin{enumerate}
    \item[(1)] Assume $a_{18}\neq 0$; then $a_{9}=0$.
    From an expansion of $H_1H_2$, we have the following:
    \[
        \left\{
        \begin{array}{ll}
            b_{05} =& a_6a_{16} + a_7a_{15}=0,\\
            b_{13} = &a_3 a_{17} + a_4a_{16} + a_5a_{15} + a_6a_{14} + a_7a_{13} +a_8a_{12}=0,\\
            b_{30}=&-a_2a_5 + a_2a_{14} + a_{18} = 0,\\
            b_{21} = & a_1a_{18} + a_2a_{17} + a_4a_{14} + a_5a_{13} + a_8a_{11} =0,\\
            b_{12} =&a_3a_{18} + a_4a_{17} + a_5a_{16} + a_7a_{14} + a_8a_{13}=0,\\
            b_{03} =& a_6a_{18} + a_7a_{17} + a_8a_{16}=0,\\
            b_{20}=&a_2a_{18} + a_5a_{14}\neq 0 ,\\
            b_{11}=&a_4 a_{18} + a_5a_{17} + a_8 a_{14}=0 ,\\
            b_{02}=& a_7a_{18} + a_8a_{17} \neq 0 .\\
        \end{array}
        \right.
    \]
    First, it follows from $b_{10}=b_{01}=0$ that $a_5 = a_8 = 0$.
    By $b_{20} = - b_{02}$ and $b_{30} = b_{11} = 0$, one has $a_7 = - a_2$, and $a_{18} = - a_2 a_{14}$ and $a_4=0$, respectively.
    Also by $b_{41} =0$, we have $a_{13} = 2 a_1 a_2$.
    Eliminating $a_4$, $a_{5}$, $a_7$, $a_8$, $a_9$, $a_{13}$, and $a_{18}$, we obtain   
    \if 0
    \[
    \left\{
     \begin{array}{ll}
         2a_1^2a_2 - a_2a_3 + a_2a_{12} - a_2 + a_{16}=0 & \mbox{from the coefficient of $Y^3Z^2$},\\
    \end{array}
    \right.
    \]
    \fi
    \begin{eqnarray*}
        b_{13} &=& -2a_1a_2^2 + a_3a_{17} + a_6a_{14},\\
        b_{12} &=& a_3(-a_2 a_{14}) + (-a_2)a_{14} = -a_2a_{14}(a_3+1),\\
        b_{21} &=& a_1(-a_2a_{14}) + a_2a_{17} = a_2 (a_{17} - a_1 a_{14}),\\
        b_{03} &=& a_6(-a_2a_{14}) + (-a_2)a_{17} = -a_2 (a_{17} + a_{6} a_{14}),\\
        b_{20} &=&a_2(-a_2a_{14}) = -a_2^2 a_{14} ,\\
        b_{02} &=& (-a_2) (-a_2a_{14}) = a_2^2 a_{14}.
    \end{eqnarray*}
    By $b_{02} \neq 0$, one has $a_2 ,a_{14} \neq 0$.
    Therefore, it follows from $b_{12} = b_{21} = b_{03} = 0$ that $a_{3} = -1$, $a_{17} = a_1 a_{14}$, and $a_{17} = -a_6 a_{14}$.
    From these two representations of $a_{17}$ together with $a_{14} \neq 0$, we have $a_6 = - a_1$, whence $a_1 \neq 0$ and $a_{17} = a_1 a_{14}$.
    Therefore, we obtain the following:
    \begin{itemize}
        \item By $a_3=-1$, $a_4=0$, $a_{6} = -a_1$, $a_7 = - a_2$, and $a_{13} = 2a_1 a_{2}$, one has
        \begin{eqnarray*}
            a_{15} &=&  a_1 (-1) - a_1 a_{12} - (-a_1) = - a_1 a_{12}, \\
            a_{16} &=&  -a_1(2a_1a_{2}) - a_2a_{12} + a_2 (-1)  - (-a_2) = -2a_1^2a_2 - a_2 a_{12}.
        \end{eqnarray*}
        These representations of $a_{15}$ and $a_{16}$ imply
        \begin{eqnarray*}
                b_{05} &=&  (-a_1) (-2a_1^2a_2 - a_2 a_{12}) + (- a_2) (-a_1a_{12})\\
                &=& 2 a_1^3 a_2 + 2 a_1 a_2 a_{12} = 2 a_1 a_2 (a_1^2 + a_{12}) .
        \end{eqnarray*}
        Hence $a_{12} = - a_1^2$ by $b_{05}=0$ with $a_1,a_2 \neq 0$, and thus $a_{15} = a_1^3$ and $a_{16} = -a_1^2a_2$.
        \item It follows from $a_3=-1$, $a_{6} = -a_1$, and $a_{17} = a_1 a_{14}$ that
        \[
            b_{13} = -2a_1a_2^2 + (-1)(a_1a_{14}) + (-a_1)a_{14}= -2a_1 (a_2^2 + a_{14}).
        \]
        Since $a_1 \neq 0$, one has $a_{14} = - a_2^2$, whence $a_{17} = -a_1a_2^2$ and $a_{18} = a_2^3$.
    \end{itemize}
    Consequently, $H_1$ and $H_2$ are written as follows:
    \[
    {\bf (B1)} \ \left\{
    \begin{aligned}
    H_1=&Y^3 + (a_1Z + a_2)Y^2 + (-Z^2)Y + (-a_1Z^3 - a_2Z^2),\\
    H_2=&Y^3 + (-a_1Z - a_2)Y^2 + (-a_1^2Z^2 + 2a_1a_2Z - a_2^2)Y\\
    &+ (a_1^3 Z^3 - a_1^2 a_2 Z^2 - a_1 a_2^2Z + a_2^3).
    \end{aligned}
    \right.
    \]

    \item[(2)] Assume $a_{9} =a_{18}=0$.
    In this case, an expansion of $H_1 H_2$ deduces
    \[
        \left\{
        \begin{array}{ll}
        b_{30}= & -a_2a_5 + a_2a_{14}=0, \\
        b_{23} = & -a_1a_7 + a_1a_{16} - a_2a_6 + a_2a_{15} + a_3a_{13} +a_4a_{12}=0,\\ 
        b_{15}= & a_3a_{15} + a_6a_{12}=0 ,\\ 
        b_{14}= & a_3a_{16} + a_4a_{15} + a_6a_{13} + a_{7}a_{12}= 0, \\
        b_{13} = & a_3a_{17} + a_4a_{16} + a_5a_{15} + a_6a_{14} + a_7a_{13} + a_8a_{12} = 0,\\ 
        b_{21} = & -a_2a_8 + a_2a_{17} + a_4a_{14} + a_5a_{13} =0,\\ 
        b_{20}= & a_5 a_{14} \neq 0, \\
        b_{12} = & a_4a_{17} + a_5a_{16} + a_7a_{14} + a_8a_{13} = 0,\\
        b_{11}= & a_5a_{17} + a_8a_{14}=0,\\ 
        b_{05}= & a_6a_{16} + a_7 a_{15} = 0,\\ 
        b_{03}= & a_7a_{17} + a_8a_{16}=0,\\ 
        b_{02}= & a_8 a_{17} \neq 0 
        \end{array}
        \right.
    \]
    In particular, one has $a_5, a_{8}, a_{14}, a_{17} \neq 0$.
    Recall from the beginning of the proof that $a_{15} = a_1 a_3 - a_1 a_{12} - a_{6}$, and thus
    \begin{eqnarray*}
        b_{15} &=& a_3 (a_1 a_3 - a_1 a_{12} - a_{6}) + a_6 a_{12} = (a_1 a_3 - a_6)(a_3 -a_{12}).
    \end{eqnarray*}
    Focusing on the equality $b_{30} = a_2 (a_{14}-a_5) = 0$, we divide the cases into the following three cases (2-1), (2-2), and (2-3):
    \begin{enumerate}
        \item[(2-1)] Assume $a_{14} \neq a_5$; then $a_2 = 0$, so that $a_{17}= - a_1 a_{14}  +a_1 a_5 - a_8$.
        It follows that
        \begin{eqnarray*}
            b_{11} &=& a_5 (- a_1 a_{14} +a_1 a_5 - a_8) + a_8 a_{14} = (a_1 a_5 - a_8)(a_{5}-a_{14}) .
        \end{eqnarray*}
        By $b_{11} =0$, we have $a_8 = a_1 a_5$, and thus $a_{17} = -a_1 a_{14}$ (this also follows from $b_{11}=a_5 (a_{17} + a_1 a_{14})=0$).
        Also by $b_{41}=0$ together with $a_2 = 0$, one has $a_{13} = -a_4$.
        It also follows from $b_{21}= a_4 (a_{14} - a_5) = 0$ that $a_4 = 0$, whence $a_{13}=0$.
        Substituting $0$ into $a_2$, $a_4$, and $a_{13}$, one obtains $a_{16} = -a_{7}$.
        Since $b_{12} = a_5 a_{16} + a_{7} a_{14} = a_7 (a_{14} - a_5) = 0$, we have $a_7=0$, and thus $a_{16} = 0$.
        Moreover, it follows that
        \[
        \begin{aligned}
            b_{13} =& a_3a_{17} + a_5a_{15} + a_6a_{14}+ a_8a_{12}  \\
            = &  a_3(- a_1 a_{14}) + a_5 (a_1 a_3 - a_1 a_{12} - a_6) + a_6 a_{14} + (a_1a_5)a_{12}\\
            =& a_1 a_3 a_5 - a_1 a_3 a_{14} - a_5 a_6 + a_6 a_{14} = (a_1a_3 -a_6)(a_5 - a_{14}) = 0,
        \end{aligned}
        \]
        which implies $a_6 = a_1 a_3$.
        Thus, we also have $a_{15} = -a_1a_{12}$, so that
        \[
           {\bf (B2)} \ \left\{
           \begin{aligned}
           H_1=&Y^3 + a_1 ZY^2 + (a_3 Z^2 + a_5 )Y + (a_1a_3Z^3 + a_1a_5Z),\\
           H_2=&Y^3 -a_1ZY^2 + (a_{12}Z^2  + a_{14})Y - (a_1a_{12} Z^3 + a_1 a_{14} Z).
           \end{aligned}
           \right. 
        \]
        Note that $a_3, a_{12} \neq 0$ by $a_6,a_{15} \neq 0$.
        By $b_{20} = a_5 a_{14}$ and $b_{02} = -a_1^2 a_5 a_{14}$ together with $b_{20} = - b_{02}$, we obtain $a_1^2=1$.
        Therefore, exchanging $H_1$ and $H_2$ if necessary, we may assume $a_1 = 1$, as desired.
    
        \item[(2-2)] Assume $a_{14} = a_5$, but $a_2 \neq 0$.
        It follows from $b_{11} = a_5 (a_{17} + a_{8})=0$ and $a_5 \neq 0$ that $a_{17} = - a_8$, and thus we have $b_{31} = - a_2 (a_4-a_{13})=0$.
        By $a_2 \neq 0$, we have $a_{13}=a_4$, so that $a_{16} = - a_2 a_{12}+a_2 a_3 -a_7$ and  $b_{41} = - 2 (a_1 a_2 -a_4)=0$.
        The latter equality implies $a_4 = a_1a_2$, whence $a_{13} = a_1 a_2$.
        Here, we have the following:
        \begin{itemize}
            \item It follows from
            \[
                b_{21} = -a_2 a_8 + a_2 (-a_8) + (a_1 a_2) a_5 + a_5 (a_1a_2) = 2 a_2 (a_1 a_5 - a_8) = 0
            \]
            that $a_8=a_{1}a_5$, and thus $a_{17} = -a_8 = -a_1 a_5$.
            \item It also follows from
            \[
            \begin{aligned}
                b_{12}=&(a_1a_2)(- a_8) + a_5 ( - a_2 a_{12}+a_2 a_3 -a_7) + a_7 a_5 + a_8 (a_1a_2) \\
                =& a_2a_5 (a_3 - a_{12}) = 0
            \end{aligned}  
            \]
            that $a_{12} = a_3$.
            This implies $a_{15}=a_1 a_3 - a_1 a_{12} - a_6 =  - a_6$ and $a_{16} = -a_7$.
            Replacing $a_{16}$ and $a_{17}$ in $b_{03}$ by $-a_7$ and $-a_8$ respectively induces $b_{03} = -2 a_7 a_8 = 0$.
            Thus $a_7 = 0$ by $a_8 \neq 0$, so that $a_{16} = 0$.
            It follows from
            \begin{eqnarray*}
                b_{23}&=& -a_2a_6 + a_2 (-a_6) + a_3 (a_1a_2) + (a_1a_2) a_3 \\
                &=& 2 a_2 (a_1a_3-a_6)=0,
            \end{eqnarray*}
            that $a_6 = a_1 a_3$, and thus $a_1,a_3 \neq 0$ and $a_{15} = - a_6 = - a_1a_3$.
        \end{itemize}
        Now, $H_1$ and $H_2$ are represented as follows:
        \[
           {\bf (B3)} \ \left\{
           \begin{aligned}
           H_1=&Y^3 + (a_1 Z + a_2)Y^2 + (a_3 Z^2 + a_1 a_2 Z + a_5 )Y \\
           &+ (a_1a_3Z^3 + a_1a_5Z),\\
           H_2=&Y^3 + (-a_1Z - a_2)Y^2 + (a_3Z^2 + a_1a_2Z  + a_5)Y\\
           &+ (-a_1 a_3 Z^3 - a_1 a_5 Z).
           \end{aligned}
           \right. 
        \]
        By $b_{20} = a_5 a_{14} = a_5^2$ and $b_{02} = a_8 a_{17} = a_8(-a_8) = -a_8^2 = - a_1^2a_5^2$ together with $b_{20} = - b_{02}$, we obtain $a_1^2=1$.
    
        \item[(2-3)] Assume $a_{14}=a_5$ and $a_2 = 0$.
        \begin{itemize}
            \item By $a_2 = 0$, the condition $b_{41} = a_4 + a_{13}=0$ implies $a_{13}=-a_4$.
            \item The equality $a_{14} = a_5$ implies $b_{11} = a_5 (a_8 + a_{17}) = 0$, so that $a_{17}=-a_8$ by $a_5 \neq 0$. 
            It also follows from $b_{03} = a_8 (a_{16}-a_7)$ and $a_8 \neq 0$ that $a_{16} = a_7$, and hence 
            \begin{eqnarray*}
                b_{05} &=& a_6 a_7 + a_7 a_{15} = a_7 (a_6 + a_{15}) \\
                 &=&   a_7 ( a_6 + (a_1a_3 - a_1 a_{12}-a_6) ) = a_1 a_7 (a_3-a_{12}).
            \end{eqnarray*}
        \end{itemize}
        From $a_2 = 0$, $a_{13}=-a_4$, and $a_{16}=a_7$, we have $b_{23} =a_4  (a_{12} - a_3)$ and $b_{32} = 2 (a_7 - a_1 a_4) =0$, so that $a_7=a_1 a_4$; moreover, $a_{16}=a_1a_4$.
        Here, we obtain
        \begin{eqnarray*}
            b_{14} &=& a_3 (a_1a_4) + a_4 (a_1a_3-a_1a_{12}-a_6) +a_6 (-a_4) + (a_1 a_4)a_{12} \\
            &=& 2 a_1 a_3 a_4  -2 a_4 a_6 = 2a_4 (a_1a_3 - a_6),\\
            b_{13} &=& a_3 (-a_8) + a_4 (a_1 a_4) +a_5 (a_1a_3-a_1a_{12}-a_6)+ a_6 a_{5} \\
            &&+ (a_1a_4) (-a_{4}) + a_8 a_{12} \\
            &=& a_1 a_5 a_3  - a_1 a_5 a_{12} - a_3 a_8 +a_8 a_{12} = (a_1 a_5 - a_8)(a_3 - a_{12}),\\
            b_{12} &=&a_4 (-a_8) + a_5 (a_1 a_4)  + (a_1 a_4) a_5 + a_8 (-a_4) \\
            &=&2a_1a_4a_5 - 2a_4a_8 = 2 a_4 (a_1 a_5 - a_8),\\
            b_{05} &=& a_1^2a_4 (a_3-a_{12}).
        \end{eqnarray*}
        Recall that $b_{15} = (a_1a_3-a_6)(a_3-a_{12})$, from the paragraph just before (2-1).
        Focusing on the condition $b_{23} = a_4 (a_{12}-a_3) = 0$, the cases are further divided into the following three cases:
        \begin{itemize}
            \item[(2-3a)] If $a_4 \neq 0$, then it follows from $b_{23} =b_{14} =b_{12} = 0$ that $a_{12} = a_3$, $a_6 = a_1 a_3$, and $a_8= a_1 a_5$, which implies $a_{15} = - a_1 a_3$, $a_{17} = -a_1 a_5$, and $a_3 \neq 0$ by $a_6 \neq 0$.
            Here, $H_1$ and $H_2$ are given by
            \begin{eqnarray*}
                {\bf (B4)} \ \left\{
                \begin{aligned}
                H_1 =& Y^3 +a_1 Z Y^2 + (a_3Z^2 + a_4 Z + a_5)Y\\
                &+ (a_1 a_3 Z^3 + a_1a_4 Z^2 + a_1 a_5 Z),\\
                H_2 =& Y^3 -a_1 Z Y^2 + (a_3Z^2 - a_4 Z + a_5)Y\\
                &+ (-a_1 a_3 Z^3 + a_1a_4 Z^2 - a_1 a_5 Z).
                \end{aligned}
                \right.
            \end{eqnarray*}
            Similarly to (2-2), we obtain $a_1^2 = 1$.
            
            \item[(2-3b)] If $a_4 = 0$ but $a_{12} \neq a_{3}$, then $a_7 = a_{13} = a_{16} = 0$.
            It also follows from $b_{13}  =b_{15}= 0$ that $a_8 = a_1 a_5$ and $a_6 = a_1 a_3$, so that $a_{17} = -a_1 a_5$ and $a_{15} = -a_1a_{12}$.
            Note that $a_3,a_{12} \neq 0$ by $a_6,a_{15} \neq 0$.
            We can write $H_1$ and $H_2$ as
            \begin{eqnarray*}
               {\bf (B5)}  \left\{
                \begin{aligned}
                H_1 =& Y^3 +a_1 Z Y^2 + (a_3Z^2 + a_5)Y+ (a_1a_3 Z^3  + a_1a_5 Z),\\
                H_2 =& Y^3 -a_1 Z Y^2 + (a_{12}Z^2 + a_5)Y- (a_1a_{12} Z^3 + a_1a_5 Z).
                \end{aligned} \right.
            \end{eqnarray*}
            By a discussion similar to (2-2), we may suppose $a_1 = 1$.
             
            \item[(2-3c)] If $a_4 = 0$ and $a_{12} =a_3$, then we have $a_7 = a_{13} = a_{16} = 0$ and $a_{15} = a_1 a_3 - a_1 a_3 - a_6 = -a_6$.
            In this case, it follows that
            \begin{eqnarray*}
                  {\bf (B6)}  \left\{
                \begin{aligned}
                H_1 =& Y^3 +a_1 Z Y^2 + (a_3Z^2 + a_5)Y+ (a_6 Z^3  + a_8 Z),\\
                H_2 =& Y^3 -a_1 Z Y^2 + (a_3Z^2 + a_5)Y+ (-a_6 Z^3 - a_8 Z).
                \end{aligned}\right.
            \end{eqnarray*}
            Since $b_{20} = a_5^2$ and $-b_{02} = -a_8^2$ are both non-zero, and equal to each other, one has $a_5,a_8 \neq 0$ and $a_5^2=a_8^2$.
                
            \end{itemize}
        \if 0
        From $a_{16} = -a_7 = a_7$, we have $a_7 = a_{16}=0$.
        This implies $b_{12} = a_4 a_{17} + a_8 a_{13} = - 2a_4 a_8 = 0$, so that $a_4 = 0$ by $a_{8} \neq 0$, and hence $a_{13} = -a_4=0$.
        Here, one obtains
           \[
    \left\{
     \begin{array}{ll}
      -a_1 a_3 + a_1 a_{12} + a_{6} + a_{15} = 0& \mbox{from the coefficient of $Y^3 Z^3$} ,\\
      a_3a_{15} + a_6a_{12}=0 &\mbox{from the coefficient of $YZ^5$},\\
    -a_3a_8 + a_5a_{15} + a_6a_{5} + a_8a_{12} = 0 & \mbox{from the coefficient of $YZ^3$}.
    \end{array}
    \right.
    \]
    By $b_{33} = 0$, we have $a_{15} = a_1 a_3 - a_1 a_{12} -a_6$, which implies
    \begin{eqnarray*}
        b_{15} &=& a_{3}(a_1 a_3 - a_1 a_{12} -a_6) + a_6 a_{12} = (a_1a_3 - a_6)(a_3 - a_{12}),\\
        b_{13} &=&-a_3a_8 + a_5(a_1 a_3 - a_1 a_{12} -a_6) + a_6a_{5} + a_8a_{12} \\
        &=& a_1 a_3 a_5 - a_3 a_8 - a_{1}a_5 a_{12} + a_8 a_{12} = (a_1 a_5 - a_8)(a_3 - a_{12}).
    \end{eqnarray*}
        \begin{enumerate}
            \item[(2-3-2)] If $a_3 = a_{12}$, then $b_{33} = a_6 + a_{15}=0$ and thus $a_{15}=-a_6$.
            \begin{eqnarray*}
        H_1 &=& Y^3 + a_1 Z Y^2 + (a_3Z^2 + a_5)Y + (a_6 Z^3 + a_8 Z),\\
        H_2 &=& Y^3 - a_{1}Z Y^2 + (a_{3}Z^2 + a_{14})Y+ (-a_{6}Z^3 -a_8Z)
    \end{eqnarray*} 

        \end{enumerate}
        \fi
    \end{enumerate}
    \end{enumerate}
\end{proof}
\fi
\section{Details of some computations in Prop.\ \ref{prop:irr}}\label{app:eta}

\subsection{Case (I)}
In this case, we have
\[
\begin{aligned}
\eta_1= & \alpha_1 \alpha_2 \alpha_3 \beta_2 + \alpha_1 \alpha_2 \alpha_3 \beta_3 - \alpha_1 \alpha_2 \alpha_3 - \alpha_1 \alpha_2 \beta_2 \beta_3 - \alpha_1 \alpha_3 \beta_2 \beta_3\\
& +\alpha_1 \beta_2 \beta_3 - \alpha_2^2 \alpha_3 \beta_3 + \alpha_2^2 \beta_2 \beta_3 - \alpha_2 \alpha_3 \beta_2^2 + \alpha_2 \alpha_3 \beta_2\\
& + \alpha_2 \alpha_3 \beta_3 - \alpha_2 \beta_2 \beta_3 + \alpha_3 \beta_2^2 \beta_3 - \alpha_3 \beta_2 \beta_3,
\end{aligned}
\]
\[
\begin{aligned}
\eta_2 =& \alpha_1 \alpha_2 \alpha_3 \beta_2 + \alpha_1 \alpha_2 \alpha_3 \beta_3 - \alpha_1 \alpha_2 \alpha_3 - \alpha_1 \alpha_2 \beta_2 \beta_3 - \alpha_1 \alpha_3 \beta_2 \beta_3\\
& + \alpha_1 \beta_2 \beta_3 - \alpha_2^2 \alpha_3 \beta_3 + \alpha_2^2 \alpha_3 + \alpha_2^2 \beta_2 \beta_3 - \alpha_2^2 \beta_2 - \alpha_2 \alpha_3 \beta_2^2 \\
& + \alpha_2 \beta_2^2 + \alpha_3 \beta_2^2 \beta_3 - \beta_2^2 \beta_3,
\end{aligned}
\]
\[
\begin{aligned}
\eta_3 =& \alpha_1 \alpha_2^2 \alpha_3 - \alpha_1 \alpha_2^2 \beta_2 - \alpha_1 \alpha_2 \alpha_3 + \alpha_1 \alpha_2 \beta_2^2 - \alpha_1 \beta_2^2 \beta_3 + \alpha_1 \beta_2 \beta_3\\
& - \alpha_2^2 \alpha_3 \beta_3 + \alpha_2^2 \beta_2 \beta_3 - \alpha_2 \alpha_3 \beta_2^2 + \alpha_2 \alpha_3 \beta_2 + \alpha_2 \alpha_3 \beta_3 \\
& - \alpha_2 \beta_2 \beta_3 + \alpha_3 \beta_2^2 \beta_3 - \alpha_3 \beta_2 \beta_3,
\end{aligned}
\]
\[
\begin{aligned}
\eta_2' = & \alpha_1 \alpha_2 \alpha_3  \beta_2 + \alpha_1 \alpha_2 \alpha_3  \beta_3 - \alpha_1 \alpha_2 \alpha_3 - \alpha_1 \alpha_2  \beta_2  \beta_3 - \alpha_1 \alpha_3  \beta_2  \beta_3\\
& +  \alpha_1 \beta_2 \beta_3 + \alpha_2^2 \alpha_3 \beta_2 - \alpha_2^2 \alpha_3 \beta_3 - \alpha_2^2 \beta_2^2 + \alpha_2^2 \beta_2 \beta_3\\
& - 2 \alpha_2 \alpha_3 \beta_2^2 - \alpha_2 \alpha_3 \beta_2 \beta_3 + \alpha_2 \alpha_3\beta_2 + \alpha_2 \alpha_3 \beta_3 + \alpha_2 \beta_2^3\\
& + \alpha_2 \beta_2^2 \beta_3 - \alpha_2 \beta_2 \beta_3 + 2 \alpha_3 \beta_2^2 \beta_3 - \alpha_3 \beta_2 \beta_3 - \beta_2^3 \beta_3
\end{aligned}
\]
with $\eta_3' =\alpha_2 - \beta_2$, whence
 \[
    \begin{cases}
        \eta_1 - \eta_1' = -\beta_3 (\alpha_3 - \beta_2) (\alpha_2 - \beta_3)(\alpha_2 - \beta_2),\\
        \eta_1 -\eta_2' = -\beta_2 (\alpha_3 - \beta_2) (\alpha_2 - \beta_3)(\alpha_2 - \beta_2),\\
        \eta_2 - \eta_1' = -(\beta_3-1) (\alpha_3 - \beta_2) (\alpha_2 - \beta_3)(\alpha_2-\beta_2),\\
        \eta_2 - \eta_2' = -(\beta_2-1) (\alpha_3 - \beta_2) (\alpha_2 - \beta_3)(\alpha_2-\beta_2),\\
        \eta_3 - \eta_1' = (\alpha_1 - \beta_3)(\alpha_3 - \beta_2)(\alpha_2 - \beta_3) (\alpha_2 - \beta_2),\\
        \eta_3 - \eta_2' = (\alpha_1 - \beta_2)(\alpha_3 - \beta_2) (\alpha_2 - \beta_3)(\alpha_2-\beta_2).
    \end{cases}
\]

\subsection{Case (II)}

In this case, we have
\[
\begin{aligned}
\eta_1 =& \alpha_1 \alpha_2 \alpha_3 \beta_2 + \alpha_1 \alpha_2 \alpha_3 \beta_3 - \alpha_1 \alpha_2 \alpha_3 - \alpha_1 \alpha_2 \beta_2 \beta_3 - \alpha_1 \alpha_3 \beta_2 \beta_3\\
& + \alpha_1 \beta_2 \beta_3 - \alpha_2^2 \alpha_3 \beta_2 + \alpha_2^2 \beta_2 \beta_3 + \alpha_2 \alpha_3 \beta_2 - \alpha_2 \alpha_3 \beta_3^2 + \alpha_2 \alpha_3 \beta_3\\
& - \alpha_2 \beta_2 \beta_3 + \alpha_3 \beta_2 \beta_3^2 - \alpha_3 \beta_2 \beta_3,
\end{aligned}
\]
\[
\begin{aligned}
\eta_2=& \alpha_1 \alpha_2 \alpha_3 \beta_2 + \alpha_1 \alpha_2 \alpha_3 \beta_3 - \alpha_1 \alpha_2 \alpha_3 - \alpha_1 \alpha_2 \beta_2 \beta_3 - \alpha_1 \alpha_3 \beta_2 \beta_3\\
&+ \alpha_1 \beta_2 \beta_3 - \alpha_2^2 \alpha_3 \beta_2 + \alpha_2^2 \alpha_3 + \alpha_2^2 \beta_2 \beta_3 - \alpha_2^2 \beta_3 - \alpha_2 \alpha_3 \beta_3^2\\
& + \alpha_2 \beta_3^2 + \alpha_3 \beta_2 \beta_3^2 - \beta_2 \beta_3^2,
\end{aligned}
\]
\[
\begin{aligned}
\eta_3 =&\alpha_1 \alpha_2^2 \alpha_3 - \alpha_1 \alpha_2^2 \beta_3 - \alpha_1 \alpha_2 \alpha_3 + \alpha_1 \alpha_2 \beta_3^2 - \alpha_1 \beta_2 \beta_3^2 + \alpha_1 \beta_2 \beta_3\\
& - \alpha_2^2 \alpha_3 \beta_2 + \alpha_2^2 \beta_2 \beta_3 + \alpha_2 \alpha_3 \beta_2 - \alpha_2 \alpha_3 \beta_3^2 + \alpha_2 \alpha_3 \beta_3 - \alpha_2 \beta_2 \beta_3\\
& + \alpha_3 \beta_2 \beta_3^2 - \alpha_3 \beta_2 \beta_3
\end{aligned}
\]
\[
\begin{aligned}
\eta_2' = & \alpha_1 \alpha_2 \alpha_3 \beta_2 + \alpha_1 \alpha_2 \alpha_3 \beta_3 - \alpha_1 \alpha_2 \alpha_3 - \alpha_1 \alpha_2 \beta_2 \beta_3 - \alpha_1 \alpha_3 \beta_2 \beta_3 \\
& + \alpha_1 \beta_2 \beta_3 - \alpha_2^2 \alpha_3 \beta_2 + \alpha_2^2 \alpha_3 \beta_3 + \alpha_2^2 \beta_2 \beta_3 - \alpha_2^2 \beta_3^2 - \alpha_2 \alpha_3 \beta_2 \beta_3\\
& + \alpha_2 \alpha_3 \beta_2 - 2 \alpha_2 \alpha_3 \beta_3^2 + \alpha_2 \alpha_3 \beta_3 + \alpha_2 \beta_2 \beta_3^2 - \alpha_2 \beta_2 \beta_3 + \alpha_2 \beta_3^3 \\
& + 2 \alpha_3 \beta_2 \beta_3^2 - \alpha_3 \beta_2 \beta_3 - \beta_2 \beta_3^3
\end{aligned}
\]
with $\eta_3' =\alpha_2 - \beta_3$, whence
\[
    \begin{cases}
        \eta_1 - \eta_1' = -\beta_2 (\alpha_3 - \beta_3) (\alpha_2 - \beta_3)(\alpha_2 - \beta_2),\\
        \eta_1 -\eta_2' = -\beta_3 (\alpha_3 - \beta_3) (\alpha_2 - \beta_3)(\alpha_2 - \beta_2),\\
        \eta_2 - \eta_1' = -(\beta_2-1) (\alpha_3 - \beta_3) (\alpha_2 - \beta_3)(\alpha_2-\beta_2),\\
        \eta_2 - \eta_2' = -(\beta_3-1) (\alpha_3 - \beta_3) (\alpha_2 - \beta_3)(\alpha_2-\beta_2),\\
        \eta_3 - \eta_1' = (\alpha_1 - \beta_2)(\alpha_3 - \beta_3)(\alpha_2 - \beta_3) (\alpha_2 - \beta_2),\\
        \eta_3 - \eta_2' = (\alpha_1 - \beta_3)(\alpha_3 - \beta_3) (\alpha_2 - \beta_3)(\alpha_2-\beta_2).
    \end{cases}
\]

\subsection{Case (III)}

In this case, we have
\[
\begin{aligned}
\eta_1=&\alpha_1 \alpha_2 \alpha_3 \beta_2 + \alpha_1 \alpha_2 \alpha_3 \beta_3 - \alpha_1 \alpha_2 \alpha_3 - \alpha_1 \alpha_2 \beta_2 \beta_3 - \alpha_1 \alpha_3 \beta_2 \beta_3\\
& + \alpha_1 \beta_2 \beta_3 - \alpha_2 \alpha_3^2 \beta_3 - \alpha_2 \alpha_3 \beta_2^2 + \alpha_2 \alpha_3 \beta_2 + \alpha_2 \alpha_3 \beta_3\\
& + \alpha_2 \beta_2^2 \beta_3 -
\alpha_2 \beta_2 \beta_3  + \alpha_3^2 \beta_2 \beta_3 - \alpha_3 \beta_2 \beta_3,
\end{aligned}
\]
\[
\begin{aligned}
\eta_2=&\alpha_1 \alpha_2 \alpha_3 \beta_2 + \alpha_1 \alpha_2 \alpha_3 \beta_3 - \alpha_1 \alpha_2 \alpha_3 - \alpha_1 \alpha_2 \beta_2 \beta_3 - \alpha_1 \alpha_3 \beta_2 \beta_3\\
& + \alpha_1 \beta_2 \beta_3  - \alpha_2 \alpha_3^2 \beta_3 + \alpha_2 \alpha_3^2 - \alpha_2 \alpha_3 \beta_2^2 + \alpha_2 \beta_2^2 \beta_3 + \alpha_3^2 \beta_2 \beta_3\\
&  - \alpha_3^2 \beta_2 + \alpha_3 \beta_2^2 - \beta_2^2 \beta_3,
\end{aligned}
\]
\[
\begin{aligned}
\eta_3=&\alpha_1 \alpha_2 \alpha_3^2 - \alpha_1 \alpha_2 \alpha_3 - \alpha_1 \alpha_3^2 \beta_2 + \alpha_1 \alpha_3 \beta_2^2 - \alpha_1 \beta_2^2 \beta_3 + \alpha_1 \beta_2 \beta_3\\
& - \alpha_2 \alpha_3^2 \beta_3 - \alpha_2 \alpha_3 \beta_2^2 + \alpha_2 \alpha_3 \beta_2 + \alpha_2 \alpha_3 \beta_3 + \alpha_2 \beta_2^2 \beta_3\\
& - \alpha_2 \beta_2 \beta_3 + \alpha_3^2 \beta_2 \beta_3 - \alpha_3 \beta_2 \beta_3
\end{aligned}
\]
\[
\begin{aligned}
\eta_2':=&\alpha_1 \alpha_2 \alpha_3 \beta_2 + \alpha_1 \alpha_2 \alpha_3 \beta_3 - \alpha_1 \alpha_2 \alpha_3 - \alpha_1 \alpha_2 \beta_2 \beta_3 - \alpha_1 \alpha_3 \beta_2 \beta_3 \\
& + \alpha_1 \beta_2 \beta_3 + \alpha_2 \alpha_3^2 \beta_2 - \alpha_2 \alpha_3^2 \beta_3 - 2 \alpha_2 \alpha_3 \beta_2^2 - \alpha_2 \alpha_3 \beta_2 \beta_3\\
& + \alpha_2 \alpha_3 \beta_2 + \alpha_2 \alpha_3 \beta_3   + 2 \alpha_2 \beta_2^2 \beta_3 - \alpha_2 \beta_2 \beta_3 - \alpha_3^2 \beta_2^2 \\
& + \alpha_3^2 \beta_2 \beta_3 + \alpha_3 \beta_2^3 + \alpha_3 \beta_2^2 \beta_3 - \alpha_3 \beta_2 \beta_3 - \beta_2^3 \beta_3
\end{aligned}
\]
with $\eta_3' =\alpha_3 - \beta_2$, whence
\[
\begin{cases}
    \eta_1 - \eta_1' = -\beta_3 (\alpha_3 - \beta_3) (\alpha_3 - \beta_2)(\alpha_2 - \beta_2),\\
    \eta_1 -\eta_2' = -\beta_2 (\alpha_3 - \beta_3) (\alpha_3 - \beta_2)(\alpha_2 - \beta_2),\\
    \eta_2 - \eta_1' = -(\beta_3-1) (\alpha_3 - \beta_3) (\alpha_3 - \beta_2)(\alpha_2-\beta_2),\\
    \eta_2 - \eta_2' = -(\beta_2-1) (\alpha_3 - \beta_3) (\alpha_3 - \beta_2)(\alpha_2-\beta_2),\\
    \eta_3 - \eta_1' = (\alpha_1 - \beta_3)(\alpha_3 - \beta_3)(\alpha_3 - \beta_2) (\alpha_2 - \beta_2),\\
    \eta_3 - \eta_2' = (\alpha_1 - \beta_2)(\alpha_3 - \beta_3) (\alpha_3 - \beta_2)(\alpha_2-\beta_2).
\end{cases}
\]

\subsection{Case (IV)}

In this case, we have
\[
\begin{aligned}
    \eta_1 = & \; \alpha_1  \alpha_2  \alpha_3 \beta_2 +  \alpha_1 \alpha_2 \alpha_3 \beta_3 - \alpha_1 \alpha_2 \alpha_3 - \alpha_1 \alpha_2 \beta_2 \beta_3 - \alpha_1 \alpha_3 \beta_2 \beta_3\\
    & + \alpha_1 \beta_2 \beta_3 - \alpha_2 \alpha_3^2 \beta_2 + \alpha_2 \alpha_3 \beta_2 - \alpha_2 \alpha_3 \beta_3^2 + \alpha_2 \alpha_3 \beta_3 + \alpha_2 \beta_2 \beta_3^2\\
    & - \alpha_2 \beta_2 \beta_3 + \alpha_3^2 \beta_2 \beta_3 - \alpha_3 \beta_2 \beta_3,
\end{aligned}
\]
\[
\begin{aligned}
    \eta_2 =& \; \alpha_1 \alpha_2 \alpha_3 \beta_2 + \alpha_1 \alpha_2 \alpha_3 \beta_3 - \alpha_1 \alpha_2 \alpha_3 - \alpha_1 \alpha_2 \beta_2 \beta_3 - \alpha_1 \alpha_3 \beta_2 \beta_3\\
    & + \alpha_1 \beta_2 \beta_3 - \alpha_2 \alpha_3^2 \beta_2 + \alpha_2 \alpha_3^2 - \alpha_2 \alpha_3 \beta_3^2 + \alpha_2 \beta_2 \beta_3^2 + \alpha_3^2 \beta_2 \beta_3\\
    & - \alpha_3^2 \beta_3 + \alpha_3 \beta_3^2 - \beta_2 \beta_3^2,
\end{aligned}
\]
\[
\begin{aligned}
    \eta_3 =& \; \alpha_1 \alpha_2 \alpha_3^2 - \alpha_1 \alpha_2 \alpha_3 - \alpha_1 \alpha_3^2 \beta_3 + \alpha_1 \alpha_3 \beta_3^2  - \alpha_1 \beta_2 \beta_3^2 + \alpha_1 \beta_2 \beta_3\\
    & - \alpha_2 \alpha_3^2 \beta_2 + \alpha_2 \alpha_3 \beta_2 - \alpha_2 \alpha_3 \beta_3^2 + \alpha_2 \alpha_3 \beta_3 + \alpha_2 \beta_2 \beta_3^2 - \alpha_2 \beta_2 \beta_3\\
    & + \alpha_3^2 \beta_2 \beta_3 - \alpha_3 \beta_2 \beta_3
    \end{aligned}
    \]
    \[
    \begin{aligned}
    \eta_2' =&\; \alpha_1 \alpha_2 \alpha_3 \beta_2 + \alpha_1 \alpha_2 \alpha_3 \beta_3 - \alpha_1 \alpha_2 \alpha_3 - \alpha_1 \alpha_2 \beta_2 \beta_3 - \alpha_1 \alpha_3 \beta_2 \beta_3\\
    & + \alpha_1 \beta_2 \beta_3 - \alpha_2 \alpha_3^2 \beta_2 + \alpha_2 \alpha_3^2 \beta_3 - \alpha_2 \alpha_3 \beta_2 \beta_3 + \alpha_2 \alpha_3 \beta_2 -2 \alpha_2 \alpha_3 \beta_3^2\\
    & + \alpha_2 \alpha_3 \beta_3  + 2 \alpha_2 \beta_2 \beta_3^2- \alpha_2 \beta_2 \beta_3 + \alpha_3^2 \beta_2 \beta_3 - \alpha_3^2 \beta_3^2 + \alpha_3 \beta_2 \beta_3^2\\
    & - \alpha_3 \beta_2 \beta_3 + \alpha_3 \beta_3^3 - \beta_2 \beta_3^3,
\end{aligned}
\]
with $\eta_3' =\alpha_3 - \beta_3$, whence
\[
\begin{cases}
    \eta_1 - \eta_1' = - \beta_2 (\alpha_3 - \beta_3) (\alpha_3 - \beta_2)(\alpha_2 - \beta_3),\\
    \eta_1 -\eta_2' = - \beta_3 (\alpha_3 - \beta_3) (\alpha_3 - \beta_2)(\alpha_2 - \beta_3),\\
    \eta_2 - \eta_1' = -(\beta_2-1) (\alpha_3 - \beta_3) (\alpha_3 - \beta_2)(\alpha_2-\beta_3),\\
    \eta_2 - \eta_2' = -(\beta_3-1) (\alpha_3 - \beta_3) (\alpha_3 - \beta_2)(\alpha_2-\beta_3),\\
    \eta_3 - \eta_1' = (\alpha_1 - \beta_2)(\alpha_3 - \beta_3) (\alpha_3 - \beta_2)(\alpha_2-\beta_3),\\
    \eta_3 - \eta_2' = (\alpha_1 - \beta_3)(\alpha_3 - \beta_3) (\alpha_3 - \beta_2)(\alpha_2-\beta_3).
\end{cases}
\]

\end{document}